\documentclass[11pt,reqno]{amsart}
\usepackage{times}
\usepackage{amsmath,amsfonts,amstext,amssymb,amsbsy,amsopn,amsthm,eucal}
\usepackage{txfonts}
\usepackage{dsfont}
\usepackage{graphicx}   
\usepackage{hyperref}
\usepackage{accents}
\usepackage{enumerate}
\usepackage{a4wide,xcolor,mathrsfs}
\usepackage[normalem]{ulem}

\usepackage{epsfig,amsthm}
\usepackage[latin1]{inputenc}


\setlength{\textheight}{8.50in} \setlength{\textwidth}{6.5in}
\setlength{\columnsep}{0.5in} \setlength{\topmargin}{0.0in}
\setlength{\headheight}{0in} \setlength{\headsep}{0.5in}
\setlength{\parindent}{1pc}
\setlength{\oddsidemargin}{0in}  
\setlength{\evensidemargin}{0in}

\newcommand{\MM}{\mathscr{M}}

\newcommand{\supp}{\mathop{\rm supp}\nolimits}

\newcommand{\Lip}{\textrm{Lip}}

\newcommand{\diam}{\text{diam}}

\newcommand{\Ch}{{\rm Ch}}

\newcommand{\dom}{\rm D}

\newcommand{\mm}{\mathfrak m}
\newcommand{\nn}{\mathfrak n}
\newcommand{\sfd}{{\sf d}}
\renewcommand{\H}{{\rm H}}
\newcommand{\LL}{\mathscr{L}}
\renewcommand{\d}{{\rm{d}}}
\newcommand{\R}{\mathbb{R}}
\newcommand{\N}{\mathbb{N}}
\newcommand{\Tan}{{\rm Tan}}

\newcommand{\Prob}{\mathscr P}

\newcommand{\geo}{{\rm{Geo}}}                       
\newcommand{\e}{{\rm{e}}}                           
\newcommand{\gopt}{{\rm{OptGeo}}}                   

\newcommand{\ppi}{{\mbox{\boldmath$\pi$}}}
\newcommand{\ggamma}{{\mbox{\boldmath$\gamma$}}}
\newcommand{\sggamma}{{\mbox{\scriptsize\boldmath$\gamma$}}}

\newcommand{\CD}{{\sf CD}}
\newcommand{\RCD}{{\sf RCD}}

\newcommand{\DC}{\mathcal D_{C(\cdot)}}

\newcommand{\vare}{\varepsilon}

\newtheorem{theorem}{Theorem}[section]

\newtheorem{proposition}[theorem]{Proposition}
\newtheorem{lemma}[theorem]{Lemma}
\newtheorem{corollary}[theorem]{Corollary}
\theoremstyle{definition}
\newtheorem{definition}[theorem]{Definition}
\theoremstyle{remark}
\newtheorem{remark}{Remark}[section]
\theoremstyle{remark}

\theoremstyle{remark}

\theoremstyle{remark}

\theoremstyle{remark}

\theoremstyle{remark}

\theoremstyle{remark}


\hyphenation{stra-te-gy pro-perties Rie-man-nian ma-xi-mal ge-ne-ra-li-ty par-ti-cu-lar esta-blish}

\begin{document}

\title{Structure Theory of Metric-Measure Spaces with Lower Ricci Curvature Bounds}

\author{Andrea Mondino}\thanks{A. Mondino: University of Warwick, Department of Mathematics. email: A.Mondino@warwick.ac.uk} 
\author{ Aaron Naber} \thanks{A. Naber:  Northwestern University, Department of Mathematics. email: anaber@math.northwestern.edu}

\date{\today}
\maketitle

\begin{abstract} We prove that a metric measure space $(X,\sfd,\mm)$ satisfying finite dimensional lower Ricci curvature bounds and whose Sobolev space $W^{1,2}$ is Hilbert is \emph{rectifiable}.  That is, a $\RCD^*(K,N)$-space is rectifiable, and in particular for $\mm$-a.e. point the  tangent cone is unique and euclidean of dimension at most $N$.  The proof is based on a maximal function argument combined with an original Almost Splitting Theorem via estimates on the gradient of the excess.   To this aim we also show a sharp integral Abresh-Gromoll type inequality on the excess function and an Abresh-Gromoll-type inequality on the gradient of the excess.  The argument is new even in the smooth setting. 

\end{abstract}

\tableofcontents

\section{Introduction}\label{s:introduction}

There is at this stage  a well developed structure theory for Gromov-Hausdorff limits of smooth Riemannian manifolds with lower Ricci curvature bounds, see for instance the work of Cheeger-Colding \cite{CC96, CC97,CC00a, CC00b} and  more recently \cite{CN} by Colding and the second author.  

On the other hand, in the last ten years, there has been a surge of activity on general metric measure spaces $(X,\sfd,\mm)$ satisfying a lower Ricci curvature bound in some generalized sense. This investigation began with the seminal papers of Lott-Villani \cite{Lott-Villani09}  and Sturm \cite{Sturm06I, Sturm06II}, though has been adapted considerably since the work of Bacher-Sturm \cite{BS2010} and Ambrosio-Gigli-Savar\'e \cite{AGS11a, AGS11b}.  The crucial property of any such definition is the compatibility with the smooth Riemannian case and the stability with respect to  measured Gromov-Hausdorff convergence.
While a great deal of progress has been made in this latter general framework, see for instance  \cite{AGMR2012, AGS11a, AGS11b, AGS12, AMS2013, AMSLocToGlob, BS2010,Cavalletti12, CS12,EKS2013, GaMo,Gigli12,GigliSplitting,GMS2013,GMR2013,GiMo12,R2011,R2013,Savare13,Villani09}, the structure theory on such metric-measure spaces is still much less developed than in the case of smooth limits.

The notion of  lower Ricci curvature bound on a general metric-measure space comes with two subtleties. The first is that of \emph{dimension}, and has been well understood since the work of Bakry-Emery \cite{BakryEmery_diffusions}:  in both the geometry and analysis of spaces with lower Ricci curvature bounds, it has become clear the correct statement is not that ``$X$ has Ricci curvature bounded from below by $K$'', but that ``$X$ has $N$-dimensional Ricci curvature bounded from below by $K$''. Such spaces are said to satisfy the $(K,N)$-\emph{Curvature Dimension} condition, $\CD(K,N)$ for short; a variant of this is that of \emph{reduced} curvature dimension bound, $\CD^*(K,N)$.  See \cite{BS2010, BakryEmery_diffusions, Sturm06II} and Section \ref{Sec:Prel} for more on this.

The second subtle point, which is particularly relevant for this paper, is that the classical definition of a metric-measure space with lower Ricci curvature bounds allows for Finsler structures (see the last theorem in \cite{Villani09}), which after the aforementioned works of Cheeger-Colding are known not to appear as limits of smooth manifolds with Ricci curvature lower bounds.  To address this issue, Ambrosio-Gigli-Savar\'e \cite{AGS11b} introduced a more restrictive condition which rules out Finsler geometries while retaining the stability properties under measured Gromov-Hausdorff convergence, see also \cite{AGMR2012} for the present simplified axiomatization.  In short, one studies the Sobolev space $W^{1,2}(X)$ of functions on $X$.  This space is always a Banach space, and the imposed extra condition  is that $W^{1,2}(X)$ is a Hilbert space.  Equivalently, the Laplace operator on $X$ is linear.  The notion of a lower Ricci curvature bound compatible with this last Hilbertian condition is called  \emph{Riemannian Curvature Dimension} bound, $\RCD$ for short.  Refinements of this have led to the notion of $\RCD^*(K,N)$-spaces, which is the key object of study in this paper.  See Section \ref{Sec:Prel} for a precise definition.

Remarkably, as proved by Erbar-Kuwada-Sturm \cite{EKS2013} and by Ambrosio-Savar\'e and the first author \cite{AMS2013}, the $\RCD^*(K,N)$ condition is equivalent to the dimensional Bochner inequality of Bakry-Emery \cite{BakryEmery_diffusions}.  There are various important consequences of this, and in particular the classicial Li-Yau and Harnack type estimates on the heat flow \cite{LY86}, known for Riemannian manifolds with lower Ricci bounds,  hold for $\RCD^*(K,N)$-spaces as well, see \cite{GaMo}.

More recently, an important contribution by Gigli \cite{GigliSplitting} has been to show that on $\RCD^*(0,N)$-spaces the analogue of the Cheeger-Gromoll Splitting Theorem \cite{ChGr} holds, thus providing a geometric property which fails on general $\CD(K,N)/\CD^*(K,N)$-spaces.  This was pushed by Gigli-Rajala and the first author in \cite{GMR2013} to prove that $\mm$-a.e. point in an $\RCD^*(K,N)$-space has a euclidean tangent cone; the possibility of having non unique tangent cones on a set of positive measure was conjectured to be false, but not excluded.
\\

In the present work we proceed in the investigation of the geometric properties of $\RCD^*(K,N)$-spaces by establishing their rectifiability, and consequently the $\mm$-a.e. uniqueness of tangent cones. More precisely the main result of the paper is the following:

\begin{theorem}[Rectifiability of $\RCD^*(K,N)$-spaces]\label{thm:rect}
Let $(X,\sfd, \mm)$ be an $\RCD^*(K,N)$-space, for some $K,N\in \R$ with $N> 1$. Then there exists a countable collection $\{R_j\}_{j \in \N}$ of $\mm$-measurable subsets of $X$, covering $X$ up to an $\mm$-negligible set, such that  each $R_j$ is biLipschitz to a measurable subset of $\R^{k_j}$, for some $1\leq k_j \leq N$, $k_j$ possibly depending on $j$.  
\end{theorem}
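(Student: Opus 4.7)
The plan is to follow the strategy pioneered by Cheeger--Colding in the smooth Ricci-limit setting, but adapted to the $\RCD^*(K,N)$ framework via the new excess-gradient estimates and almost splitting theorem advertised in the abstract. For each integer $1 \le k \le N$, define the $k$-\emph{regular set} $\mathcal R_k$ to be the collection of points $x \in X$ at which some tangent cone is isometric to $(\R^k,\sfd_{\rm eucl},\mathcal L^k)$. By the result of Gigli--Mondino--Rajala \cite{GMR2013}, $\mm$-almost every point in $X$ lies in some $\mathcal R_k$, so it suffices to cover each $\mathcal R_k$, up to $\mm$-null sets, by countably many biLipschitz images of subsets of $\R^k$. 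I will therefore fix $k$ and explain the construction on $\mathcal R_k$.

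At a point $x \in \mathcal R_k$ where the tangent is $\R^k$, a blow-up argument together with the almost-Euclidean nature of small balls lets me choose, for every small enough scale $r$ and every $\varepsilon>0$, pairs of ``antipodal'' reference points $p_i^\pm$ ($i=1,\ldots,k$) with $\sfd(x,p_i^\pm) \approx R$ (for $R=r/\varepsilon$, say) such that the excess functions $e_i(y) = \sfd(y,p_i^+) + \sfd(y,p_i^-) - \sfd(p_i^+,p_i^-)$ are small at $x$ and the directions towards the $p_i^\pm$ are $\varepsilon$-orthonormal in the tangent. Set $b_i(y) := \tfrac{1}{2}\bigl(\sfd(y,p_i^+)-\sfd(y,p_i^-)\bigr)$; each $b_i$ is $1$-Lipschitz, and the map I will use is
\[
u := (b_1,\ldots,b_k) : X \to \R^k.
\]
The $k$-dimensional Bishop--Gromov inequality, the Laplacian comparison available on $\RCD^*(K,N)$-spaces, and the sharp integral Abresch--Gromoll inequality on $e_i$ together with the new gradient version will give, on a ball $B_r(x)$,
\[
\fint_{B_r(x)} |\nabla e_i|^2\, \d\mm \;\le\; \Psi(\varepsilon\mid K,N,R),
\qquad
\fint_{B_r(x)} \bigl|\,\langle \nabla b_i,\nabla b_j\rangle - \delta_{ij}\,\bigr|\, \d\mm \;\le\; \Psi(\varepsilon),
\]
where $\Psi \to 0$ as $\varepsilon \to 0$. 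The second inequality follows from the first, since $|\nabla b_i|\le 1$ with equality controlled by $|\nabla e_i|$, and the almost orthogonality between the $\nabla b_i$'s is encoded exactly in the almost splitting.

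The next step is a maximal function argument. Define the bad scale set
\[
G_{\varepsilon}(x) := \Bigl\{ r \in (0,1) \, : \, \text{the estimates above fail at scale $r$ around $x$}\Bigr\},
\]
and, restricting $x$ to a compact subset of $\mathcal R_k$ of almost full measure, apply a Vitali--type maximal function inequality to conclude that on a further subset $U_\varepsilon \subset \mathcal R_k$ of arbitrarily large $\mm$-measure, one can choose uniform $p_i^\pm$ such that the excess and excess-gradient estimates hold at \emph{every} scale $r \in (0,r_0(\varepsilon))$ simultaneously. On such a set the map $u$ is clearly $\sqrt{k}$-Lipschitz; the content of the theorem is the lower biLipschitz bound $|u(x)-u(y)| \ge c\, \sfd(x,y)$ for $x,y \in U_\varepsilon$ with $\sfd(x,y)\le r_0$. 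This I would obtain by combining three ingredients: (i) absolute continuity of $b_i\circ\gamma$ along $W_2$-geodesics on $\RCD^*(K,N)$-spaces (from the Sobolev calculus of Ambrosio--Gigli--Savar\'e and Gigli); (ii) the identity $b_i(y)-b_i(x) = \int_0^1 \langle \nabla b_i, \dot\gamma_s\rangle\, \d s$ for a.e.\ geodesic $\gamma$, upgraded to hold for a geodesic connecting $x$ and $y$ that stays inside a ball where the uniform estimates apply; (iii) the almost-orthonormality of $\{\nabla b_i\}$ from the gradient-of-excess estimate, which via a Pythagorean-type computation forces $\sum_i (b_i(y)-b_i(x))^2 \ge (1-\Psi(\varepsilon))\,\sfd(x,y)^2$.

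The main obstacle is step (iii): making the almost-orthonormality of the $\nabla b_i$'s \emph{pointwise effective along a geodesic} starting from an integral control on $B_r$. In the smooth setting one uses segment-type inequalities along minimizing geodesics; here the substitute must come from the $\RCD^*(K,N)$ segment inequality, good cut-locus estimates and the regularity theory of the heat flow, combined with Bochner's identity in the Bakry--\'Emery form (which is available by Erbar--Kuwada--Sturm and Ambrosio--Mondino--Savar\'e). Once this lower bound is established, $u$ is biLipschitz on $U_\varepsilon$ and, letting $\varepsilon \to 0$ along a sequence and taking a countable exhaustion of $\mathcal R_k$ by such sets $U_\varepsilon$, the desired rectifiable decomposition $\{R_j\}$ of $X$ follows. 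The $\mm$-a.e.\ uniqueness and Euclidean character of the tangent cone is then a direct consequence of rectifiability together with \cite{GMR2013}.
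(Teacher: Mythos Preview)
Your overall architecture---excess estimates, maximal function to get uniformity at all scales, then a chart map built from distance functions---matches the paper. But step (iii), which you correctly flag as the crux, contains a real gap, and the paper's repair is not the segment-inequality route you suggest.

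The problem with your Pythagorean argument is this: almost-orthonormality of $\{\nabla b_i\}_{i=1}^k$ says nothing about the component of $\dot\gamma$ \emph{transverse} to $\mathrm{span}\{\nabla b_i\}$. If the space near $x$ looks like $\R^k\times Y$ with $\diam Y$ not small, a geodesic can move mostly in the $Y$-direction, making $\sum_i\langle\nabla b_i,\dot\gamma\rangle^2$ tiny while $|\dot\gamma|=1$. Then $\sum_i(b_i(y)-b_i(x))^2\ll \sfd(x,y)^2$ and the lower bound fails. No amount of segment-inequality averaging fixes this, because the issue is geometric (a genuine extra factor), not analytic (bad points along $\gamma$). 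Working on the set $\mathcal R_k=\{x:\R^k\in\Tan(X,x)\}$ does not exclude this: a point with $\R^k$ as one tangent can have other tangents that split $\R^{k+1}$, and at intermediate scales the ball can be close to $\R^k\times Y$ with $Y$ large.

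The paper resolves this with two changes. First, it replaces $\mathcal R_k$ by the finer stratum $A_k=\{x:\R^k\in\Tan(X,x)\text{ but no tangent splits }\R^{k+1}\}$ and proves (via iterated tangents) that $\bigcup_k A_k$ still has full measure. Second, it does not try to get the lower biLipschitz bound by integrating along geodesics at all. Instead, the excess-gradient smallness at scale $r$ is fed into the Almost Splitting Theorem via excess (Theorem~5.1) to conclude that $(B_r(x),r^{-1}\sfd)$ is GH-close to $\R^k\times Y$, with the map $u=(\sfd(p_1,\cdot),\ldots,\sfd(p_k,\cdot))$ realizing the $\R^k$ factor. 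The $A_k$ condition then forces $\diam Y\le\vare$ (Lemma~6.3), so $u$ is an $\vare r$-GH quasi-isometry from $B_r(x)$ onto $B_r(0^k)$. Since this holds at \emph{every} scale $r\le r_0$ for $x\in U_\vare$, one reads off directly that $\big||u(x_1)-u(x_2)|-\sfd(x_1,x_2)\big|\le \vare\,\sfd(x_1,x_2)$ by taking $r=\sfd(x_1,x_2)$. This is the biLipschitz bound, obtained without ever looking at $\nabla b_i$ along a geodesic.
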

Actually, as we are going to describe below, we prove the following stronger rectifiability property: there exists $\bar{\vare}=\bar{\vare}(K,N)$ such that, if $(X,\sfd,\mm)$ is an $\RCD^*(K,N)$-space then for every $\vare\in (0, \bar \vare]$ there exists a countable collection $\{R^\vare_j\}_{j \in \N}$ of $\mm$-measurable subsets of $X$, covering $X$ up to an $\mm$-negligible set, such that  each $R_j^\vare$ is $(1+\vare)$-biLipschitz to a measurable subset of $\R^{k_j}$, for some $1\leq k_j \leq N$, $k_j$ possibly depending on $j$.

\begin{remark}
It will be a consequence of the proof that if $(X,\sfd, \mm)$ is a $\CD^*(K,N)$-space, then $X$ is $1+\vare$ rectifiable in the above sense for every $\vare \in (0,\bar{\vare}]$ if and only if $X$ is an $\RCD^*(K,N)$-space.
\end{remark}

From the constructions used to prove Theorem \ref{thm:rect} the $\mm$-a.e. uniqueness of the tangent cones follows  readily  (for the proof see  Section  \ref{SS:UniqTC}):

\begin{corollary}[$\mm$-a.e. uniqueness of tangent cones]\label{thm:UniqTC}
Let $(X,\sfd, \mm)$ be an $\RCD^*(K,N)$-space, for some $K,N\in \R$ with $N> 1$.  Then for $\mm$-a.e. $x \in X$ the tangent cone of $X$ at $x$   is unique and isometric to the $k_x$-dimensional euclidean space, for some $k_x \in \N$ with $1\leq k_x \leq N$. 
\end{corollary}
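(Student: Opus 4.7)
The plan is to deduce the corollary directly from the stronger rectifiability statement recorded just before it: for every $\vare\in(0,\bar\vare]$ there is an $\mm$-null set $E_\vare$ and a partition $X\setminus E_\vare=\bigsqcup_j R_j^\vare$ such that each $R_j^\vare$ is $(1+\vare)$-biLipschitz to a measurable subset of $\R^{k_j^\vare}$, with $1\le k_j^\vare\le N$. Combining this as $\vare\to 0$ with a Lebesgue density argument and Arzel\`a--Ascoli at the level of blow-ups forces every tangent cone at $\mm$-a.e. point to be isometric to a single euclidean space.

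First, I fix a sequence $\vare_n\downarrow 0$, set $E:=\bigcup_n E_{\vare_n}$ (still $\mm$-negligible), and for $x\notin E$ and $n\in\N$ let $j_n(x)$ denote the unique index with $x\in R_{j_n(x)}^{\vare_n}$ and set $k_n(x):=k_{j_n(x)}^{\vare_n}$. Since $\RCD^*(K,N)$-spaces are locally uniformly doubling, Lebesgue differentiation applies, and for $\mm$-a.e. $x$ the point $x$ is a density point of $R_{j_n(x)}^{\vare_n}$ for all $n$ simultaneously. Doubling upgrades this to metric density: for any $\rho\in(0,1)$, every sub-ball of radius $\rho r$ inside $B_r(x)$ with $r$ small meets $R_{j_n(x)}^{\vare_n}$ (otherwise the complement would carry a definite fraction of the total measure, contradicting density $1$).

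Now I fix such an $x$ and let $(Y,d_Y,y_0)$ be any pointed Gromov--Hausdorff tangent cone of $X$ at $x$, obtained as a limit of $(X,r_i^{-1}\sfd,x)$ with $r_i\downarrow 0$. Metric density of $R_{j_n(x)}^{\vare_n}$ at $x$ ensures that the rescalings of $R_{j_n(x)}^{\vare_n}$ and of $X$ share the pGH limit $Y$. Pulling back the $(1+\vare_n)$-biLipschitz chart $\varphi_n\colon R_{j_n(x)}^{\vare_n}\to\R^{k_n(x)}$ along the rescaling sequence and invoking Arzel\`a--Ascoli, after normalizing basepoints, produces a $(1+\vare_n)$-biLipschitz map $\Phi_n\colon Y\to\R^{k_n(x)}$; surjectivity holds because $\varphi_n(x)$ is a Lebesgue density point of $\varphi_n(R_{j_n(x)}^{\vare_n})$, so its blow-up is all of $\R^{k_n(x)}$. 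Since topological dimension is a biLipschitz invariant, all $k_n(x)$ coincide with a single integer $k_x\in\{1,\ldots,\lfloor N\rfloor\}$.

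Finally, I consider the inverses $\Phi_n^{-1}\colon(\R^{k_x},0)\to(Y,y_0)$: each is $(1+\vare_n)$-biLipschitz and basepoint-preserving, and $Y$ is proper since it is an $\RCD^*(0,N)$-space. A standard Arzel\`a--Ascoli extraction yields a subsequential uniform-on-compacta limit $\Phi_\infty\colon(\R^{k_x},0)\to(Y,y_0)$; passing the two-sided distortion inequalities to the limit makes $\Phi_\infty$ an isometric embedding, and surjectivity is recovered by letting $n\to\infty$ in the identities $\Phi_n^{-1}(\Phi_n(y))=y$ for any $y\in Y$. Hence every tangent cone at $x$ is isometric to $(\R^{k_x},|\cdot|,0)$, yielding both the uniqueness and the euclidean structure. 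The main technical point is the biLipschitz transfer of the third step: metric density (from doubling) is precisely what prevents extra points from appearing in the pGH limit, and the uniform Lipschitz bounds are what permit the two-sided distortion estimates to pass to the ambient tangent cone $Y$.
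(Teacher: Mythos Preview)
Your argument has a genuine gap at the surjectivity step.  You assert that $\varphi_n(x)$ is a Lebesgue density point of $\varphi_n(R_{j_n(x)}^{\vare_n})$ in $\R^{k_n(x)}$, but this is not justified.  What you do know is that $x$ is an $\mm$-density point of $R:=R_{j_n(x)}^{\vare_n}$ inside $X$; transferring this to Lebesgue density of $\varphi_n(R)$ inside $\R^{k_n(x)}$ would require that $(\varphi_n)_\sharp(\mm\llcorner R)\ll \LL_{k_n(x)}$, i.e.\ that $\mm$ is absolutely continuous with respect to the relevant Hausdorff measure on the chart.  This absolute continuity is \emph{not} proved in the paper (it was established only in subsequent work), and a biLipschitz map alone does not carry $\mm$-density to Lebesgue density.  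Without it, the blow-up of $\varphi_n(R)$ at $\varphi_n(x)$ may be a proper closed subset of $\R^{k_n(x)}$, so $\Phi_n:Y\to\R^{k_n(x)}$ need not be onto.  Once surjectivity fails, your dimension identification ``all $k_n(x)$ agree'' collapses (you only get $\dim Y\le k_n(x)$), and the inverse maps $\Phi_n^{-1}$ are not globally defined on $\R^{k_x}$, so the final Arzel\`a--Ascoli step cannot be run.

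The paper does not deduce the corollary from the bare biLipschitz rectifiability statement; it uses a strictly finer output of the construction.  In the proof of Theorem~\ref{thm:rectUked} the sets $U^k_{\vare_1,\delta_1}$ come with the property \eqref{eq:DXRk}: for \emph{every} point $x\in U^k_{\vare_1,\delta_1}$ and \emph{every} scale $0<r\le 2\delta_1$ the rescaling $(X,r^{-1}\tilde\sfd,\tilde\mm^x_r,x)$ is $\vare_3$-close in $\DC$ to $\R^k$, via the explicit distance map $u$.  This scale-uniform closeness---produced by the maximal function bound on the excess gradients together with the Almost Splitting via Excess theorem---directly forces every tangent cone at such $x$ to lie in the $\vare_3$-ball around $\R^k$, and a density-point contradiction (Section~\ref{SS:UniqTC}) then shows that the bad set has $\mm$-measure zero.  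In other words, the paper's route uses that the charts are $\vare$-Gromov--Hausdorff approximations \emph{at every scale}, which is a stronger input than $(1+\vare)$-biLipschitz and is exactly what your surjectivity claim is trying to recover from the weaker hypothesis.
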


\subsection{Outline of Paper and Proof}

In the context when $X$ is a limit of smooth $n$-manifolds with $n$-dimensional Ricci curvature bounded from below, Theorem \ref{thm:rect} was first proved in \cite{CC97}.  There a key step was to prove hessian estimates on harmonic approximations of distance functions, and to use these to force splitting behavior.  In the context of general metric spaces the notion of a hessian is still not at the same level as it is for a smooth manifold, and cannot be used in such strength.  Instead we will prove entirely new estimates, both in the form of gradient estimates on the excess function and a new almost splitting theorem {\it with excess}, which will allow us to use the distance functions directly as our chart maps, a point which is new even in the smooth context.

In more detail, to prove Theorem \ref{thm:rect} we will first consider the stratification of $X$ composed by the following subsets $A_k\subset X$:
\begin{equation}\label{eq:defAkInt}
A_k:=\{x \in X \, : \, \text{there exists a tangent cone of $X$ at $x$ equal to  } \R^k \text{ but no tangent cone at } x \text{ splits } \R^{k+1}  \}.
\end{equation}
In Section   \ref{SS:Strat} it will be proved that $A_k$ is $\mm$-measurable, more precisely it is a difference of analytic subsets, and that $$\mm\left(X\setminus  \bigcup_{1\leq k \leq N} A_k\right)=0\quad .$$ Therefore Theorem \ref{thm:rect} and Corollary \ref{thm:UniqTC} will be consequences of the following more precise result, proved in Sections \ref{SS:Rect}-\ref{SS:UniqTC}.

\begin{theorem}[$\mm$-a.e. unique $k$-dimensional euclidean tangent cones and $k$-rectifiability of $A_k$]\label{thm:StructAk}
Let $(X,\sfd, \mm)$ be an $\RCD^*(K,N)$-space, for some $K,N\in \R ,N>1$ and let $A_k\subset X$, for $1\leq k \leq N$, be defined in \eqref{eq:defAkInt}. 

 Then the following holds:
\begin{enumerate}
\item For $\mm$-a.e. $x \in A_k$ the tangent cone of $X$ at $x$ is unique and isomorphic to the $k$-dimensional euclidean space.
\item  There exists $\bar{\vare}=\bar{\vare}(K,N)>0$ such that, for every $0<\vare\leq \bar{\vare}$, $A_k$ is $k$-rectifiable via   $1+\vare$-biLipschitz maps.  More precisely, for each $\vare>0$ we can cover $A_k$, up to an $\mm$-negligible subset, by a countable collection of sets $U^k_{\epsilon}$ with the property that each one is $1+\vare$-biLipschitz to a subset of $\R^k$.
\end{enumerate}
\end{theorem}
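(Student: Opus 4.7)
\smallskip\noindent\textbf{Overall strategy and stratification.} The plan is to reduce to an $\mm$-density point $x_0$ of a quantitatively uniform subset of $A_k$ via a maximal-function argument, and then to build an explicit $(1+\vare)$-bi-Lipschitz chart into $\R^k$ using $k$ distance functions as coordinates. Once assertion (2) is established for every $\vare>0$, assertion (1) will follow by a blow-up argument. The novelty compared with the smooth Cheeger--Colding rectifiability proof is that we forgo harmonic approximations and Hessian estimates, replacing them by the new sharp integral Abresh--Gromoll inequality and the Abresh--Gromoll-type estimate on the \emph{gradient} of the excess proved earlier in the paper. For $\delta,r>0$ define $A_{k,\delta,r}$ as the set of $x \in A_k$ for which, at every scale $s \in (0,r]$, the rescaled pointed ball $(B_s(x), s^{-1}\sfd, x)$ is within pmGH distance $\delta$ of the unit ball in $\R^k$ and not $\delta$-close to splitting an additional line. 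Since every $x \in A_k$ admits a Euclidean tangent cone $\R^k$ and none of its tangent cones splits $\R^{k+1}$, a maximal function argument applied to the scale-dependent ``defect from being $\R^k$'' shows that, for any $\delta_j \downarrow 0$, the union $\bigcup_j A_{k,\delta_j,r_j}$ exhausts $A_k$ up to an $\mm$-null set. It therefore suffices to work at an $\mm$-density point $x_0$ of a single $A_{k,\delta,r}$ with $\delta$ small depending on $\vare$.

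\smallskip\noindent\textbf{Chart construction.} Using the pmGH closeness of $B_r(x_0)$ to a Euclidean ball, pick $p_1,\dots,p_k$ and antipodal $q_1,\dots,q_k$ at distance $\sim r$ from $x_0$ whose pmGH images form a nearly orthonormal frame $\pm e_i$, so that each excess $e_{p_i,q_i}(x_0) = \sfd(x_0,p_i) + \sfd(x_0,q_i) - \sfd(p_i,q_i)$ is $o(r)$. For a small $r' \ll r$ define
\[
F : B_{r'}(x_0) \cap A_{k,\delta,r} \longrightarrow \R^k, \qquad F(y) := A\bigl(\sfd(y,p_1)-\sfd(x_0,p_1), \dots, \sfd(y,p_k)-\sfd(x_0,p_k)\bigr),
\]
where $A$ is an $O(\delta)$-perturbation of the identity that orthonormalises the frame. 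Each coordinate is $1$-Lipschitz, so $F$ is $(1+\vare)$-Lipschitz once $\delta$ is small enough.

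\smallskip\noindent\textbf{Lower bi-Lipschitz bound and main obstacle.} The crux is to prove $|F(y)-F(z)| \geq (1-\vare)\sfd(y,z)$ for $y,z$ in a subset $G \subset B_{r'}(x_0) \cap A_{k,\delta,r}$ of relative measure $\geq 1-\vare$. The sharp integral Abresh--Gromoll inequality controls each excess $e_{p_i,q_i}$ in $L^1$ on balls near $x_0$, while the new gradient-of-excess estimate controls $|\nabla e_{p_i,q_i}|$ in $L^2$. A maximal function argument then selects $G$ so that, pointwise on $G$ and at every small scale, all $e_{p_i,q_i}$ and $|\nabla e_{p_i,q_i}|$ are $\leq \vare$; hence the coordinates $b_i := \sfd(\cdot,p_i) - \sfd(x_0,p_i)$ have gradients of norm $1-O(\vare)$ and almost mutually orthogonal, the orthogonality being inherited from the near-Euclidean geometry at scale $r$. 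The Almost Splitting Theorem \emph{with excess}, applied iteratively in the $k$ directions $b_i$, upgrades this pointwise information to the desired lower bi-Lipschitz estimate on $G$: if it failed for some $y,z \in G$, the direction along a minimising geodesic joining them would produce an approximately orthogonal $(k+1)$-st almost-splitting direction at a density point of $G$, contradicting the defining property of $A_k$. The main obstacle is precisely this step: lacking Hessian information on harmonic approximations, the entire argument rests on the gradient-of-excess estimate and its interaction with the maximal function, which is what forces the introduction of an almost-splitting statement involving the excess rather than only the Busemann function.

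\smallskip\noindent\textbf{Conclusion.} Part (2) then follows by a Vitali-type covering of $\mm$-almost all of $A_k$ by countably many such charts $G$, applied to a countable family of choices of $(\delta,r,r')$. For part (1), at $\mm$-a.e.\ $x \in A_k$ part (2) (applied with $\vare_n \downarrow 0$) provides charts $G_n \ni x$ that are $(1+\vare_n)$-bi-Lipschitz to subsets of $\R^k$ with $x$ a density point of each $G_n$; hence every tangent cone of $X$ at $x$ is $(1+\vare_n)$-bi-Lipschitz to $\R^k$ for every $n$, thus isometric to $\R^k$, and uniqueness follows from the connectedness of the set of tangent cones in the pmGH topology together with the matching of the reference measure dictated by the $\mm$-density at $x$.
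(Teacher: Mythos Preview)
Your overall strategy mirrors the paper's, but there are two genuine gaps.

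First, your maximal-function step controls only the excesses $e_{p_i,q_i}$ and their gradients, not the mutual orthogonality of the directions $\nabla b_i$. The Almost Splitting Theorem with excess requires, besides $\fint|De_{p_i,q_i}|^2\leq\delta$, the cross-term estimate $\fint\big|D\big((\sfd^{p_i}+\sfd^{p_j})/\sqrt{2}-\sfd^{p_i+p_j}\big)\big|^2\leq\delta$ for auxiliary points $p_i+p_j$ approximating the diagonal directions; without it each $b_i$ could split off the \emph{same} $\R$-factor. Saying orthogonality is ``inherited from the near-Euclidean geometry at scale $r$'' only gives it at that one scale; nothing propagates it to smaller scales unless these cross-terms are fed into the maximal function as well, which is exactly what the paper does in its definition of $M^k$.

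Second, your set $A_{k,\delta,r}$ demands $\delta$-closeness to $\R^k$ at \emph{every} scale $s\leq r$, but $x\in A_k$ only guarantees this along \emph{some} sequence of scales, so the claimed exhaustion $\bigcup_j A_{k,\delta_j,r_j}=A_k$ up to a null set does not follow. The paper's order of operations is the reverse: from $\R^k\in\Tan(X,\bar x)$ one fixes a \emph{single} good scale and derives the excess and orthogonality estimates there (Theorem~\ref{lem:ConstrU}); the maximal function propagates these \emph{analytic} bounds to all smaller scales on a large set $U^k_{\vare_1,\delta_1}$; only then does the Almost Splitting Theorem, combined with the fact that $x\in A_k$ forbids a nontrivial transverse factor $Y$ (Lemma~\ref{lem:delta}), yield closeness to $\R^k$ at all scales. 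Closeness to $\R^k$ at all scales is thus the \emph{output}, not an input. Once this is in hand, the lower bi-Lipschitz bound is immediate---conclusion (2) of the Almost Splitting Theorem says the distance map $u=(\sfd(p_1,\cdot),\ldots,\sfd(p_k,\cdot))$ is itself the $\vare$-quasi-isometry at each scale, so applying it at scale $r=\tilde\sfd(y,z)$ gives $\big||u(y)-u(z)|-\tilde\sfd(y,z)\big|\leq\vare\,\tilde\sfd(y,z)$ directly---and part (1) follows because those same rescalings are already $\vare$-close to $\R^k$ in pmGH; your contradiction via a $(k{+}1)$-st direction and your bilipschitz-blow-up route for uniqueness are unnecessarily indirect.
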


The proof of  Theorem \ref{thm:StructAk} is based on a  maximal function argument  combined with an explicit construction of Gromov-Hausdorff quasi-isometries with estimates (see Theorem \ref{lem:ConstrU}) and an original almost Splitting Theorem via  excess (see Theorem \ref{thm:AlmSplit}).

In a little more detail, given $\bar x\in A_k$ let $r>0$ such that $B_{\delta^{-1}r}(\bar x)$ is $\delta r$-close in the measured Gromov-Hausdorff sense to a ball in $\R^k$.  By the definition of $A_k$ we can find such $r>0$ for any $\delta>0$.  For some radius $r<<R<<\delta^{-1}r$ we can then pick points $\{p_i,q_i\}\in X$ which correspond to the bases $\pm R e_i$ of $\R^k$, respectively.  Let us consider the map $\vec d=\Big(\sfd(p_1,\cdot)-\sfd(p_1, \bar x),\ldots,\sfd(p_k,\cdot)-\sfd(p_k, \bar x)\Big):B_r(\bar x)\to \R^k$.  It is clear for $\delta$ sufficiently small that $\vec d$ is automatically an $\varepsilon r$-measured Gromov-Hausdorff map between $B_r(\bar x)$ and $B_r(0^k)$.  Our primary claim in this paper is that there is a set $U_\varepsilon\subseteq B_r(\bar x)$ of almost full measure such that for each $y\in U_\varepsilon$ and $s\leq r$, the restriction map $\vec d:B_{s}(y)\to\R^k$ is an $\varepsilon s$-measured Gromov-Hausdorff map.  From this we can show that the restriction map $\vec d:U_\varepsilon\to \R^k$ is in fact $1+\varepsilon$-biLipschitz onto its image.  By covering $A_k$ with such sets we will show that $A_k$ is itself rectifiable.

In order to construct the set $U_\varepsilon$ we rely on Theorem \ref{lem:ConstrU} in which it is shown that the gradient of the excess functions of the points $\{p_i,q_i\}$ is small in $L^2$.  Roughly, the set $U_\varepsilon$ is chosen by a maximal function argument to be the collection of points where the gradient of the excess remains small at all scales.  To exploit this information, in Section \ref{Sec:AlmSplit}, we  obtain an \emph{Almost Splitting Theorem via excess estimates}.  Roughly, this will tell us that at such points the $\R^k$ splitting is preserved at all scales, which is the required result to prove the main theorem.  Let us mention that the Almost Splitting Theorem in the smooth framework is  due to Cheeger-Colding \cite{CC96} and  is based on the existence of an ``almost line'';  here, the framework is the one of non smooth $\RCD^*(-\delta,N)$-spaces and the hypothesis on the existence of an ``almost line''  is replaced by an assumption on the smallness of the gradient of the excess.  Let us stress that this variant of Cheeger-Colding Almost Splitting Theorem is new even in the smooth setting. From the technical point of view our strategy is to use the estimates on the gradient of the excess  in order to construct an appropriate replacement for the Busemann function (which is a priori not available since we do not assume existence of lines) and then to adapt the arguments of the proof by Gigli  \cite{GigliSplitting}-\cite{GigliSplittingSur} of the Splitting Theorem in $\RCD^*(0,N)$-spaces.\\

In order to perform such a program, we start in Section \ref{Sec:Prel} by recalling basic notions of metric measure spaces, the measured Gromov-Hausdorff convergence, the definition of lower Ricci curvature bounds on metric-measure spaces, and a brief review of some of their basic properties.  In particular we will discuss some useful estimates and properties of the heat flow on such spaces which will be useful throughout this paper.\\

In Section \ref{S:EstHeatApprox}, inspired by the work  \cite{CN} of Colding and the second author, we regularize the distance function via the heat flow getting sharp estimates.  From a technical standpoint we also construct Lipschitz cut-off functions with $L^\infty$ estimates on the Laplacian.  Among other things this is used to obtain an improved integral Abresh-Gromoll inequality in $\RCD^*(K,N)$-spaces (see Theorem \ref{thm:ImprAbrGrom}) and an integral estimate on the \emph{gradient} of the excess function near a geodesic (see Theorem \ref{thm:GradExc}). Let us mention that the classical Abresh-Gromoll inequality was established in \cite{AG90} and then improved  to a sharp integral version in \cite{CN} in the smooth setting of Riemannian manifolds with lower Ricci curvature bounds.  In Theorem \ref{thm:ImprAbrGrom} we establish in $\RCD^*(K,N)$-spaces an analogue of the sharp  integral version of the Abresh-Gromoll inequality of  \cite{CN} and then use it to prove a new Abresh-Gromoll type inequality on the \emph{gradient} of the excess in Theorem \ref{thm:GradExc}.  This will be the starting point to construct the Gromov-Haudorff approximation with estimate, Theorem \ref{lem:ConstrU}, which is at the basis of the proof of the Rectifiability  Theorem \ref{thm:StructAk}, as explained above.\\

In Section \ref{s:Gromov_Hausdorff} we use the results established in Section \ref{S:EstHeatApprox} in order to show  that $A_k$ may be covered by distance function ``charts'' with good gradient estimates.  In particular this will rigorously construct the previously discussed sets $U_\varepsilon$.  In Section \ref{Sec:AlmSplit} we prove our {\it Almost Splitting with Excess} result in order to show that these charts have the required splitting behavior on sets of large measure.  Finally in Section \ref{Sec:MainRes} we combine these tools in order to prove our main theorems.  That is, using the almost splitting theorem we first show that the sets $U_\varepsilon$ are biLipschitz to subsets of $\R^k$, and then using a covering argument this yields the desired rectifiability of $A_k$.

\bigskip

\noindent {\bf Acknowledgment.}  The second author acknowledges the support
of the ETH Fellowship. He  wishes to express his deep gratitude to Luigi Ambrosio, Nicola Gigli and Giuseppe Savar\'e  for having introduced him to the topic of metric measure spaces with lower Ricci curvature bounds.

\section{Preliminaries and notation}\label{Sec:Prel}
\subsection{Pointed metric measure spaces and their equivalence classes}
The basic objects we will deal with throughout the paper are metric measure spaces and pointed metric measure spaces, m.m.s. and p.m.m.s. 
 for short. First of all let us recall the standard definitions. 
 
A m.m.s. is a triple $(X,\sfd,\mm)$ where $(X,\sfd)$ is a complete and separable metric space and  $\mm$ is a locally finite (i.e. finite on bounded subsets) non-negative complete Borel measure  on it.

It will often be the case that the measure 
$\mm$ is \emph{doubling}, i.e. such that
\begin{equation}\label{eq:mmdoubl}
0< \mm(B_{2r}(x)) \leq C(R) \;  \mm(B_r(x)),\qquad\forall x\in X,\ r\leq R,
\end{equation}
for some positive function $C(\cdot):[0,+\infty) \to (0,+\infty)$ which can, and will, be taken to be non-decreasing.

The bound  \eqref{eq:mmdoubl} implies that $\supp \mm=X$ and $\mm \neq 0$ and by  iteration one gets
\begin{equation}\label{eq:genDoub}
 {\mm(B_R(a))}\leq \mm(B_r(x))\big(C(R)\big)^{\log_2(\frac{r}{R})+2}, \qquad\forall 0<r\leq R,\ a\in X,\ x\in B_R(a).
\end{equation}
In particular  bounded subsets are totally bounded and hence doubling spaces are proper. 

\smallskip

A p.m.m.s is a quadruple $(X,\sfd,\mm,\bar x)$ where $(X,\sfd,\mm)$ is a metric measure space and $\bar x\in \supp(\mm)$ is a given reference point. Two p.m.m.s.  $(X,\sfd,\mm, \bar{x})$,  $(X',\sfd',\mm', \bar{x}')$ are said to be isomorphic if there exists an isometry $T:(\supp(\mm),\sfd)\to(\supp(\mm'),\sfd')$ such that $T_\sharp\mm=\mm'$ and $T(\bar{x})=\bar{x}'$.

We say that a p.m.m.s. $(X,\sfd,\mm,\bar x)$ is normalized provided $\int_{B_1(\bar x)}1-\sfd(\cdot,\bar x)\,\d\mm=1$. Obviously, given any p.m.m.s. $(X,\sfd,\mm,\bar x)$ there exists a unique $c> 0$ such that $(X,\sfd,c\mm,\bar x)$ is normalized, namely $c:=(\int_{B_1(\bar x)}1-\sfd(\cdot,\bar x)\,\d\mm)^{-1}$.

We denote by $\MM_{C(\cdot)}$ the class of (isomorphism classes of) normalized p.m.m.s. fulfilling \eqref{eq:mmdoubl} for a given non-decreasing $C:(0,\infty)\to(0,\infty)$.
 
\subsection{Pointed measured Gromov-Hausdorff topology and  measured tangents}
We will adopt the following definition of convergence of p.m.m.s (see \cite{BuragoBuragoIvanov}, \cite{GMS2013}
 and  \cite{Villani09}):
\begin{definition}[Pointed measured Gromov-Hausdorff convergence]\label{def:conv}
A sequence $(X_j,\sfd_j,\mm_j,\bar{x}_j)$ is said to converge 
in the  pointed measured Gromov-Hausdorff topology (p-mGH for short) to 
$(X_\infty,\sfd_\infty,\mm_\infty,\bar{x}_\infty)$ if there 
exists a separable metric space $(Z,\sfd_Z)$ and isometric embeddings  
$\{\iota_j:(\supp(\mm_j),\sfd_j)\to (Z,\sfd_Z)\}_{i \in \bar{\N}}$ such that
for every 
$\varepsilon>0$ and $R>0$ there exists $i_0$ such that for every $i>i_0$
\[
\iota_\infty(B^{X_\infty}_R(\bar{x}_\infty)) \subset B^Z_{\varepsilon}[\iota_j(B^{X_j}_{R+\varepsilon} (\bar{x}_j))]  \qquad \text{and} \qquad  \iota_j(B^{X_j}_R(\bar{x}_j)) \subset B^Z_{\varepsilon}[\iota_\infty(B^{X_\infty}_{R+\varepsilon} (\bar{x}_\infty))], 
\]
where $B^Z_\varepsilon[A]:=\{z \in Z: \, \sfd_Z(z,A)<\varepsilon\}$ for every subset $A \subset Z$, and 
\[
\int_Y \varphi \, \d ((\iota_j)_\sharp(\mm_j))\qquad    \to \qquad  \int_Y \varphi \, \d  ((\iota_\infty)_\sharp(\mm_\infty)) \qquad \forall \varphi \in C_b(Z), 
\]
where $C_b(Z)$ denotes the set of real valued bounded continuous functions with bounded support in $Z$.
\end{definition}
Sometimes in the following, for simplicity of notation, we will identify the spaces $X_j$ with 
their isomorphic copies $\iota_j(X_j)\subset Z$. 

It is obvious that this is in fact a notion of convergence for isomorphism classes of p.m.m.s., moreover it is induced by a metric   (see e.g. \cite{GMS2013} for details):
\begin{proposition}\label{prop:comp}
Let $C:(0,\infty)\to (0,\infty)$ be a non-decreasing function. Then there exists a distance $\mathcal D_{C(\cdot)}$ on $\MM_{C(\cdot)}$ for which converging sequences are precisely those converging in the p-mGH sense. Furthermore, the space  $(\MM_{C(\cdot)},\mathcal D_{C(\cdot)})$  is compact.
\end{proposition}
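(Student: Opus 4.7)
The plan is to build $\mathcal D_{C(\cdot)}$ as a weighted sum of scale-by-scale Gromov-Hausdorff-Prokhorov distances, then check the metric axioms, the equivalence with the p-mGH convergence of Definition \ref{def:conv}, and finally compactness via Gromov precompactness together with Prokhorov's theorem.

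First I would define, for each $R>0$, a pre-distance $\delta_R$ between two normalized p.m.m.s. $(X_i,\sfd_i,\mm_i,\bar x_i)\in \MM_{C(\cdot)}$, $i=1,2$, by taking the infimum of $\varepsilon>0$ for which there exist a complete separable $(Z,\sfd_Z)$ and isometric embeddings $\iota_i:(\supp\mm_i,\sfd_i)\to(Z,\sfd_Z)$ with $\iota_1(\bar x_1)=\iota_2(\bar x_2)$ satisfying
\[
d_H^Z\bigl(\iota_1(B_R(\bar x_1)),\iota_2(B_R(\bar x_2))\bigr)<\varepsilon, \qquad d_{LP}^Z\bigl((\iota_1)_\sharp(\mm_1\llcorner B_R(\bar x_1)),(\iota_2)_\sharp(\mm_2\llcorner B_R(\bar x_2))\bigr)<\varepsilon,
\]
where $d_H^Z$ is Hausdorff distance and $d_{LP}^Z$ is Lévy-Prokhorov distance. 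I would then set
\[
\mathcal D_{C(\cdot)}(X_1,X_2):=\int_0^\infty e^{-R}\min\bigl(1,\delta_R(X_1,X_2)\bigr)\,\d R.
\]
Symmetry and $\mathcal D_{C(\cdot)}\geq 0$ are immediate, and the triangle inequality follows from the standard \emph{gluing} of ambient spaces: given embeddings witnessing $\delta_R(X_1,X_2)<\alpha$ and $\delta_R(X_2,X_3)<\beta$, one glues along $\supp\mm_2$ to produce a single ambient space in which the combined Hausdorff and Prokhorov quantities are controlled by $\alpha+\beta$. Non-degeneracy, i.e. $\mathcal D_{C(\cdot)}(X_1,X_2)=0\Rightarrow X_1\cong X_2$, is proved by choosing almost optimal embeddings at each integer scale $R\in\N$, extracting a diagonal simultaneous embedding into a common Polish space via the usual Urysohn-type construction, and concluding by the standard rigidity of $0$-Gromov-Hausdorff distance together with weak convergence of measures (here the normalization condition rules out the trivial case $\mm_i=0$ on some ball).

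For the equivalence with the p-mGH topology, if $\mathcal D_{C(\cdot)}(X_j,X_\infty)\to 0$ then $\delta_R(X_j,X_\infty)\to 0$ for a.e. $R$ (hence all $R$, by monotonicity of $\delta_R$ in $R$ after accounting for the truncation); the near-optimal embeddings at a sequence of scales $R_m\to\infty$ can be glued by a diagonal argument into embeddings $\iota_j$ into a single $(Z,\sfd_Z)$ realizing exactly the Hausdorff plus weak-convergence conditions of Definition \ref{def:conv}. Conversely, the data of a p-mGH converging sequence directly bounds $\delta_R$, hence $\mathcal D_{C(\cdot)}$, from above.

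For compactness I would argue by Gromov precompactness followed by Prokhorov. Given a sequence $(X_j,\sfd_j,\mm_j,\bar x_j)\in\MM_{C(\cdot)}$, the iterated doubling bound \eqref{eq:genDoub} gives, for each $R$, a uniform covering number of $B_R(\bar x_j)$ by balls of any radius $r>0$, depending only on $R,r,C(\cdot)$ and on a uniform \emph{lower} bound for $\mm_j(B_1(\bar x_j))$. That lower bound is forced by normalization: $\int_{B_1(\bar x_j)}(1-\sfd(\cdot,\bar x_j))\,\d\mm_j=1$ implies $\mm_j(B_1(\bar x_j))\geq 1$, and iterating \eqref{eq:mmdoubl} then upper-bounds $\mm_j(B_R(\bar x_j))$ in terms of $C(\cdot)$ only. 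By the classical Gromov compactness theorem we may, up to subsequence and diagonal extraction, embed all $(\supp\mm_j,\sfd_j,\bar x_j)$ isometrically into a common proper $(Z,\sfd_Z)$ so that the images converge in pointed Hausdorff sense to a proper pointed metric space $(X_\infty,\sfd_\infty,\bar x_\infty)$. The measures $(\iota_j)_\sharp\mm_j$ restricted to any ball $\overline{B_R}(\bar x_\infty)\subset Z$ are uniformly finite, so Prokhorov yields a further subsequence and a locally finite Borel measure $\mm_\infty$ on $X_\infty$ such that $(\iota_j)_\sharp\mm_j\rightharpoonup\mm_\infty$ locally weakly. Passing \eqref{eq:mmdoubl} to the limit (slightly perturbing radii to avoid the $\mm_\infty$-measure of spheres, which is non-zero for at most countably many values) shows the limit lies in $\MM_{C(\cdot)}$, and the continuous compactly supported test function $1-\sfd(\cdot,\bar x_\infty)$ passes to the limit and preserves normalization. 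This gives the desired p-mGH subsequential limit.

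The main obstacle I expect is handling the \emph{lower-semicontinuity vs upper-semicontinuity} asymmetry when propagating the doubling inequality \eqref{eq:mmdoubl} through the weak limit of measures, since the Portmanteau theorem only gives one-sided bounds on open vs closed balls. This is where the density of radii $r$ with $\mm_\infty(\partial B_r)=0$ is essential, and one has to choose such generic radii and then conclude that the doubling extends to all $r$ by monotonicity and the monotone class argument.
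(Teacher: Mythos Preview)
Your approach is correct and matches the paper's: the paper does not give a detailed proof but refers to \cite{GMS2013} for the metrization and sketches exactly the Gromov precompactness plus Prokhorov argument you carry out (uniform doubling $\Rightarrow$ uniform total boundedness of balls $\Rightarrow$ GH-compactness, then weak compactness of measures from doubling plus normalization). Your write-up simply fleshes out that sketch; the only minor imprecision is the claimed monotonicity of $\delta_R$ in $R$, which is not literal but is easily repaired by the standard device of working with generic radii or closed balls.
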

Notice that the compactness of  $(\MM_{C(\cdot)},\mathcal D_{C(\cdot)})$ follows by the standard argument of Gromov: the measures of spaces in  $\MM_{C(\cdot)}$ are uniformly doubling, hence balls of given radius around the reference points are uniformly totally bounded and thus compact in the GH-topology. Then weak compactness of the measures follows using the doubling condition again and the fact that they are normalized.

Before defining the measured tangents, let us recall that an equivalent way to define p-mGH convergence is via $\vare$-quasi isometries as follows.
\begin{proposition}[Equivalent definition of p-mGH convergence]
Let $(X_n,\sfd_n,\mm_n,\bar x_n)$, $n\in\N\cup\{\infty\}$,
be pointed metric measure spaces as above. Then  $(X_n,\sfd_n,\mm_n,\bar x_n)\to
(X_\infty,\sfd_\infty,\mm_\infty,\bar x_\infty)$ in the pmGH-sense if and only if  for any $\vare,R>0$ there exists $N({\vare,R})\in \N$ such that for all $n\geq N({\vare,R})$ there exists a Borel map $f^{R,\vare}_n:B_R(\bar x_n)\to X_\infty$ such that
\begin{itemize}
\item $f^{R,\vare}_n(\bar x_n)=\bar x_\infty$,
\item $\sup_{x,y\in B_R(\bar x_n)}|\sfd_n(x,y)-\sfd_\infty(f^{R,\vare}_n(x),f^{R,\vare}_n(y))|\leq\vare$,
\item the $\vare$-neighbourhood of $f^{R,\vare}_n(B_R(\bar x_n))$ contains $B_{R-\vare}(\bar x_\infty)$,
\item  $(f^{R,\vare}_n)_\sharp(\mm_n\llcorner{B_R(\bar x_n)})$ weakly converges to $\mm_\infty\llcorner{B_R(x_\infty)}$ as $n\to\infty$, for a.e. $R>0$.
\end{itemize}
 \end{proposition}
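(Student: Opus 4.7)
The plan is to establish the two implications separately. For the forward direction, assume the pmGH convergence is witnessed by a separable metric space $(Z,\sfd_Z)$ and isometric embeddings $\iota_n,\iota_\infty$. Fix $\vare,R>0$ and choose $N(\vare,R)$ large enough that for $n\geq N(\vare,R)$ the Hausdorff distance between $\iota_n(B_R(\bar x_n))$ and $\iota_\infty(B_R(\bar x_\infty))$ in $Z$ is less than $\vare/4$ and in particular $\sfd_Z(\iota_n(\bar x_n),\iota_\infty(\bar x_\infty))<\vare/4$. For each $x\in B_R(\bar x_n)$ the Hausdorff closeness yields some $y\in X_\infty$ with $\sfd_Z(\iota_n(x),\iota_\infty(y))<\vare/4$, and I would apply the Kuratowski--Ryll-Nardzewski measurable selection theorem to the closed-valued multifunction $x\mapsto\{y\in X_\infty:\sfd_Z(\iota_n(x),\iota_\infty(y))\leq \vare/4\}$ to produce a Borel map $f_n^{R,\vare}$, overriding the value at $\bar x_n$ to be $\bar x_\infty$ (which costs at most $\vare/2$ extra distortion near the basepoint and leaves the global distortion bounded by $\vare$). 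The isometric embedding property and the triangle inequality then force the distance distortion bound; the reverse Hausdorff closeness yields that $f_n^{R,\vare}(B_R(\bar x_n))$ is $\vare$-dense in $B_{R-\vare}(\bar x_\infty)$.

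For the backward direction, suppose maps $f_n^{R,\vare}$ exist as stated. I would extract a diagonal sequence with $\vare_n\downarrow 0$ and $R_n\uparrow\infty$ and write $f_n:=f_n^{R_n,\vare_n}$, and construct an ambient space by taking $Z:=X_\infty\sqcup\bigsqcup_n X_n$ with $\sfd_Z$ agreeing with the original metrics within each component, and for $x\in X_n$, $y\in X_\infty$ defined by
\[
\sfd_Z(x,y):=\inf_{x'\in B_{R_n}(\bar x_n)}\bigl\{\sfd_n(x,x')+\vare_n+\sfd_\infty(f_n(x'),y)\bigr\},
\]
extended between different $X_n, X_m$ by going through $X_\infty$, and then symmetrized and quotiented by zero-distance pairs. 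The distance-distortion condition on $f_n$ is exactly what is needed to check that the natural inclusions of $X_n$ and $X_\infty$ into $Z$ are isometric embeddings (not only $\vare$-approximate), and the coverage condition translates directly into the two-sided Hausdorff inclusions required in Definition of pmGH convergence. A standard pseudo-metric verification (triangle inequality via an approximation argument with two nested infima) completes this construction.

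The remaining task, in both directions, is the weak convergence of pushforward measures. In the forward direction, given $\varphi\in C_b(X_\infty)$ with support in a bounded set, I would extend $\varphi\circ\iota_\infty^{-1}$ to a continuous bounded $\tilde\varphi$ on $Z$ (e.g. by McShane-type extension using the metric $\sfd_Z$), multiply by a cutoff adapted to a neighborhood of $\iota_\infty(B_R(\bar x_\infty))$, and compare $\int_{X_n}\varphi\circ f_n^{R,\vare}\,\d\mm_n$ with $\int_Z\tilde\varphi\,\d(\iota_n)_\sharp\mm_n$; the difference is controlled by the modulus of continuity of $\tilde\varphi$ evaluated at $\vare/4$, while the latter integral converges to $\int_Z\tilde\varphi\,\d(\iota_\infty)_\sharp\mm_\infty=\int_{X_\infty}\varphi\,\d\mm_\infty$ by the pmGH assumption. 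In the backward direction the analogous statement is an immediate transcription of the fourth hypothesis on $f_n$, once one notes that the inclusion of $X_\infty$ into $Z$ is isometric.

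The main obstacle is this measure-comparison step: one must reconcile two a priori distinct test-function classes, $C_b(X_\infty)$ and $C_b(Z)$, despite the lack of canonical retraction from $Z$ onto $\iota_\infty(X_\infty)$. The use of the bounded extension $\tilde\varphi$ together with uniform continuity on a fixed bounded neighborhood, and truncation by cutoffs to exploit local finiteness of $\mm_\infty$, is the technical core. Everything else, including handling the ``for a.e.~$R$'' subtlety in the fourth bullet and the basepoint adjustment, is a routine perturbation that does not affect the diagonal extraction.
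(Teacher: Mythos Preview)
The paper does not actually prove this proposition. It is introduced with the phrase ``let us recall that an equivalent way to define p-mGH convergence is via $\vare$-quasi isometries as follows'' and is stated without proof, with implicit reference to the surrounding literature (the nearby citations are to \cite{BuragoBuragoIvanov}, \cite{GMS2013}, \cite{Villani09}). So there is no ``paper's own proof'' to compare against.

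Your sketch follows the standard route and is essentially correct. The forward direction via measurable selection in the ambient space $Z$ and the backward direction via the glued pseudo-metric space are exactly the expected constructions; your treatment of the measure convergence by extending test functions from $X_\infty$ to $Z$ and controlling the error by the modulus of continuity at scale $\vare$ is also the right idea. One minor point: in the backward direction you should be slightly more careful that the diagonal extraction $(\vare_n,R_n)$ is done so that the fourth bullet (which holds only for a.e.\ $R$) is respected along the chosen sequence of radii; this is routine but worth stating explicitly. Otherwise the argument is sound and matches what one finds in the cited references.
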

\bigskip

A crucial role in this paper is played by measured tangents, which are defined as follows. Let  $(X,\sfd,\mm)$ be a m.m.s.,  $\bar x\in \supp(\mm)$ and $r\in(0,1)$; we consider the rescaled and normalized p.m.m.s. $(X,r^{-1}\sfd,\mm^{\bar{x}}_r,\bar x)$ where the measure $\mm^{\bar x}_r$ is given by
\begin{equation}
\label{eq:normalization}
\mm^{\bar x}_r:=\left(\int_{B_r(\bar x)}1-\frac 1r\sfd(\cdot,\bar x)\,\d\mm\right)^{-1}\mm.
\end{equation}
Then we define:
\begin{definition}[The collection of tangent spaces $\Tan(X,\sfd,\mm,\bar{x})$]
Let  $(X,\sfd,\mm)$ be a m.m.s. and  $\bar x\in \supp(\mm)$. A p.m.m.s.  $(Y,\sfd_Y,\nn,y)$ is called a
\emph{tangent} to $(X,\sfd,\mm)$ at $\bar{x} \in X$ if there exists a sequence of radii $r_i \downarrow 0$ so that
$(X,r_i^{-1}\sfd,\mm^{\bar{x}}_{r_i},\bar{x}) \to (Y,\sfd_Y,\nn,y)$ as 
$i \to \infty$ in the pointed measured Gromov-Hausdorff topology.

We denote the collection of all the tangents of $(X,\sfd,\mm)$ at 
$\bar{x} \in X$ by $\Tan(X,\sfd,\mm,\bar{x})$. 
\end{definition}
\begin{remark}
See \cite{CN13} for basic properties of $\Tan(X,\sfd,\mm,\bar{x})$ for Ricci-limit spaces.
\end{remark}

Notice that if $(X,\sfd,\mm)$ satisfies \eqref{eq:mmdoubl} for some non-decreasing $C:(0,\infty)\to(0,\infty)$, then $(X,r^{-1}\sfd,\mm^{\bar x}_r,\bar x)\in \MM_{C(\cdot)}$ for every $\bar x\in X$ and $r\in(0,1)$ and hence   the compactness stated in Proposition \ref{prop:comp} ensures that the set $\Tan(X,\sfd,\mm,\bar{x})$ is non-empty. 

It is also worth to notice that the map
\[
\supp(\mm) \ni x\qquad\mapsto \qquad (X,\sfd,\mm^x_r,x),
\]
is (sequentially) $\sfd$-continuous for every $r>0$, the target space being endowed with the p-mGH convergence. 
 
\subsection{Cheeger energy and Sobolev Classes}
 It is out of the scope of this short subsection  to provide full details about the definition of the Cheeger energy and the associated Sobolev space $W^{1,2}(X,\sfd,\mm)$, we will instead be satisfied in recalling some  basic notions used in the paper (we refer to \cite{AGS11a}, \cite{AGS11b}, \cite{AGS12} for the basics on calculus in metric 
measure spaces).

First of all recall that on a m.m.s. there is not a canonical notion of ``differential of a function'' $f$ but at least one has an $\mm$-a.e. defined  ``modulus of the differential'', called weak upper differential and denoted with $|Df|_w$; let us just mention that this object  arises from the relaxation
in $L^2(X,\mm)$ of the local Lipschitz constant 
\begin{equation}
  \label{eq:20}
  |D f|(x):=\limsup_{y\to x}\frac{|f(y)-f(x)|}{\sfd(y,x)},\quad
  f:X\to \R,
\end{equation}
of Lipschitz functions. With this object one defines the Cheeger energy
$$\Ch(f):=\frac 1 2 \int_X |Df|_w^2 \, \d \mm.$$
The  Sobolev space $W^{1,2}(X,\sfd,\mm)$ is by definition  the space of $L^2(X,\mm)$ functions having finite Cheeger energy, and it is endowed with the natural norm  $\|f\|^2_{W^{1,2}}:=\|f\|^2_{L^2}+2 \Ch(f)$ which makes it a Banach space. We remark that, in general, $W^{1,2}(X,\sfd,\mm)$ is not Hilbert (for instance, on a smooth Finsler manifold the space $W^{1,2}$ is Hilbert if and only if the manifold is actually Riemannian); in case  $W^{1,2}(X,\sfd,\mm)$ is  Hilbert then, following the notation introduced in \cite{AGS11b} and \cite{Gigli12}, we say that $(X,\sfd,\mm)$ is \emph{infinitesimally Hilbertian}.  As explained in \cite{AGS11b}, \cite{AGS12}, the quadratic form $\Ch$ canonically 
induces a strongly regular Dirichlet
form in $(X,\tau)$, where $\tau$ is the topology induced by $\sfd$. In addition, but this fact is less elementary (see \cite[\S4.3]{AGS11b}), the formula
$$
\Gamma(f)=|D f|_w^2,\quad
\Gamma(f,g)=\lim_{\epsilon\downarrow 0}\frac{|D(f+\epsilon g)|_w^2-|Df|_w^2}{2\epsilon}
\quad\qquad f,\,g\in W^{1,2}(X,\sfd,\mm)\, , $$%
where the limit takes place in $L^1(X,\mm)$, provides an explicit expression  
of the associated \emph{Carr\'e du Champ} $\Gamma:W^{1,2}(X,\sfd,\mm) \times W^{1,2}(X,\sfd,\mm)    \to
L^1(X,\mm)$ and 
yields the pointwise
upper estimate
\begin{equation}
  \label{eq:19}
  \Gamma(f) \leq |D f|^2\quad\text{$\mm$-a.e.\ in $X$,
    whenever }f\in \Lip(X)\cap L^2(X,\mm),\quad |D f|\in L^2(X,\mm),
\end{equation}
where, of course, $\Lip(X)$ denotes the set of real valued Lipschitz  functions on $(X,\sfd)$. Observe that clearly, in a smooth Riemannian setting, the Carr\'e du Champ $\Gamma(f,g)$ coincides  with the usual scalar product of the gradients of the functions $f$ and $g$. Moreover by a nontrivial result of Cheeger \cite{Cheeger97} we have in  locally doubling \& Poincar\'e spaces that for locally Lipschitz functions the local Lipschitz constant  and the weak upper differential coincide $\mm$-a.e..  Below we will make use of the local Sobolev space $W^{1,2}_{loc}(\Omega)$, for $\Omega\subset X$ open set; by definition $W^{1,2}_{loc}(\Omega)$ is made of those Borel functions $f:\Omega\to \R$ such that for every Lipschitz function $\chi: X\to \R$ with bounded support well contained in $\Omega$ (i.e. having strictly positive distance from $X\setminus \Omega$) it holds $\chi f \in W^{1,2}(X,\sfd,\mm)$, where by definition we set $\chi f=0$ on $X\setminus \Omega$.

\subsection{Lower Ricci curvature bounds}\label{SS:defRCD}
In this subsection  we quickly recall some basic definitions and properties of spaces with lower Ricci curvature bounds that we will use later on.

We denote by $\Prob(X)$ the space of Borel probability measures on the complete and separable metric space $(X,\sfd)$ and by $\Prob_2 (X) \subset \Prob (X)$ the subspace consisting of all the probability measures with finite second moment.

For $\mu_0,\mu_1 \in \Prob_2(X)$ the quadratic transportation distance $W_2(\mu_0,\mu_1)$ is defined by
\begin{equation}\label{eq:Wdef}
  W_2^2(\mu_0,\mu_1) = \inf_{\sggamma} \int_X \sfd^2(x,y) \,\d\ggamma(x,y),
\end{equation}
where the infimum is taken over all $\ggamma \in \Prob(X \times X)$ with $\mu_0$ and $\mu_1$ as the first and the second marginals.

Assuming the space $(X,\sfd)$ is a length space, also the space $(\Prob_2(X), W_2)$ is a length space. We denote  by $\geo(X)$ the space of (constant speed minimizing) geodesics on $(X,\sfd)$ endowed with the $\sup$ distance, and by $\e_t:\geo(X)\to X$, $t\in[0,1]$, the evaluation maps defined by $\e_t(\gamma):=\gamma_t$. It turns out that any geodesic $(\mu_t) \in \geo(\Prob_2(X))$ can be lifted to a measure $\ppi \in \Prob(\geo(X))$, so that $(\e_t)_\#\ppi = \mu_t$ for all $t \in [0,1]$. Given $\mu_0,\mu_1\in \Prob_2(X)$, we denote by $\gopt(\mu_0,\mu_1)$ the space of all
$\ppi \in \Prob(\geo(X))$ for which $(\e_0,\e_1)_\#\ppi$ realizes the minimum in \eqref{eq:Wdef}. If $(X,\sfd)$ is a length space, then the set $\gopt(\mu_0,\mu_1)$ is non-empty for any $\mu_0,\mu_1\in \Prob_2(X)$.

We turn to the formulation of the $\CD^*(K,N)$ condition, coming from  \cite{BS2010}, to which we also refer for a detailed discussion of its relation with the $\CD(K,N)$ condition
 (see also \cite{Cavalletti12}).

Given $K \in \R$ and $N \in [1, \infty)$, we define the distortion coefficient $[0,1]\times\R^+\ni (t,\theta)\mapsto \sigma^{(t)}_{K,N}(\theta)$ as
\[
\sigma^{(t)}_{K,N}(\theta):=\left\{
\begin{array}{ll}
+\infty,&\qquad\textrm{ if }K\theta^2\geq N\pi^2,\\
\frac{\sin(t\theta\sqrt{K/N})}{\sin(\theta\sqrt{K/N})}&\qquad\textrm{ if }0<K\theta^2 <N\pi^2,\\
t&\qquad\textrm{ if }K\theta^2=0,\\
\frac{\sinh(t\theta\sqrt{K/N})}{\sinh(\theta\sqrt{K/N})}&\qquad\textrm{ if }K\theta^2 <0.
\end{array}
\right.
\]
\begin{definition}[Curvature dimension bounds]\label{d:CD*}
Let $K \in \R$ and $ N\in[1,  \infty)$. We say that a m.m.s.  $(X,\sfd,\mm)$
 is a $\CD^*(K,N)$-space if for any two measures $\mu_0, \mu_1 \in \Prob(X)$ with support  bounded and contained in $\supp(\mm)$ there
exists a measure $\ppi \in \gopt(\mu_0,\mu_1)$ such that for every $t \in [0,1]$
and $N' \geq  N$ we have
\begin{equation}\label{eq:CD-def}
-\int\rho_t^{1-\frac1{N'}}\,\d\mm\leq - \int \sigma^{(1-t)}_{K,N'}(\sfd(\gamma_0,\gamma_1))\, \rho_0^{-\frac1{N'}}(\gamma_{0})+\sigma^{(t)}_{K,N'}(\sfd(\gamma_0,\gamma_1))\, \rho_1^{-\frac1{N'}}(\gamma_{1}) \,\d\ppi(\gamma)
\end{equation}
where for any $t\in[0,1]$ we  have written $(\e_t)_\sharp\ppi=\rho_t\mm+\mu_t^s$  with $\mu_t^s \perp \mm$.
\end{definition}
Notice that if $(X,\sfd,\mm)$ is a $\CD^*(K,N)$-space, then so is $(\supp(\mm),\sfd,\mm)$, hence it is not restrictive to assume that $\supp(\mm)=X$. It is also immediate to establish that
\begin{equation}
\label{eq:cdinv}
\begin{split}
&\text{If }(X,\sfd,\mm)\text{ is $\CD^*(K,N)$, then the same is true for $(X,\sfd,c\mm)$ for any }c>0.\\
&\text{If }(X,\sfd,\mm)\text{ is $\CD^*(K,N)$, then for $\lambda>0$ the space $(X,\lambda \sfd,\mm)$ is $\CD^*(\lambda^{-2}K,N)$}.
\end{split}
\end{equation}

On $\CD^*(K,N)$ a natural version of the Bishop-Gromov volume growth estimate holds (see \cite{BS2010} for the precise statement), it follows that  for any given $K\in\R$, $N\in[1,\infty)$ there exists a function $C:(0,\infty)\to(0,\infty)$ depending on $K,N$ such that any $\CD^*(K,N)$-space $(X,\sfd,\mm)$ fulfills \eqref{eq:mmdoubl}. 

In order to avoid the Finsler-like behavior of spaces with a curvature-dimension bound, the $\CD^*(K,N)$ condition may been strengthened  by requiring also that the Banach space $W^{1,2}(X,\sfd,\mm)$ is Hilbert. Such spaces are said to satisfy the \emph{Riemannian $\CD^*(K,N)$} condition  denoted with $\RCD^*(K,N)$.

Now we state three fundamental properties of  $\RCD^*(K,N)$-spaces (the first one is proved in \cite{Gigli12}, the second in  \cite{GMS2013} and  the third in \cite{GigliSplitting}). Let us first introduce the coefficients $\tilde{\sigma}_{K,N}(\cdot):[0,\infty)\to\R$ defined by
 \[
\tilde{\sigma}_{K,N}(\theta):=\left\{
\begin{array}{ll}
\theta \sqrt{\frac{K}{N}} \, {\rm cotan} \left(\theta \sqrt{\frac{K}{N}} \right),&\qquad\textrm{ if }K>0,\\
1 &\qquad\textrm{ if }K=0 ,\\
\theta \sqrt{-\frac{K}{N}} \, {\rm cotanh} \left(\theta \sqrt{-\frac{K}{N}} \right),&\qquad\textrm{ if }K<0.
\end{array}
\right.
\]
Recall that given an open subset $\Omega\subset X$, we say that a Sobolev function $f \in W^{1,2}_{loc}(\Omega, \sfd, \mm\llcorner \Omega )$ is in the domain of the Laplacian and write $f \in \dom(\Delta^\star, \Omega)$, if there exists a Radon measure $\mu$ on $\Omega$ such that for every $\psi\in \Lip(X)\cap L^1(\Omega, |\mu|)$ with compact support in $\Omega$ it holds
$$-\int_{\Omega} \Gamma(f,\psi) \, \d \mm = \int_{\Omega} \psi \, \d \mu \quad. $$
In this case we write $\Delta^\star f |_{\Omega}:=\mu$; to avoid cumbersome notation, if $\Omega=X$ we simply write $\Delta^\star f$. If moreover $\Delta^\star f$ is absolutely continuous with respect to $\mm$ with $L^2_{loc}$ density, we denote by $\Delta f$ the unique function such that: $\Delta^\star f = (\Delta f) \, \mm$, $\Delta f \in L^2_{loc}(X,\mm)$. In this case,  for every $\psi \in W^{1,2}(X,\sfd,\mm)$ with compact support, the  following integration by parts formula holds:
$$-\int_{X} \Gamma(f,\psi) \, \d \mm= \int_{X} \Delta f \, \psi \, \d \mm \quad. $$
Finally, if $\Delta f \in L^{2}(X,\mm)$,  we write $f \in \dom(\Delta)$.

\begin{theorem}[Laplacian comparison for the distance function]\label{thm:LapComp}
Let $(X,\sfd,\mm)$ be an $\RCD^*(K,N)$-space for some $K\in \R$ and $N\in(1,\infty)$. For $x_0\in X$ denote by $\sfd_{x_0} : X \to [0,+\infty)$  the
function $x \mapsto \sfd(x, x_0)$. Then
$$\frac{\sfd^2_{x_0}}{2}\in \dom(\Delta^\star) \quad \text{with} \quad \Delta^\star \frac{\sfd^2_{x_0}}{2} \leq N\, \tilde{\sigma}_{K,N}(\sfd_{x_0}) \,\mm \quad \forall x_0 \in X  $$ 
and
$$\sfd_{x_0} \in \dom(\Delta^\star, X\setminus\{x_0\})  \quad \text{with} \quad \Delta^\star \sfd_{x_0} |_{X\setminus \{x_0\}}\leq \frac{ N \, \tilde{\sigma}_{K,N}(\sfd_{x_0})-1}{\sfd_{x_0}}\, \mm \quad \forall x_0 \in X.   $$
\end{theorem}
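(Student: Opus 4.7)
My plan is to first establish the bound on $\sfd_{x_0}^2/2$ by a first-variation argument along Wasserstein geodesics terminating at $\delta_{x_0}$, and then to deduce the bound on $\sfd_{x_0}$ by a Leibniz computation on $X\setminus\{x_0\}$. The tools available are the $\CD^*(K,N)$ inequality \eqref{eq:CD-def}, essential non-branching of optimal plans in $\RCD^*$-spaces (Rajala-Sturm-Gigli), and the infinitesimally Hilbertian calculus which turns first variations into $\Gamma$-pairings against well-defined gradients.

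For the first bound, fix a nonnegative compactly supported Lipschitz test $\psi$ with $\int\psi\,d\mm = 1$, set $\mu_0 := \psi\,\mm$, and approximate $\delta_{x_0}$ by $\mu_1^r := \mathbf{1}_{B_r(x_0)}\mm/\mm(B_r(x_0))$. Apply \eqref{eq:CD-def} to the unique optimal plan in $\gopt(\mu_0,\mu_1^r)$ and expand to first order at $t=0^+$. On the left, the continuity-equation computation (in the infinitesimally Hilbertian sense, with Kantorovich potential converging to $\sfd_{x_0}^2/2$ as $r\to 0$) gives $\tfrac{d}{dt}\big|_{0^+}\bigl[-\int\rho_t^{1-1/N}d\mm\bigr] = \tfrac{1}{N}\int\psi^{1-1/N}\,d[\Delta^\star(\sfd_{x_0}^2/2)]$. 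On the right, the $\sigma_{K,N}^{(t)}\rho_1^{-1/N}$ term vanishes as $r\to 0$ since $\rho_1^{-1/N}\propto\mm(B_r)^{1/N}\to 0$, while $\tfrac{d}{dt}\big|_0\sigma_{K,N}^{(1-t)}(\theta) = -\tilde{\sigma}_{K,N}(\theta)$ contributes $\int\tilde{\sigma}_{K,N}(\sfd_{x_0})\psi^{1-1/N}\,d\mm$. Combining and multiplying by $N$ yields $\int\psi^{1-1/N}\,d[\Delta^\star(\sfd_{x_0}^2/2) - N\tilde{\sigma}_{K,N}(\sfd_{x_0})\mm] \leq 0$ for all such $\psi$, hence the stated measure-valued inequality.

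For the second bound, on $X\setminus\{x_0\}$ the length-space property together with infinitesimal Hilbertianity give $\Gamma(\sfd_{x_0}) = 1$ $\mm$-a.e., and the Leibniz rule for the measure-valued Laplacian reads
$$\Delta^\star(\sfd_{x_0}^2/2)\big|_{X\setminus\{x_0\}} = \sfd_{x_0}\,\Delta^\star\sfd_{x_0}\big|_{X\setminus\{x_0\}} + \Gamma(\sfd_{x_0})\,\mm = \sfd_{x_0}\,\Delta^\star\sfd_{x_0}\big|_{X\setminus\{x_0\}} + \mm,$$
so dividing by the strictly positive $\sfd_{x_0}$ transfers the first bound into the second. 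The hardest step is the rigorous justification of the first-order expansion when the target is approximated by a Dirac: one needs uniform absolute continuity of $\mu_t^r$ for $t$ bounded away from $1$, existence of suitable Kantorovich potentials with the expected limit, and a nonsmooth continuity equation -- all supplied by Gigli's $\RCD^*$ calculus. An alternative avoiding transport to a Dirac altogether is to invoke the $\RCD^*(K,N)\Leftrightarrow\BE(K,N)$ equivalence (Erbar-Kuwada-Sturm, Ambrosio-Mondino-Savar\'e), test Bochner's inequality against heat-mollifications of $\sfd_{x_0}^2/2$, and pass to the mollification limit to obtain a Riccati-type bound that integrates to the desired comparison.
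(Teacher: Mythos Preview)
The paper does not give its own proof of this statement; it is quoted from \cite{Gigli12} (see the sentence preceding the theorem: ``the first one is proved in \cite{Gigli12}'').  Your proposal is essentially a sketch of Gigli's original argument there: differentiate the R\'enyi entropy at $t=0^+$ along the $W_2$-geodesic from $\psi\mm$ toward $\delta_{x_0}$, compare with the first-order expansion of the right-hand side of the $\CD^*(K,N)$ inequality (using $\tfrac{d}{ds}\big|_{s=1}\sigma^{(s)}_{K,N}(\theta)=\tilde\sigma_{K,N}(\theta)$ and $\rho_1^{-1/N}\to 0$), and pass to $\sfd_{x_0}$ via the chain rule together with $\Gamma(\sfd_{x_0})=1$ a.e.\ on $X\setminus\{x_0\}$.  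So in spirit you are reproducing exactly the proof the paper is citing.

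Two points on the sketch deserve care.  First, the order of logic: you cannot write $\tfrac{d}{dt}\big|_{0^+}\big[-\int\rho_t^{1-1/N}\,\d\mm\big]=\tfrac{1}{N}\int\psi^{1-1/N}\,\d\big[\Delta^\star(\sfd_{x_0}^2/2)\big]$ before you know that $\sfd_{x_0}^2/2\in\dom(\Delta^\star)$.  In Gigli's actual proof one first obtains from the entropy first-variation the one-sided inequality $-\int\Gamma(\sfd_{x_0}^2/2,\,g)\,\d\mm\le\int N\tilde\sigma_{K,N}(\sfd_{x_0})\,g\,\d\mm$ for every nonnegative $g\in\Lip_c(X)$, and \emph{this} inequality is what simultaneously proves membership in $\dom(\Delta^\star)$ and the upper bound (via a Riesz/Daniell-type representation; see \cite[Proposition~4.13--4.14]{Gigli12}).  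Second, the transfer to $\sfd_{x_0}$ is done via the chain rule for the measure-valued Laplacian (also proved in \cite{Gigli12}), not a Leibniz rule; the identity you wrote is correct, but its justification is precisely that chain rule applied to $\phi(t)=t^2/2$, together with $\Gamma(\sfd_{x_0})=1$ $\mm$-a.e., which holds because $(X,\sfd)$ is geodesic.  Finally, the Bochner alternative you mention at the end is a genuinely different route, but be aware that in the existing literature the Laplacian comparison is logically prior to (and in places used in establishing) the full $\RCD^*\Leftrightarrow\BE$ equivalence, so one should check carefully for circularity before invoking it.
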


\begin{theorem}[Stability]\label{thm:stab}
Let $K\in \R$ and $N\in[1,\infty)$. Then the class of normalized p.m.m.s $(X,\sfd,\mm,\bar x)$ such that $(X,\sfd,\mm)$ is $\RCD^*(K,N)$ is closed (hence compact) w.r.t. p-mGH convergence.
\end{theorem}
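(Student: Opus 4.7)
The compactness assertion reduces to the closedness statement once we recall that by the Bishop--Gromov inequality all normalized $\RCD^*(K,N)$-spaces belong to a common class $\MM_{C(\cdot)}$ with $C=C_{K,N}(\cdot)$, so that Proposition \ref{prop:comp} already provides sequential compactness in $\DC$. Hence my plan focuses on proving closedness: given a p-mGH convergent sequence $(X_n,\sfd_n,\mm_n,\bar x_n)\to(X_\infty,\sfd_\infty,\mm_\infty,\bar x_\infty)$ of $\RCD^*(K,N)$-spaces, the limit is again $\RCD^*(K,N)$. The argument splits into two independent parts corresponding to the two defining properties of $\RCD^*$.

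First, I would verify stability of the curvature--dimension condition $\CD^*(K,N)$. After isometrically embedding all the spaces into a common separable $(Z,\sfd_Z)$ given by Definition \ref{def:conv}, approximate any pair of compactly supported absolutely continuous probabilities $\mu_0,\mu_1$ on $X_\infty$ by measures $\mu_0^n,\mu_1^n$ on $X_n$ obtained by composing with the $\vare$-quasi isometries $f_n^{R,\vare}$. Optimal dynamical plans $\ppi_n\in\gopt(\mu_0^n,\mu_1^n)$ are tight on $\Prob(\geo(Z))$, hence a subsequence narrowly converges to some $\ppi\in\gopt(\mu_0,\mu_1)$. The defining inequality \eqref{eq:CD-def} is lower semicontinuous under narrow convergence of plans and weak convergence of the pushforwards $\rho_t\mm$, because $\sigma^{(t)}_{K,N'}$ is continuous and the Renyi-type entropy is lower semicontinuous; therefore \eqref{eq:CD-def} passes to the limit. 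This yields $\CD^*(K,N)$ for the limit space.

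Second, I would show stability of infinitesimal Hilbertianity. The decisive ingredient is the Mosco convergence of Cheeger energies along p-mGH convergent sequences of uniformly doubling metric measure spaces, established in \cite{GMS2013}. Concretely, pushing forward each $\Ch_n$ through the embedding $\iota_n:X_n\hookrightarrow Z$ and extending by $+\infty$ outside $L^2(Z,(\iota_n)_\sharp\mm_n)$, one obtains functionals on a common ambient $L^2$-type space with respect to the natural notion of strong/weak $L^2$-convergence of functions defined on varying measures; then $\Ch_n$ $\Gamma$-converges to $\Ch_\infty$ with both a $\liminf$ inequality along weak convergence and the existence of recovery sequences. Since each $\Ch_n$ satisfies the parallelogram identity
\[
\Ch_n(f+g)+\Ch_n(f-g)=2\Ch_n(f)+2\Ch_n(g),
\]
choosing recovery sequences $f_n\to f$, $g_n\to g$ strongly in $L^2$ with $\Ch_n(f_n)\to\Ch_\infty(f)$, $\Ch_n(g_n)\to\Ch_\infty(g)$ and using the $\liminf$ inequality applied to $f_n\pm g_n$ gives the same identity for $\Ch_\infty$. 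By polarization, $\Ch_\infty$ is a quadratic form, i.e. $W^{1,2}(X_\infty,\sfd_\infty,\mm_\infty)$ is Hilbert. Combined with the previous step, $(X_\infty,\sfd_\infty,\mm_\infty)$ is $\RCD^*(K,N)$, and compactness of the class in $(\MM_{C(\cdot)},\DC)$ follows from Proposition \ref{prop:comp}.

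The main technical obstacle is formalizing the Mosco convergence of the Cheeger energies when the underlying metric measure spaces vary with $n$: one has to fix a notion of strong/weak $L^2$-convergence of functions defined on distinct $(X_n,\mm_n)$ which is both compatible with the p-mGH structure and strong enough to preserve quadratic forms. The embedding framework of \cite{GMS2013}, together with the uniform doubling bound inherited from $\CD^*(K,N)$, is tailored exactly for this purpose; once the Mosco convergence is in place, the stability of the parallelogram identity (hence of infinitesimal Hilbertianity) is essentially formal.
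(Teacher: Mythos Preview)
The paper does not actually prove this theorem: it is stated as one of three ``fundamental properties'' and attributed to \cite{GMS2013}, so there is no proof in the paper to compare against. Your outline is consistent with the approach of that reference---stability of $\CD^*(K,N)$ via tightness and lower semicontinuity of the entropy along optimal dynamical plans, and stability of infinitesimal Hilbertianity via Mosco/$\Gamma$-convergence of the Cheeger energies---and you correctly identify the Mosco convergence from \cite{GMS2013} as the key technical input.

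One small caveat on your argument for the parallelogram identity: from recovery sequences for $f$ and $g$ separately and the $\liminf$ inequality for $f_n\pm g_n$ you only get
\[
\Ch_\infty(f+g)+\Ch_\infty(f-g)\leq 2\Ch_\infty(f)+2\Ch_\infty(g),
\]
not equality. To conclude you must also apply this inequality with $f,g$ replaced by $\tfrac{f+g}{2},\tfrac{f-g}{2}$ (or, equivalently, use recovery sequences for $f\pm g$ and the $\liminf$ for $f_n,g_n$) to obtain the reverse inequality; alternatively one can use the full Mosco convergence more carefully as in \cite{GMS2013}. This is a routine fix, but as written the step is incomplete.
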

\begin{theorem}[Splitting]\label{thm:splitting}
 Let $(X,\sfd,\mm)$ be an $\RCD^*(0,N)$-space with $1 \le N < \infty$. Suppose that
 $\supp(\mm)$ contains a line. Then $(X,\sfd,\mm)$ is isomorphic to $(X'\times \R, \sfd'\times \sfd_E,\mm'\times \LL_1)$,
where $\sfd_E$ is the Euclidean distance, $\LL_1$ the Lebesgue measure and  $(X',\sfd',\mm')$ is an $\RCD^*(0,N-1)$-space if $N \ge 2$ and a singleton if $N < 2$.
\end{theorem}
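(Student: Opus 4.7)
The plan is to adapt the Cheeger--Gromoll strategy to the non-smooth $\RCD^*(0,N)$ setting, following Gigli \cite{GigliSplitting}. Fix a line $\gamma:\R\to\supp(\mm)$ and introduce the two Busemann functions
$$b^\pm(x):=\lim_{t\to\infty}\bigl(t-\sfd(x,\gamma(\pm t))\bigr).$$
The limits exist and $b^\pm$ are $1$-Lipschitz by monotonicity and the triangle inequality, and $b^++b^-\leq 0$ everywhere with equality along $\gamma$. Applying the Laplacian comparison (Theorem \ref{thm:LapComp}) to $\sfd_{\gamma(\pm t)}$ with $K=0$ gives $\Delta^\star\sfd_{\gamma(\pm t)}\leq\frac{N-1}{\sfd_{\gamma(\pm t)}}\,\mm$; passing to the limit in an appropriate weak sense yields $\Delta^\star b^\pm\geq 0$, so $b^\pm$ are subharmonic. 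The sum $b^++b^-$ is then a nonpositive subharmonic function attaining its maximum along $\gamma$; the strong maximum principle available on $\RCD^*(0,N)$-spaces (via the regularizing heat flow and connectedness of $\supp(\mm)$) forces $b^++b^-\equiv 0$, so that $b:=b^+=-b^-$ is harmonic, i.e.\ $\Delta^\star b=0$.

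The next step is to show that $|\nabla b|_w=1$ $\mm$-a.e.\ and that the Hessian of $b$ vanishes. The inequality $|\nabla b|_w\leq 1$ is immediate from \eqref{eq:19} and the $1$-Lipschitz bound. For the reverse, the restriction $t\mapsto b(\gamma(t))=t+b(\gamma(0))$ has unit rate, propagated to a full neighborhood by subharmonicity and a Kato-type bound coming from Bochner. The Bakry--\'Emery $\BE(0,N)$ inequality, equivalent to $\RCD^*(0,N)$ by \cite{EKS2013, AMS2013}, then yields, for $b$ harmonic with $|\nabla b|_w^2\equiv 1$,
$$0=\tfrac{1}{2}\Delta^\star|\nabla b|_w^2\geq \Gamma(\nabla b,\nabla\Delta b)\,\mm + \tfrac{(\Delta b)^2}{N}\,\mm=0,$$
so Bochner holds as an identity along $b$. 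In its sharper self-improved form, available in the $\RCD^*$ calculus, this equality forces $\Hess b\equiv 0$ as an element of Gigli's tangent module.

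Finally, the rigidity $|\nabla b|_w=1$ together with $\Hess b=0$ makes the gradient flow $F_t$ of $\nabla b$ a one-parameter group of measure-preserving isometries of $(X,\sfd,\mm)$, constructed via the theory of Regular Lagrangian Flows of Sobolev vector fields on $\RCD^*$-spaces. Setting $X':=b^{-1}(0)$ equipped with the induced intrinsic distance and with $\mm'$ obtained by disintegrating $\mm$ along the level sets of $b$, the map $(x',t)\mapsto F_t(x')$ realizes the claimed isomorphism $(X'\times\R,\sfd'\times\sfd_E,\mm'\times\LL_1)\simeq(X,\sfd,\mm)$. The stability Theorem \ref{thm:stab}, applied to small rescalings of the product structure, together with the tensorization behaviour of the Curvature-Dimension condition, shows that $(X',\sfd',\mm')$ is $\RCD^*(0,N-1)$ when $N\geq 2$; for $1\leq N<2$ the dimensional Bishop--Gromov inequality forces $X'$ to be a single point. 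The main obstacle is the last transition: passing from ``$\Hess b=0$ in the weak $\RCD^*$ sense'' to ``the gradient flow of $b$ produces a genuine isometric product splitting'' is a routine ODE argument in the smooth setting, but in the metric framework it requires the full machinery of Gigli's tangent/cotangent modules, existence and uniqueness of Regular Lagrangian Flows, and a delicate verification that the quotient space inherits the $\RCD^*$ curvature-dimension bound with the correct dimensional parameter.
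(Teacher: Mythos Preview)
The paper does not prove this theorem; it is quoted from Gigli \cite{GigliSplitting}. However, the paper does summarize Gigli's strategy in the proof of Lemma \ref{l:splitting}, and your sketch follows a genuinely different route from that summary.

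In Gigli's original argument (as the paper describes it), the three ingredients are: harmonicity of $b$, the fact that $ab$ is a $c$-concave Kantorovich potential for every $a\in\R$, and $|Db|\equiv 1$. The $c$-concavity is what allows one to \emph{define} the gradient flow $\Phi_t$ of $b$ via optimal transport, without any Hessian or RLF machinery; harmonicity gives that $\Phi_t$ is measure-preserving; and the combination of harmonicity with $|Db|=1$ is then used to show that the pullback $\Phi_t^*:W^{1,2}(X)\to W^{1,2}(X)$ is an isometry of Hilbert spaces, which forces $\Phi_t$ to be a metric isometry and yields the product structure. No Hessian, no tangent module, no Regular Lagrangian Flow appears.

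Your route---Bochner with equality $\Rightarrow$ $\Hess b=0$ in the tangent module $\Rightarrow$ the RLF of $\nabla b$ is an isometric flow---is closer in spirit to the smooth Cheeger--Gromoll proof, but it leans on second-order calculus on $\RCD^*$ spaces (Gigli's tangent/cotangent modules, the improved Bochner with the full Hessian term, and the Ambrosio--Trevisan RLF theory) that was developed \emph{after} \cite{GigliSplitting} and is considerably heavier than what is needed. A couple of your intermediate steps are also underspecified: the passage ``propagated to a full neighbourhood by subharmonicity and a Kato-type bound'' for $|\nabla b|_w=1$ is not how this is actually obtained (one shows directly that through every point there is a line along which $b$ is affine with slope $1$), and invoking the stability Theorem \ref{thm:stab} on ``small rescalings of the product'' to get the $\RCD^*(0,N-1)$ bound on $X'$ is not correct---the curvature-dimension bound on the quotient is obtained from tensorization/factorization arguments for the Cheeger energy, not from stability.
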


Notice that for the particular case $K=0$ the $\CD^*(0,N)$ condition is the same as the $\CD(0,N)$ one. Also, in the statement of the splitting theorem, by line we intend an isometric embedding of $\R$.

Observe that Theorem \ref{thm:stab} and properties \eqref{eq:cdinv} ensure that for any $K,N$ we have that
\begin{equation}
\label{eq:tancd}
\begin{split}
&\text{If }(X,\sfd,\mm)\text{ is an  $\RCD^*(K,N)$-space and $x\in X$ then}\\
&\text{ every $(Y,\sfd,\nn,y)\in\Tan(X,\sfd,\mm,x)$ is  $\RCD^*(0,N)$.}
\end{split}
\end{equation}
By iterating Theorem \ref{thm:stab} and Theorem \ref{thm:splitting},  in \cite{GMR2013} the following result has been established.
\begin{theorem}[Euclidean Tangents]\label{thm:euclideantangents}
 Let $K \in \R$, $1 \le N < \infty$ and  $(X,\sfd,\mm)$  a $\RCD^*(K,N)$-space.
 Then at $\mm$-almost every $x \in X$ there exists $k \in \N$, $1\le k \le N$, such that 
 $$
  (\R^k,\sfd_{E},\LL_k,0) \in \Tan(X,\sfd,\mm,x),  
 $$
 where $\sfd_{E}$ is the Euclidean distance and $\LL_k$ is the $k$-dimensional
 Lebesgue measure normalized so that $\int_{B_1(0)}1-|x|\,\d\LL_k(x)=1$.
\end{theorem}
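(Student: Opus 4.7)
The plan is to produce, at $\mm$-a.e.\ $x\in X$, a Euclidean tangent by iteratively applying the Splitting Theorem~\ref{thm:splitting} to tangent cones. The argument rests on three ingredients: \emph{(i)} every tangent is $\RCD^*(0,N)$, by \eqref{eq:tancd}; \emph{(ii)} tangents of tangents are tangents, at $\mm$-a.e.\ base-point; \emph{(iii)} blowing up a non-trivial $\RCD^*(0,N)$-space at a point different from its base-point yields a tangent containing a line.

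First I would establish (ii) as a Preiss-type self-similarity of $\Tan(X,\sfd,\mm,x)$: there is an $\mm$-full $G\subset X$ such that for every $x\in G$, every $(Y,\sfd_Y,\nn,y)\in\Tan(X,\sfd,\mm,x)$, and every $y'\in\supp(\nn)$, one has $\Tan(Y,\sfd_Y,\nn,y')\subset\Tan(X,\sfd,\mm,x)$. This is extracted from a diagonal argument: given scales $r_i\downarrow 0$ realising $Y$ as a tangent of $X$ at $x$, and $s_j\downarrow 0$ realising some $Z\in\Tan(Y,\sfd_Y,\nn,y')$, one uses the $\varepsilon$-quasi-isometries of the equivalent formulation of p-mGH convergence to rescale $X$ at $x$ by the mixed scale $r_{i_j}s_j$ about an approximate pre-image of $y'$. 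Compactness of $(\MM_{C(\cdot)},\DC)$ from Proposition~\ref{prop:comp} plus joint p-mGH continuity of the normalisation~\eqref{eq:normalization} legitimate the diagonal extraction on an $\mm$-full subset.

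Next I would prove (iii). Fix $x\in G$, a non-trivial $(Y,\sfd_Y,\nn,y)\in\Tan(X,\sfd,\mm,x)$, and $y'\in\supp(\nn)$ with $\ell:=\sfd_Y(y,y')>0$; let $(Z,\sfd_Z,\mu,z)\in\Tan(Y,\sfd_Y,\nn,y')$ arise along scales $s_i\downarrow 0$. In the rescaled metric $s_i^{-1}\sfd_Y$ the minimising geodesic from $y'$ to $y$ has length $\ell/s_i\to\infty$, so Arzel\`a--Ascoli yields a ray in $Z$ emanating from $z$. To upgrade the ray to a line one uses that for $\nn$-a.e.\ $y'$ this geodesic admits a local extension past $y'$ of uniform positive length --- a consequence of the essentially non-branching character of $\RCD^*$-geodesics combined with the Laplacian comparison of Theorem~\ref{thm:LapComp} applied to $\sfd_y$ near $y'$. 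In the blow-up metric this extension also becomes arbitrarily long, so the limit curve is bi-infinite. Theorem~\ref{thm:splitting} then gives $Z\cong\R\times Z'$ with $Z'$ either $\RCD^*(0,N-1)$ or a point, and (ii) ensures $Z\in\Tan(X,\sfd,\mm,x)$.

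Finally, I would iterate. If $Z'$ is a point, then $Z\cong\R$ and after the measure rescaling prescribed by~\eqref{eq:normalization} we obtain the conclusion with $k=1$. Otherwise, applying (ii) to $Z$ and then (iii) to the $Z'$-factor peels off another line and yields a tangent of the form $\R^2\times Z''\in\Tan(X,\sfd,\mm,x)$; iteration drops the dimension parameter by $1$ at each step via Theorem~\ref{thm:splitting}, so the process terminates in at most $\lfloor N\rfloor$ steps and produces $(\R^k,\sfd_E,\LL_k,0)\in\Tan(X,\sfd,\mm,x)$ for some $1\le k\le N$. The hard part will be the line-production argument in (iii) --- guaranteeing that the limit curve in $Z$ is genuinely bi-infinite rather than two rays joined at $z$ --- which is the conceptually subtle point and relies on the non-branching plus comparison ingredients sketched above; step (ii) is a careful but routine Preiss-style diagonal argument depending only on the doubling bound and Proposition~\ref{prop:comp}.
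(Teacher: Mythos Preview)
Your overall architecture is exactly the one the paper attributes to \cite{GMR2013}: tangents are $\RCD^*(0,N)$ by \eqref{eq:tancd}, the Preiss--Le Donne iterated-tangent principle (Theorem~\ref{thm:iteratedtangents}) lets you stay inside $\Tan(X,\sfd,\mm,x)$, and repeated application of the Splitting Theorem~\ref{thm:splitting} peels off $\R$-factors until the dimension budget is exhausted. So steps (i), (ii) and the iteration are fine and essentially verbatim what the paper sketches.

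The gap is in your line-production step (iii). You blow up $Y$ at a point $y'$ chosen only to satisfy $\sfd_Y(y,y')>0$, obtain a ray from the geodesic $y'\to y$, and then try to manufacture the second ray by asserting that ``for $\nn$-a.e.\ $y'$ this geodesic admits a local extension past $y'$'', invoking essential non-branching and the Laplacian comparison. Neither of those ingredients yields geodesic extendability: essential non-branching is a uniqueness statement for geodesics that already overlap, not an existence statement for extensions, and Theorem~\ref{thm:LapComp} gives an upper bound on $\Delta^\star\sfd_y$ but says nothing about continuing a minimiser beyond its endpoint. Extendability of geodesics in $\RCD^*$-spaces is genuinely delicate (it can fail at ``boundary'' points), and at any rate is not needed here.

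The fix is the one the paper itself uses (see the proof of Lemma~\ref{lem:AkEq}): since $Y$ is a non-trivial geodesic space, pick any geodesic $\gamma:[0,1]\to Y$ joining two distinct points and set $y':=\gamma(1/2)$. Then $y'$ is an \emph{interior} point of $\gamma$, so under any rescaling $s_i^{-1}\sfd_Y$ the two arcs $\gamma|_{[0,1/2]}$ and $\gamma|_{[1/2,1]}$ have lengths $\to\infty$ and concatenate to a genuine line in every blow-up limit $Z$ at $y'$. No extension argument is required, and the Splitting Theorem applies directly. With this correction your proof goes through and coincides with the paper's cited argument.
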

Let us remark that the normalization of the limit measure expressed in the statement plays little role  and depends only on the choice of renormalization of rescaled measures in the process of taking limits. Let us also mention  that a fundamental ingredient in the proof of Theorem \ref{thm:euclideantangents} was a crucial idea of Preiss \cite{P1987} (adapted to doubling metric spaces by Le Donne \cite{LD2011} and to doubling metric measure spaces in \cite{GMR2013})
stating that  ``tangents of tangents are tangents'' almost everywhere. We report here the statement (see \cite[Theorem 3.2]{GMR2013} for the proof) since it will be useful also in this work.
\begin{theorem}[``Tangents of tangents are tangents'']\label{thm:iteratedtangents}
Let  $(X,\sfd,\mm)$ be a m.m.s. satisfying \eqref{eq:mmdoubl} for some $C:(0,\infty)\to(0,\infty)$.

Then for  $\mm$-a.e.  $x \in X$ the following holds:  for any $(Y,\sfd_Y,\nn,y) \in \Tan(X,\sfd,\mm,x)$ and any $y' \in Y$ we have
 \[
  \Tan(Y,\sfd_Y,\nn^{y'}_1,y') \subset \Tan(X,\sfd,\mm,x),
 \]
 the measure $\nn^{y'}_1$ being defined as in \eqref{eq:normalization}.
\end{theorem}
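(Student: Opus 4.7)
I would follow the Preiss-type scheme, in its metric-measure incarnation due to Le Donne \cite{LD2011} and \cite{GMR2013}. Throughout I work in the compact metric space $(\MM_{C(\cdot)},\mathcal D_{C(\cdot)})$ of Proposition \ref{prop:comp}. Writing $\Phi_r(x):=(X,r^{-1}\sfd,\mm^x_r,x)\in\MM_{C(\cdot)}$ for $r\in(0,1)$, the set $\tau(x):=\Tan(X,\sfd,\mm,x)$ is exactly the cluster set of $r\mapsto\Phi_r(x)$ as $r\downarrow 0$. Since $\Phi_r$ is $\sfd$-continuous in $x$ for each fixed $r>0$, the multifunction $\tau$ is Borel-measurable into the compact hyperspace of closed subsets of $\MM_{C(\cdot)}$ equipped with the Hausdorff distance.

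The first main step is a diagonal extraction. Given $\bar x\in X$, a tangent $(Y,\sfd_Y,\nn,y)\in\tau(\bar x)$ realized along $r_i\downarrow 0$, a point $y'\in Y$, and an iterated tangent $(Z,\sfd_Z,\mu,z)\in\Tan(Y,\sfd_Y,\nn^{y'}_1,y')$ realized along $s_j\downarrow 0$, I use the $\vare$-quasi-isometry characterization of p-mGH convergence to select lifts $x_i\in X$ of $y'$ with $r_i^{-1}\sfd(x_i,\bar x)\to\sfd_Y(y,y')$. A standard diagonal extraction on the scales $t_i:=r_is_{j(i)}\downarrow 0$ then gives $\Phi_{t_i}(x_i)\to(Z,\sfd_Z,\mu,z)$ in $(\MM_{C(\cdot)},\mathcal D_{C(\cdot)})$. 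The role of the renormalization $\nn^{y'}_1$ in the statement is precisely to make the two-step rescaling $\mm\mapsto(\mm^{x_i}_{r_i})^{y'}_{s_j}$ agree, up to a factor absorbed by the normalization convention \eqref{eq:normalization}, with the single rescaling $\mm\mapsto\mm^{x_i}_{t_i}$; so the bookkeeping is automatic.

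The main obstacle is that $(Z,\sfd_Z,\mu,z)$ is realized at the moving basepoints $x_i$ rather than at $\bar x$ itself. To transfer it to a tangent based at $\bar x$, I would use a Lusin--Lebesgue density argument. Apply Lusin's theorem to the Borel multifunction $\tau$: for every $n\in\N$ there is a closed set $K_n\subset X$ with $\mm(X\setminus K_n)<1/n$ on which $\tau$ is Hausdorff-continuous. The doubling assumption \eqref{eq:mmdoubl} makes Lebesgue differentiation available, so that $\mm$-a.e.\ $\bar x$ is a density point of some $K_n$. The freedom allowed by the $\vare$-quasi-isometry definition of the lifts lets one perturb $x_i$ to a nearby $\tilde x_i\in K_n$ without destroying the diagonal convergence; Hausdorff-continuity of $\tau|_{K_n}$ together with the closedness of $\tau(\bar x)$ then force $(Z,\sfd_Z,\mu,z)\in\tau(\bar x)$, and a countable union in $n$ removes the dependence on the auxiliary set.

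The delicate point, and the real content of Preiss's argument, lies in the perturbation step: we need $\tilde x_i\in K_n$ with $\sfd(\tilde x_i,x_i)=o(t_i)$ (rather than just $o(r_i)$), because the relevant convergence sits at the much finer scale $t_i\ll r_i$. This requires density of $K_n$ not only at $\bar x$ but also at the lifts $x_i$ at scale $t_i$. I would handle it by noting that the quasi-isometry pins down $x_i$ only up to an error in a whole ball of positive $\mm$-measure, and by invoking a Vitali-type covering to select lifts in a good subset on which the finer-scale density control holds simultaneously. This is the step that carves out the $\mm$-negligible exceptional set.
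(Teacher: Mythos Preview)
The paper does not actually prove this theorem; it quotes the statement and defers to \cite[Theorem~3.2]{GMR2013}. Your overall plan is the Preiss--Le~Donne scheme that the cited reference follows, and your identification of the scale mismatch $\sfd(x_i,\bar x)/t_i\to\infty$ as the crux is correct. There is, however, a genuine gap in the Lusin step. Even granting $\tilde x_i\in K_n$, $\tilde x_i\to\bar x$, and $\Phi_{t_i}(\tilde x_i)\to Z$, Hausdorff-continuity of $\tau$ on $K_n$ only gives $\tau(\tilde x_i)\to\tau(\bar x)$; to conclude $Z\in\tau(\bar x)$ you would still need $Z$ to be close to $\tau(\tilde x_i)$ for large $i$, and this does not follow. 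The set $\tau(\tilde x_i)$ is the cluster set of $r\mapsto\Phi_r(\tilde x_i)$ as $r\downarrow 0$ with $\tilde x_i$ \emph{fixed}, whereas you only control the single diagonal scale $t_i$ varying with $i$; a rescaling at one positive scale can be far from every tangent at that point. Your proposed patch --- extra fine-scale density of $K_n$ near the lifts --- does not address this: it helps you land $\tilde x_i$ in $K_n$, but says nothing about whether $\Phi_{t_i}(\tilde x_i)$ is near $\tau(\tilde x_i)$.

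The argument in \cite{LD2011,GMR2013} avoids the problem by not diagonalizing in $s$. One first proves, via a Vitali covering over a countable dense family of target spaces, that for $\mm$-a.e.\ $\bar x$ and every fixed $C>0$, any limit of $\Phi_{\rho_i}(x_i)$ with $\rho_i\downarrow 0$ and $\sfd(x_i,\bar x)\le C\rho_i$ already lies in $\tau(\bar x)$. One then applies this at each \emph{fixed} $s\in(0,1)$ with $\rho_i:=sr_i$: since $\sfd(x_i,\bar x)\approx\sfd_Y(y,y')\,r_i=(\sfd_Y(y,y')/s)\,\rho_i$, the ratio is bounded, and one obtains $(Y,s^{-1}\sfd_Y,\nn^{y'}_s,y')\in\tau(\bar x)$ for every $s$. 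Since $\tau(\bar x)$ is closed in the compact space $\MM_{C(\cdot)}$, letting $s\downarrow 0$ delivers $\Tan(Y,\sfd_Y,\nn^{y'}_1,y')\subset\tau(\bar x)$. The two-step structure --- dilations-with-recentering first, then closure --- is precisely what makes the bounded-ratio hypothesis available and sidesteps the difficulty you encountered.
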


\subsection{Convergence of functions defined on varying spaces}\label{SubSec:ConvFunct} 
In this subsection we recall some basic facts about the convergence of functions defined on m.m.s.  which are themselves converging to a limit space (for more material the interested reader is referred to \cite{GMS2013} and the references therein).  

Let $(X_j, \sfd_j, \mm_j, \bar{x}_j)$ be a sequence of p.m.m.s.  in $\MM_{C(\cdot)}$, for some nondecreasing $C(\cdot):(0,+\infty)\to (0,+\infty)$,  p-mGH converging to  a limit p.m.m.s $(X_\infty, \sfd_\infty, \mm_\infty, \bar{x}_\infty)$. Following Definition \ref{def:conv}, let  $(Z,\sfd_Z)$ be an ambient Polish  metric space and let $\iota_j:(X_j, \sfd_j) \to (Z,\sfd_Z)$, $j \in \N\cup \{\infty\}$ be isometric immersions realizing the convergence.  
First we define  pointwise and uniform convergence of functions defined on varying spaces.

\begin{definition}[Pointwise and uniform convergence of functions defined on varying spaces]
Let $(X_j, \sfd_j, \mm_j, \bar{x}_j)$, $j\in \N\cup \{\infty\}$, be a p-mGH converging sequence of p.m.m.s. as above and let   $f_j:X_j \to \R$, $j \in \N \cup \{\infty\}$, be  a sequence of functions. We say that $f_j\to f_\infty$ \emph{pointwise}  if 
\begin{equation}\label{eq:DefPointConv}
f_j(x_j)\to f_\infty(x_\infty) \quad \text{for every sequence of points $x_j \in X_j$ such that } \iota_j(x_j)\to \iota_\infty(x_\infty) .
\end{equation}
If moreover for every $\vare>0$ there exists $\delta>0$ such that 
\begin{equation}\label{eq:DefPointConv}
|f_j(x_j)- f_\infty(x_\infty)| \leq \vare \quad \text{ for every $j\geq \delta^{-1}$ and every   $x_j \in X_j, x_\infty \in X_\infty$ with  $\sfd_{Z}(\iota_j(x_j),\iota_\infty(x_\infty)) \leq \delta$\quad,    } 
\end{equation}
then we say that $f_j\to f_\infty$ \emph{uniformly}. 
\end{definition}

By using the separability of the metric spaces, one can repeat the classic proof of Arzel\'a-Ascoli Theorem based on extraction of  diagonal subsequences  and get the following proposition.

\begin{proposition} [Arzel\'a-Ascoli Theorem for varying spaces] \label{Prop:AscArz}
Let $(X_j, \sfd_j, \mm_j, \bar{x}_j), j\in \N\cup \{\infty\}$, be a p-mGH converging sequence of proper p.m.m.s. as above and let   $f_j:X_j \to \R$, $j \in \N$, be  a sequence of $L$-Lipschitz   functions, for some uniform $L \geq 0$, which satisfy $\sup_{j \in \N} |f_j(\bar{x_j})|< \infty$.  Then there exists a limit $L$-Lipschitz function $f_\infty: X_\infty \to \R$ such that, up to subsequences,  $f_j|_{B_R(\bar{x}_j)} \to f_\infty|_{B_R(\bar{x}_\infty)}$ uniformly for every $R>0$. 
\end{proposition}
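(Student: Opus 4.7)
The plan is to mimic the classical Arzelà-Ascoli argument, carried out inside the common ambient Polish space $(Z,\sfd_Z)$ furnished by the p-mGH convergence. The two ingredients that replace the usual hypotheses are: (i) $(\iota_j(\supp(\mm_j)),\sfd_Z)$ converges to $(\iota_\infty(\supp(\mm_\infty)),\sfd_Z)$ in the pointed Hausdorff sense on bounded sets, which plays the role of domain precompactness; (ii) the uniform $L$-Lipschitz condition together with $\sup_j |f_j(\bar x_j)|<\infty$ yields the uniform bound $|f_j(x)|\leq \sup_j|f_j(\bar x_j)|+LR$ for every $x\in B_R(\bar x_j)$, which plays the role of equi-boundedness on bounded sets.

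First I would fix a countable dense set $\{y_k\}_{k\in\N}\subset X_\infty$ and, for each $k$, pick a sequence $y_k^j\in X_j$ with $\iota_j(y_k^j)\to\iota_\infty(y_k)$ in $Z$; such sequences exist since, for every $R>y_k$'s distance from $\bar x_\infty$, the p-mGH convergence guarantees that $\iota_\infty(B_R^{X_\infty}(\bar x_\infty))$ is contained in the $\varepsilon$-neighbourhood of $\iota_j(B_R^{X_j}(\bar x_j))$ for large $j$. By the uniform boundedness on balls, the sequence $\{f_j(y_k^j)\}_{j\in\N}$ is bounded in $\R$ for every fixed $k$, so a standard diagonal extraction produces a subsequence (still denoted $j$) and numbers $a_k\in\R$ with $f_j(y_k^j)\to a_k$ as $j\to\infty$, for every $k$. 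The $L$-Lipschitz bound $|f_j(y_k^j)-f_l^{(j)}|\leq L\sfd_j(y_k^j,y_l^j)$ and the isometric nature of the embeddings force $|a_k-a_l|\leq L\sfd_\infty(y_k,y_l)$; hence $k\mapsto a_k$ extends uniquely to an $L$-Lipschitz function $f_\infty:X_\infty\to\R$.

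Next I would verify pointwise convergence in the sense of the definition. Given $x_j\in X_j$ with $\iota_j(x_j)\to\iota_\infty(x_\infty)$, approximate $x_\infty$ by some $y_k$ with $\sfd_\infty(x_\infty,y_k)<\eta$; using the triangle inequality in $Z$ together with the embedding isometry, $\sfd_j(x_j,y_k^j)\leq 2\eta$ for all large $j$, so
\[
|f_j(x_j)-f_\infty(x_\infty)|\leq |f_j(x_j)-f_j(y_k^j)|+|f_j(y_k^j)-a_k|+|a_k-f_\infty(x_\infty)|\leq 2L\eta+o(1)+L\eta,
\]
and letting first $j\to\infty$ and then $\eta\to 0$ gives the claim. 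Uniform convergence on $B_R(\bar x_\infty)$ follows from the same estimate by a finite-cover argument: since $\overline{B_R(\bar x_\infty)}$ is totally bounded (the $\mm_\infty$-support inherits the doubling property through the convergence, and balls are proper), cover it by finitely many $\eta$-balls centred at some $y_{k_1},\dots,y_{k_m}$ and apply the previous estimate at those finitely many centres.

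The main technical obstacle I would expect is not in the diagonal extraction itself but in the bookkeeping needed to show that the pointwise limit is independent of the choice of approximating sequences $x_j\to x_\infty$ in $Z$, and that the estimate controlling $\sfd_j(x_j,y_k^j)$ really survives the passage through the (non-canonical) ambient embedding; both issues are resolved by the uniform $L$-Lipschitz bound, which transfers metric closeness in $Z$ to closeness of values. Properness of the $X_j$ and of $X_\infty$ is used to guarantee that balls are compact and totally bounded, which is what enables the finite covers used to promote pointwise to uniform convergence on each $B_R$.
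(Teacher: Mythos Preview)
Your proposal is correct and follows precisely the approach the paper indicates: the paper does not give a detailed proof but simply remarks that ``by using the separability of the metric spaces, one can repeat the classic proof of Arzel\`a-Ascoli Theorem based on extraction of diagonal subsequences,'' which is exactly what you carry out. Aside from a minor notational slip (you wrote $f_l^{(j)}$ where you meant $f_j(y_l^j)$), the argument is sound; note also that for the uniform-convergence step you do not need to invoke doubling, since properness of $X_\infty$ is assumed directly and already gives total boundedness of closed balls.
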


By recalling that $\RCD^*(K,N)$-spaces satisfy doubling \& Poincar\'e with constant depending just on $K,N$ and moreover, since $W^{1,2}$ is Hilbert (so in particular reflexive),  one can repeat the proof of the lower semicontinuity of the slope given in \cite[Theorem 8.4]{ACD}, see also the previous work of Cheeger \cite{Cheeger97}, in order to obtain the following variant for p-mGH converging spaces.

\begin{proposition}[Lower semicontuity of the slope in $\RCD^*(K,N)$-spaces] \label{Prop:LSCSlope}

Let $(X_j, \sfd_j, \mm_j, \bar{x}_j)$, $j\in \N\cup \{\infty\}$, be a p-mGH converging sequence of $\RCD^*(K,N)$-spaces as above and  let   $f_j:X_j \to \R$, $j \in \N\cup \{\infty\}$, be  a sequence of locally Lipschitz   functions such that  $f_j|_{B_R(\bar{x_j})} \to f_\infty|_{B_R(\bar{x}_\infty)}$ uniformly for some $R>0$.

Then, for every $0<r<R$ one has
\begin{equation}\label{eq:lscSlope}
\int_{B_r(\bar{x}_\infty)} |Df_\infty|^2 \, \d \mm_\infty \leq \liminf_{j\to \infty}  \int_{B_r(\bar{x}_j )} |Df_j|^2 \, \d \mm_j\quad. 
\end{equation} 
\end{proposition}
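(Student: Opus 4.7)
The plan is to adapt the classical $L^2$-weak lower semicontinuity proof of the Cheeger energy on a fixed doubling and Poincar\'e space (cf.\ \cite[Theorem~8.4]{ACD} and \cite{Cheeger97}) to the p-mGH varying-space setting, using the $L^2$-convergence theory on sequences of metric measure spaces from \cite{GMS2013}. The three ingredients making this possible are: every $\RCD^*(K,N)$-space enjoys uniform doubling and weak $(1,2)$-Poincar\'e with constants depending only on $K$ and $N$; Cheeger's theorem identifies the pointwise slope $|Df_j|$ with the minimal weak upper differential $|Df_j|_w$ on the locally Lipschitz set; and infinitesimal Hilbertianity provides reflexivity of the Sobolev spaces involved.

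First I would assume that the liminf $L:=\liminf_j \int_{B_r(\bar x_j)}|Df_j|^2\,\d\mm_j$ is finite, and extract a (not relabelled) subsequence along which this limit is attained. Setting $g_j:=|Df_j|\,\chi_{B_r(\bar x_j)}$ and using isometric embeddings $\iota_j:X_j\hookrightarrow (Z,\sfd_Z)$ realising the p-mGH convergence, the bound $\sup_j\int g_j^2\,\d\mm_j<\infty$ allows one to extract, by $L^2$-weak compactness in the sense of varying spaces as in \cite[\S 6]{GMS2013}, a further subsequence such that $g_j$ weakly $L^2$-converges to a function $g_\infty\in L^2(\mm_\infty)$ supported in $\overline{B_r(\bar x_\infty)}$ and satisfying $\int g_\infty^2\,\d\mm_\infty\leq L$. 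The core task is then to prove the pointwise domination
\begin{equation*}
|Df_\infty|_w\leq g_\infty\qquad \mm_\infty\text{-a.e.\ in } B_r(\bar x_\infty),
\end{equation*}
after which integration and the bound on $g_\infty$ conclude.

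To establish this domination I would exploit the uniform weak $(1,2)$-Poincar\'e inequality: for every $y_j\in X_j$ and every ball $B_s(y_j)\subset B_r(\bar x_j)$ one has
\begin{equation*}
\frac{1}{\mm_j(B_s(y_j))}\int_{B_s(y_j)}\bigl|f_j-(f_j)_{B_s(y_j)}\bigr|\,\d\mm_j \leq C(K,N)\,s\left(\frac{1}{\mm_j(B_{\lambda s}(y_j))}\int_{B_{\lambda s}(y_j)} g_j^2\,\d\mm_j\right)^{1/2}.
\end{equation*}
Choosing $y_j\to y_\infty$ in $Z$ at radii $s$ which are continuity points of the limit measure, the uniform convergence of $f_j$ to $f_\infty$ combined with the weak convergence of $(\iota_j)_\sharp\mm_j$ passes the left-hand side to the analogous mean of $f_\infty$ on $B_s(y_\infty)$, while the weak $L^2$-convergence of the $g_j$ passes the right-hand side to its counterpart for $g_\infty$. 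This yields a Poincar\'e inequality for $f_\infty$ with control function $g_\infty$ on the limit space; since $X_\infty$ is itself $\RCD^*(K,N)$ (hence doubling \& Poincar\'e) and locally Lipschitz functions have their slope characterised as the \emph{minimal} function satisfying such Poincar\'e bounds (Keith--Cheeger), we conclude $|Df_\infty|_w\leq g_\infty$ $\mm_\infty$-a.e., which is the required pointwise domination.

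The main obstacle is this last limiting procedure: one must simultaneously control the $L^2$-weak convergence of the gradients $g_j$, defined on the different measures $\mm_j$, and the convergence of the shrinking balls $B_{\lambda s}(y_j)$, whose boundary could \emph{a priori} be charged by the limit measure. This is circumvented by the standard trick of selecting a generic (full-measure) set of radii $s$, together with the uniform doubling property of the $\mm_j$ which forbids concentration of mass near the boundary of balls, and by the fact that convex combinations of $g_j$ (via Mazur's lemma) converge strongly, allowing to close the Keith--Cheeger argument in the limit space.
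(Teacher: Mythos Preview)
Your proposal is correct and follows precisely the approach indicated by the paper: the paper does not give an independent proof but simply observes that, since $\RCD^*(K,N)$-spaces enjoy uniform doubling and Poincar\'e and have Hilbert (hence reflexive) $W^{1,2}$, one can repeat the argument of \cite[Theorem~8.4]{ACD} (cf.\ also \cite{Cheeger97}) in the p-mGH varying-space setting. Your outline---weak $L^2$ compactness of the slopes on varying spaces via \cite{GMS2013}, passage of the Poincar\'e inequality to the limit at generic radii, Mazur's lemma to upgrade to strong convergence, and the Keith--Cheeger minimality of the weak upper gradient---is exactly the adaptation the paper has in mind.
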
 

\subsection{Heat flow on $\RCD^*(K,N)$-spaces}\label{SS:Heat}
Even if many of the results in this subsection hold in higher generality (see for instance \cite{AGMR2012}, \cite{AGS11a}, \cite{AGS11b}), as in this paper we will deal with $\RCD^*(K,N)$-spaces we focus the presentation to this case.

Since  $\Ch$ is a convex and lower semi-continuous functional on $L^2(X,\mm)$, applying the classical theory of gradient flows of  convex functionals in Hilbert spaces (see for instance \cite{Ambrosio-Gigli-Savare08} for a comprehensive presentation) one can study its gradient flow  in the space $L^2(X,\mm)$. More precisely one obtains that for every $f \in L^2(X,\mm)$ there exists a continuous curve $(f_t)_{ \in [0,\infty)}$ in $L^2(X,\mm)$, locally absolutely continuous in $(0, \infty)$ with $f_0=f$ such that $$f_t \in \dom(\Delta) \quad \text{ and } \quad  \frac{\d^+}{\d t} f_t = \Delta f_t \quad, \quad \forall t>0. $$ 
This produces a semigroup $(\H_t)_{t\geq 0}$ on $L^2(X,\mm)$ defined by $\H_t f= f_t$, where $f_t$ is the unique $L^2$-gradient flow of $\Ch$.

An important property of the heat flow is the maximum (resp. minimum) principle, see   \cite[Theorem 4.16]{AGS11a}: if $f\in L^2(X,\mm)$ satisfies $f \leq C$ $\mm$-a.e. (resp. $f \geq C$ $\mm$-a.e.), then also $\H_t f \leq C$ $\mm$-a.e.  (resp. $\H_t f \geq C$ $\mm$-a.e.) for all $t \geq 0$. Moreover the heat flow preserves the mass:  for every $f \in L^2(X,\mm)$ 
$$\int_X \H_t f \, \d \mm=\int_X f \, \d \mm, \quad \forall t \geq 0.$$ 
A nontrivial property of the heat flow proved for $\RCD(K,\infty)$-spaces in \cite[Theorem 6.8]{AGS11b} (see also \cite{AGMR2012} for the generalization to $\sigma$-finite measures) is the Lipschitz regularization; namely if $f\in L^2(X,\mm)$ then $\H_t f \in \dom(\Ch)$ for every $t>0$ and 
$$2 \,I_{2K}(t) \; \Gamma(\H_t f) \leq \H_t(f^2) \quad \mm\text{-a.e. in } X,$$
where $I_{2K}(t):=\int_0^t e^{2Ks} \, \d s= \frac{e^{2Kt}-1}{2K}$; in particular, if $f \in L^\infty(X,\mm)$ then $\H_t f$ has a Lipschitz representative for every $t>0$ and 
\begin{equation}\label{eq:LipRegHeat}
\sqrt{2\, I_{2K}(t)} \,  | D\H_t f| \leq \|f\|_{L^\infty(X,\mm)} \quad \forall t>0, \quad \text{ everywhere on } X\quad.
\end{equation}
\\Let us also recall that since $\RCD^*(K,N)$-spaces are locally doubling \& Poincar\'e, then as showed by Sturm \cite[Theorem 3.5]{Sturm96}, the heat flow satisfy the following Harnack inequality: let $Y\subset \subset X$ be a compact subset of $X$, then there exists a constant $C_H=C_H(Y)$ such that for all balls $B_{2r}(x) \subset Y$, all $t\geq 4r^2$ and all $f\in \dom(\Ch)$ with $f\geq 0 \, \mm$-a.e. on $X$ and $f=0 \, \mm$-a.e. on $X\setminus Y$  it holds
\begin{equation}\label{eq:ParHarnack}
\sup_{(s,y) \in Q^-} \H_s f (y) \leq C_H \cdot \inf_{(s,y)\in Q^+} \H_s f(y),
\end{equation}
where $Q^-:=]t-3r^2, t-2r^2[ \times B_r(x)$ and $Q^+:=]t-r^2,t[\times B_r(x)$. We wrote $\sup$ and $\inf$ instead of ess sup and ess inf because in this setting the evolved functions $\H_s f$, $s>0$, have continuous representatives \cite[Proposition 3.1]{Sturm96} given by the formula
\begin{equation}
\H_t f(x)= \int_X H_t(x,y) f(y) \, \d\mm(y)
\end{equation}
where $H_t(x,y)\geq 0$ is the so called heat kernel; recall also that $H_t(\cdot,\cdot)$ is jointly continuous on $X\times X$, symmetric and bounded for $t>0$ see \cite[Section 4]{Sturm96}. Since the flow commutes with its generator we also have that $\Delta (\H_t f)=\H_t(\Delta f)$ and in particular $\Delta (\H_t f)\in W^{1,2}(X,\sfd,\mm)$. Thanks to the $L^\infty$-to-Lipschitz regularization proved in $\RCD(K,\infty)$-spaces in \cite[Theorem 6.8]{AGS11b}, see also \cite{AGMR2012} for the generalizations to  sigma finite reference measures, it follows in particular that $H_t(\cdot,\cdot)$ is  Lipschitz on each variable. 


By using directly the $\RCD^*(K,N)$ condition one gets sharper information. For instance, in their  recent paper \cite[Theorem 4.3]{EKS2013}, Erbar-Kuwada-Sturm proved the dimensional Bakry-Ledoux $L^2$-gradient-Laplacian estimate \cite{BakryLedoux}: if $(X,\sfd,\mm)$ is a $\RCD^*(K,N)$-space, then for every $f \in \dom(\Ch)$ and every $t>0$, one has
\begin{equation}\label{eq:BakryLedoux}
\Gamma(\H_t f)+\frac{4Kt^2}{N(e^{2Kt}-1)} |\Delta \H_t f|^2 \leq e^{-2Kt} \H_t\left( \Gamma(f)\right) \quad \mm\text{-a.e.} \; .
\end{equation}
As a consequence, they showed (see Proposition 4.4) under the same assumption on $X$ that if  $\Gamma(f)\in L^\infty(X,\mm)$ then $\H_t f$ is Lipschitz and $\H_t (\Gamma f)$,$\Delta \H_t f$ have continuous representatives satisfying \eqref{eq:BakryLedoux} everywhere in $X$. In particular, thanks to the above discussion, this is true for the heat kernel $H_t(\cdot,\cdot)$.  In the sequel, if this is the case, we will always tacitly assume we are dealing with the continuous representatives. Finally let us mention that the classical Li-Yau \cite{LY86} estimates on the heat flow hold on $\RCD^*(K,N)$-spaces as well, see \cite{GaMo}.

\section{Sharp estimates  for heat flow-regularization of distance function and applications}\label{S:EstHeatApprox}
Inspired by \cite{CN}, in this section we regularize the distance function via the heat flow obtaining  sharp estimates.  We are going to follow quite closely their scheme of arguments, but the proofs of any individual lemma may sometimes differ in order to generalize the statements to the non smooth setting of $\RCD^*(K,N)$-spaces. 
\\

Throughout the section $(X,\sfd,\mm)$ is an  $\RCD^*(K,N)$-space for some $K \in \R$ and $N \in (1,+\infty)$ and $p,q\in X$ are points in $X$ satisfying $\sfd_{p,q}:=\sfd(p,q)\leq 1$ (of course, by applying the estimates recursively, one can also consider points further apart). Often we will work with the following functions:
\begin{eqnarray}
\sfd^-(x)&:=&\sfd(p,x), \label{def:d-} \\
 \sfd^+(x)&:=& \sfd(p,q)-\sfd(q,x) \label{def:d+} \\
 e(x)&:=& \sfd(p,x)+\sfd(x,q)-\sfd(p,q)=\sfd^-(x)-\sfd^+(x) \quad, \label{def:exc} 
\end{eqnarray} 
the last one being the so called \emph{excess function}. We start by proving existence of good cut-off functions with quantitative estimates, and  then we establish  a $L^1$-Harnack inequality which will imply an improved integral Abresch-Gromoll type inequality on the excess and its gradient.

\subsection{Existence of good cut-off functions on $\RCD^*(K,N)$-spaces with gradient and laplacian estimates} 
The existence of  good cut-off functions is a key technical ingredient in the theory of GH-limits of Riemannian manifolds with lower Ricci bounds, see for instance \cite{CC96,CC97,CC00a,CC00b, Col96a, Col96b,  Col97, CN}. The existence  of  regular cut-off function (i.e. Lipschitz with $L^\infty$ laplacian, but without quantitative estimates)  in $\RCD^*(K,\infty)$-spaces was proved in  \cite[Lemma 6.7]{AMSLocToGlob}; since for the sequel we need quantitative estimates on the gradient and the laplacian of the cut-off function we give here a construction for the  finite dimensional case.  

\begin{lemma}\label{lem:cutoff}
Let $(X,\sfd,\mm)$ be a $\RCD^*(K,N)$-space for some $K \in \R$ and $N\in (1,+\infty)$. Then for every $x\in X$, $R>0$, $0<r<R$  there exists a Lipschitz function $\psi^r:X \to \R$ satisfying:
\\(i) $0\leq \psi^r \leq 1$ on $X$, $\psi^r\equiv 1$ on $B_r(x)$ and $\supp (\psi^r) \subset B_{2r}(x)$;
\\(ii) $r^2|\Delta \psi^r|+r |D \psi^r| \leq C(K,N,R)$. 
\end{lemma}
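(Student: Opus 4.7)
The plan is to build $\psi^r$ by smoothing a crude radial cutoff with the heat flow for a short time $t\sim r^2$, and then composing with a smooth function on $\R$ to force the endpoints exactly to $0$ and $1$.

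First, I would fix a piecewise affine $\eta:[0,\infty)\to[0,1]$ equal to $1$ on $[0,3r/2]$ and to $0$ on $[7r/4,\infty)$, and set $\chi:=\eta\circ\sfd(x,\cdot)$. This is a Lipschitz function with $\chi\equiv 1$ on $B_{3r/2}(x)$, $\supp(\chi)\subset\overline{B_{7r/4}(x)}$, and $\Gamma(\chi)\le 16/r^2$. Regularize it by $\psi_t:=\H_t\chi$ with $t=\tau r^2$ for a small $\tau=\tau(K,N,R)\in(0,1)$ to be fixed later. The maximum principle yields $0\le\psi_t\le 1$, and the dimensional Bakry-Ledoux estimate \eqref{eq:BakryLedoux}, applied to the continuous representatives, gives both $\Gamma(\psi_t)\le e^{-2Kt}\H_t(\Gamma(\chi))\le C(K,R)/r^2$ and $|\Delta\psi_t|^2\le \frac{N(e^{2Kt}-1)}{4Kt^2}e^{-2Kt}\H_t(\Gamma(\chi))\le C(K,N,R)/r^4$, uniformly in $X$.

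Second, I would show that $\psi_t$ is already close to $1$ on $B_r(x)$ and close to $0$ outside $B_{2r}(x)$. Using the heat kernel representation together with the Gaussian upper bound $H_t(y,z)\le C\,\mm(B_{\sqrt t}(y))^{-1}\exp(-c\sfd(y,z)^2/t)$ available on locally doubling \& Poincar\'e spaces (hence on $\RCD^*(K,N)$-spaces with constants depending only on $K,N,R$), for $y\in B_r(x)$ I get $1-\psi_t(y)\le\int_{X\setminus B_{r/2}(y)}H_t(y,z)\,\d\mm(z)\le C(K,N,R)e^{-c/\tau}$, and analogously $\psi_t(y)\le C(K,N,R)e^{-c/\tau}$ for $y\in X\setminus B_{2r}(x)$. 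Choosing $\tau=\tau(K,N,R)$ small enough I get $\psi_t\ge 3/4$ on $B_r(x)$ and $\psi_t\le 1/4$ on $X\setminus B_{2r}(x)$.

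Third, I would fix $\phi\in C^\infty(\R;[0,1])$ equal to $0$ on $(-\infty,1/4]$ and to $1$ on $[3/4,\infty)$, with $\|\phi'\|_\infty+\|\phi''\|_\infty\le C$, and set $\psi^r:=\phi\circ\psi_t$. Property (i) is then immediate from the previous step. For (ii), the chain rule for the Laplacian of smooth compositions in the $\RCD^*$ setting, a consequence of the strongly local Dirichlet form structure of $\Ch$ and its Carr\'e du Champ $\Gamma$, gives $\Delta\psi^r=\phi'(\psi_t)\Delta\psi_t+\phi''(\psi_t)\Gamma(\psi_t)$, and the two estimates from the regularization step produce $r|D\psi^r|+r^2|\Delta\psi^r|\le C(K,N,R)$. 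The main obstacle is the Gaussian heat kernel bound used in the second step: one must ensure that the smoothing time $\tau r^2$ can be chosen with $\tau$ depending on $K,N,R$ only and not on $r\in(0,R)$. This is precisely where the constraint $r\le R$ enters, since the local doubling and Poincar\'e constants in $\RCD^*(K,N)$-spaces are controlled uniformly only up to the scale $R$; once this uniformity is in place, the construction reduces to a routine combination of Bakry-Ledoux, the maximum principle, and the chain rule.
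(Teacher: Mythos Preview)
Your argument is correct, and the overall architecture (heat-flow mollification of a crude cutoff, followed by composition with a smooth $\phi$) is the same as in the paper. The one genuine difference lies in how you show that $\psi_t$ stays close to $\chi$: you invoke the Gaussian upper bound on the heat kernel from Sturm's doubling \& Poincar\'e theory to control $\int_{X\setminus B_{r/2}(y)}H_t(y,z)\,\d\mm(z)$, whereas the paper avoids this entirely by writing $|\psi_t-\chi|(y)\le\int_0^t|\Delta\psi_s|(y)\,\d s$ and bounding the integrand via the Bakry--Ledoux estimate \eqref{eq:BakryLedoux} alone, which yields $|\Delta\psi_s|\le C(K,N)\,r^{-1}s^{-1/2}$ and hence $|\psi_t-\chi|\le C(K,N)\,r^{-1}\sqrt{t}$. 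The paper's route is more elementary (it uses nothing beyond Bakry--Ledoux, which is essentially the $\RCD^*(K,N)$ hypothesis itself) and makes the choice of the smoothing time transparent; in fact they first treat $r=1$, obtaining a time $t_{K,N}$ depending only on $K,N$, and then rescale to general $r<R$, which is where the $R$-dependence of the constant enters. Your approach trades this for the heavier Sturm machinery, but has the advantage of being the argument most analysts would reach for first and of yielding the exponential smallness $Ce^{-c/\tau}$ rather than merely $C\sqrt{\tau}$.
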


\begin{proof}
First of all we make the construction with estimate in case $r=1$, the general case will follow by a rescaling argument. 
\\Fix $x\in X$ and let $\tilde{\psi}$ be the 1-Lipschitz function defined as $\tilde{\psi}\equiv 1$ on $B_1(x)$, $\tilde{\psi}\equiv 0$ on $X\setminus B_2(x)$ and $\tilde{\psi}(y)=2-\sfd(x,y)$  for $y \in B_2(x)\setminus B_1(x)$. Consider the heat flow regularization $\tilde{\psi}_t:=\H_t\tilde{\psi}$ of $\tilde{\psi}$. By the results recalled in Subsection \ref{SS:Heat} we can choose continuous representatives of $\tilde{\psi}_t, |D\tilde{\psi}_t|, |\Delta \tilde{\psi}_t|$ and moreover everywhere on $X$ it holds
\begin{equation}\label{eq:estildepsi}
|D \tilde{\psi}_t|^2+\frac{4Kt^2}{N(e^{2Kt}-1)} |\Delta \tilde{\psi}_t|^2 \leq e^{-2Kt} \H_t\left( |D \tilde{\psi}|^2 \right) \leq e^{-2Kt}.
\end{equation}
It follows that
$$|\tilde{\psi}_t-\tilde{\psi}|(y)\leq \int_0^t |\Delta \tilde{\psi}_s|(y) \, \d s \leq \int_0^t \sqrt{\frac{N(e^{2Ks}-1)}{e^{2Ks} 4Ks^2}} \, \d s = F_{K,N}(t), \quad \forall y \in X,$$
where $F_{K,N}(\cdot):\R^+\to \R^+$ is continuous, converges to $0$ as $t\downarrow 0$ and to $+\infty$ as $t\uparrow +\infty$.  Therefore there exists $t_{N,K}>0$ such that $\tilde{\psi}_{t_{N,K}}(y) \in [3/4,1] $ for every $y \in B_1(x)$ and $\tilde{\psi}_{t_{N,K}}(y) \in [0,1/4]$ for every $y \notin B_2(x)$. We get now the desired cut-off function $\psi$ by composition with a $C^2$-function $f:[0,1]\to [0,1]$ such that $f\equiv 1$ on $[3/4,1]$ and $f\equiv 0$ on $[1/4,0]$; indeed  $\psi:=f \circ \tilde{\psi}_{t_{N,K}}$ is now identically equal to one on $B_1(x)$, vanishes identically on $X\setminus B_2(x)$ and, using \eqref{eq:estildepsi} and Chain Rule, it satisfies the estimate $|D\psi|+|\Delta \psi| \leq C(K,N)$ as desired.

To obtain the general case, let $r\in (0,R)$ and consider the rescaled distance $\sfd_r:=\frac{1}{r}\sfd$ on $X$. Thanks to \eqref{eq:cdinv},  the rescaled space $(X,\sfd_r,\mm)$ satisfies the $\RCD^*(r^2 K, N)$ condition and since $r^2 K\geq \hat{K}(R,K)$ we can construct a cut-off function $\psi^r$ such that $\psi^r\equiv 1$ on $B^{\sfd_r}_1(x)$,  $\psi^r\equiv 0$ on $X \setminus B^{\sfd_r}_2(x)$ and satisfying  $|D^{{\sfd_r}}\psi|+|\Delta^{\sfd_r} \psi| \leq C(K,N,R)$, where the quantities with up script  ${\sfd_r}$ are computed in rescaled metric ${\sfd_r}$. By obvious rescaling properties of the lipschitz constant and of the laplacian we get the thesis for the original metric $\sfd$.     
\end{proof}
 
In the sequel it will be useful to have good cut-off functions on annular regions. More precisely for a closed subset $C\subset X$ and $0<r_0<r_1$, we define the annulus $A_{r_0,r_1}(C):=T_{r_1}(C)\setminus T_{r_0}(C)$, where $T_r(C)$ is the $r$-tubular neighborhood of $C$. Using Lemma \ref{lem:cutoff} and the local doubling property of $\RCD^*(K,N)$-spaces one can follow verbatim the proof of \cite[Lemma 2.6]{CN} (it is essentially a covering argument) and establish the following useful result.

\begin{lemma}\label{lem:cutoffAnn}
Let $(X,\sfd,\mm)$ be a $\RCD^*(K,N)$-space for some $K \in \R$ and $N\in (1,+\infty)$. Then for every closed subset $C\subset X$, for every $R>0$ and $0<r_0<10 r_1\leq R$   there exists a Lipschitz function $\psi:X \to \R$ satisfying:
\\(1) $0\leq \psi \leq 1$ on $X$, $\psi \equiv 1$ on $A_{3r_0, r_1/3}(C)$ and $\supp (\psi) \subset A_{2r_0, r_1/2}(C)$;
\\(2) $r_0^2 |\Delta \psi|+r_0 |D \psi| \leq C(K,N,R)$ on $A_{2r_0,3r_0}(C)$;
\\(3) $r_1^2 |\Delta \psi|+r_1 |D \psi| \leq C(K,N,R)$ on $A_{r_1/3,r_1/2}(C)$. 
\end{lemma}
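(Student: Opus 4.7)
I will build $\psi$ as a product $\psi := \psi_0 \cdot \psi_1$ of two one-sided cut-offs: $\psi_0$ vanishing on $T_{2r_0}(C)$ and identically $1$ outside $T_{3r_0}(C)$ (handling the inner transition at scale $r_0$), and $\psi_1$ identically $1$ on $T_{r_1/3}(C)$ and vanishing outside $T_{r_1/2}(C)$ (handling the outer transition at scale $r_1$). Under the standing hypothesis $10r_0 \leq r_1$, the two transition annuli $A_{2r_0,3r_0}(C)$ and $A_{r_1/3,r_1/2}(C)$ are disjoint, so on the first one has $\psi_1 \equiv 1$ and $\psi = \psi_0$ (giving (2) from the estimate on $\psi_0$), while on the second $\psi_0 \equiv 1$ and $\psi = \psi_1$ (giving (3) from the estimate on $\psi_1$). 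Outside $A_{2r_0, r_1/2}(C)$ the function $\psi$ is locally constant, so that property (1) and the $[0,1]$-bound are immediate.

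For $\psi_0$ I would select a maximal $r_0/10$-separated set $\{y_j\} \subset \overline{T_{9r_0/4}(C)}$ and invoke Lemma \ref{lem:cutoff} at scale $r_0/10$ to obtain Lipschitz bumps $\tilde\varphi_j:X\to[0,1]$ with $\tilde\varphi_j \equiv 1$ on $B_{r_0/10}(y_j)$, $\supp\tilde\varphi_j \subset B_{r_0/5}(y_j)$ and $r_0^2|\Delta\tilde\varphi_j| + r_0|D\tilde\varphi_j| \leq C(K,N,R)$. Setting $S := \sum_j \tilde\varphi_j$, the maximality of $\{y_j\}$ forces $S\geq 1$ on $T_{9r_0/4}(C) \supset T_{2r_0}(C)$, while $\supp S \subset T_{5r_0/2}(C) \subset T_{3r_0}(C)$. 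Fixing once and for all a $C^2$ function $\eta:\R\to[0,1]$ with $\eta(0)=\eta'(0)=0$ and $\eta \equiv 1$ on $[1,\infty)$, the function $\psi_0 := 1-\eta \circ S$ has the right inclusions, and the chain-rule formulas $\Gamma(\eta\circ S) = |\eta'(S)|^2\,\Gamma(S)$ and $\Delta(\eta\circ S) = \eta'(S)\,\Delta S + \eta''(S)\,\Gamma(S)$ reduce the estimate (2) to bounds on $\sum_j|D\tilde\varphi_j|$, $\sum_j|\Delta\tilde\varphi_j|$ and $|DS|^2 = \Gamma(S)$. A symmetric construction at scale $r_1/10$ with separated set in $\overline{A_{3r_1/10,\,5r_1/12}(C)}$ produces $\psi_1$ and property (3).

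The heart of the argument, and the only place finite-dimensionality is genuinely used, is verifying that the cover $\{B_{r_0/5}(y_j)\}$ has multiplicity bounded by a constant $M=M(K,N,R)$ independent of $r_0$, $r_1$ and $C$. Indeed, if a point $x$ lies in $M$ of the balls $B_{r_0/5}(y_j)$ then the disjoint balls $B_{r_0/20}(y_j)$ are all contained in $B_{r_0/2}(x)$, and \eqref{eq:genDoub} (applied with ambient upper radius $R$) forces each of them to have $\mm$-measure comparable to $\mm(B_{r_0/2}(x))$, which bounds $M$ in terms of $K,N,R$; the identical computation at scale $r_1\leq R$ controls the outer cover. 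Combined with the Cauchy--Schwarz estimate $|DS|^2 \leq M\sum_j |D\tilde\varphi_j|^2$, valid because at most $M$ summands are nonzero at any point, this gives $r_0|DS| + r_0^2|\Delta S| \leq C(K,N,R)$ and hence, via the chain rule, the required estimate for $\psi_0$. The main subtlety is precisely that the quadratic $\Gamma(S)$ term in $\Delta(\eta\circ S)$ affords no cancellation, so uniform multiplicity control is essential rather than merely convenient.
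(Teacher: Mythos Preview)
Your proposal is correct and is precisely the covering argument the paper has in mind: the paper gives no proof of its own but simply cites \cite[Lemma 2.6]{CN}, remarking that one uses Lemma~\ref{lem:cutoff} together with the local doubling property---exactly the ingredients you isolate (maximal separated nets, bounded overlap via Bishop--Gromov, and the chain rule for the Laplacian).

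Two small remarks on the details. First, you correctly read the hypothesis as $10r_0\le r_1$; the paper's printed ``$r_0<10r_1$'' is evidently a typo, since otherwise the annuli $A_{3r_0,r_1/3}(C)\subset A_{2r_0,r_1/2}(C)$ need not even be nested. Second, your numerics for $\psi_1$ do not quite close: bumps at scale $r_1/10$ from Lemma~\ref{lem:cutoff} are supported in balls of radius $r_1/5$, which exceeds the width $r_1/2-r_1/3=r_1/6$ of the outer transition, so a net placed in $\overline{A_{3r_1/10,\,5r_1/12}(C)}$ produces $\supp S_1$ spilling past $T_{r_1/2}(C)$. The fix is immediate---either shrink the bump scale (e.g.\ take $r_1/30$ in Lemma~\ref{lem:cutoff}) and place the net in a tube $\overline{T_{b}(C)}$ with $b$ slightly above $r_1/3$, setting $\psi_1:=\eta\circ S_1$, or else mirror the $\psi_0$ construction by placing the net in the exterior $X\setminus T_{b}(C)$ and setting $\psi_1:=1-\eta\circ S_1$. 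Either way the multiplicity bound and chain-rule estimate go through unchanged.
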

 
 \subsection{$L^1$-Harnack and improved integral Abresch-Gromoll type inequalities}
We start with an estimate on the heat kernel similar in spirit to the one proved by Li-Yau \cite{LY86} in the smooth setting (for the framework of $\RCD^*(K,N)$-spaces see \cite{GaMo}) and by Sturm \cite{Sturm96} for doubling \& Poincar\'e spaces; since we need a little more general estimate we will give a different proof, generalizing to the non smooth setting  ideas of \cite{CN}. 

\begin{lemma}[Heat Kernel bounds]\label{lem2.3}
Let $(X,\sfd,\mm)$ be an  $\RCD^*(K,N)$-space for some $K \in \R$, $N \in (1,+\infty)$ and let $H_t(x,y)$ be the heat kernel for some $x\in X$. Then for every $R>0$, for all $0<r<R$ and $t\leq R^2$, we have
\begin{enumerate}
\item if $y \in B_{10\sqrt{t}}(x)$, then $\frac{C^{-1}(N,K,R)}{\mm(B_{10\sqrt{t}}(x))} \leq H_t(x,y) \leq   \frac{C(N,K,R)}{\mm(B_{10\sqrt{t}}(x))}$
\item $\int_{X \setminus B_r(x)} H_t(x,y) \, \d\mm(y) \leq C(N,K,R) r^{-2} t$.
\end{enumerate}
\end{lemma}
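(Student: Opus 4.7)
The overall plan is to prove part (2) first, using the Laplacian comparison of Theorem \ref{thm:LapComp}, and then to derive part (1) from (2) by combining it with the parabolic Harnack inequality \eqref{eq:ParHarnack} and the mass conservation $\int_X H_t(x,\cdot)\,\d\mm = 1$.

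For part (2), I would control the second moment $m(s) := \int_X \sfd^2(x,\cdot)\, H_s(x,\cdot)\,\d\mm$ at time $s=t$ and then conclude by Chebyshev. The idea is that, formally,
\[
m'(s) \;=\; \int_X \sfd^2(x,y)\,\Delta_y H_s(x,y)\,\d\mm(y) \;=\; \int_X H_s(x,\cdot)\,\d\bigl(\Delta^\star \sfd_x^2\bigr),
\]
and Theorem \ref{thm:LapComp} together with mass conservation and $\sfd_x \leq \text{something of order } R$ on the effective support yields $m'(s)\leq C(K,N,R)$, hence $m(t)\leq Ct$. Since $\int_{X\setminus B_r(x)} H_t\,\d\mm \leq r^{-2} m(t)$, claim (2) follows. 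To make this rigorous one must integrate by parts against a Laplacian measure, so I would replace $\sfd_x^2$ by $\sfd_x^2\,\psi_{R_0}$, where $\psi_{R_0}$ is the cut-off of Lemma \ref{lem:cutoff} supported in $B_{2R_0}(x)$ at scale $R_0\gg \sqrt{t}$; compute $m'_{R_0}(s)$ using the good $L^\infty$ bounds on $|D\psi_{R_0}|,|\Delta\psi_{R_0}|$ and the Leibniz rule; bound the error terms involving $\psi_{R_0}$ using a preliminary (non-sharp) Gaussian-type tail estimate on $H_s(x,\cdot)$; and finally send $R_0\to\infty$.

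Given (2), part (1) reduces to on-diagonal bounds plus a Harnack chaining. For the upper bound, I would observe that by (2) and mass conservation, at least half of $H_t(x,\cdot)$'s mass lies outside $B_{r_*(K,N,R)\sqrt{t}}(x)$ cannot occur, so averaging forces the existence of some $y_0\in B_{10\sqrt{t}}(x)$ with $H_t(x,y_0)\leq C/\mm(B_{10\sqrt{t}}(x))$; applying \eqref{eq:ParHarnack} to $(s,y)\mapsto H_s(x,y)$ on parabolic cylinders of spatial radius $\sim\sqrt{t}$ then propagates this upper bound to every $y\in B_{10\sqrt{t}}(x)$, with constants depending only on $K,N,R$ via the uniform local doubling of $\RCD^*(K,N)$-spaces. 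For the lower bound, (2) gives $\int_{B_{r_*\sqrt{t}}(x)} H_t\,\d\mm \geq 1/2$ for $r_*=r_*(K,N,R)$; Harnack chaining again yields that $H_t(x,\cdot)$ is comparable to its maximum on that ball, so $H_t(x,x)\geq C^{-1}/\mm(B_{r_*\sqrt{t}}(x))$, and Bishop-Gromov converts $r_*$ into $10$ at the cost of a dimensional constant; one last application of \eqref{eq:ParHarnack} transfers this lower bound from $x$ to an arbitrary $y\in B_{10\sqrt{t}}(x)$. The main obstacle I anticipate is the integration-by-parts step for $m'(s)$ against a measure-valued Laplacian with a non-compactly-supported weight; once that cut-off-and-limit argument is written carefully, the remainder is a routine application of Theorem \ref{thm:LapComp}, \eqref{eq:ParHarnack}, and Bishop-Gromov.
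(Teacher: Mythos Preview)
Your strategy is workable but takes a genuinely different route from the paper, and it is more delicate in places than you indicate.

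The paper proves (1) first and (2) second, and the key device for \emph{both} parts is not the Laplacian comparison for $\sfd_x^2$ but rather the good cut-off functions of Lemma~\ref{lem:cutoff}. Concretely: if $\psi^r$ is a cut-off with $r^2|\Delta\psi^r|\leq C(K,N,R)$, then by symmetry of the kernel and integration by parts (legitimate because $\psi^r$ has compact support),
\[
|\Delta(\H_t\psi^r)|(y)=\Big|\int_X H_t(y,z)\,\Delta\psi^r(z)\,\d\mm(z)\Big|\leq C(K,N,R)\,r^{-2},
\]
so $|\H_t\psi^r-\psi^r|\leq C r^{-2}t$ everywhere. Applied to a cut-off equal to $1$ on $B_{10r}(x)$ this gives the integral bounds needed for (1) via Harnack; applied to $\phi:=1-\psi^r$ with $\psi^r\equiv 1$ on $B_{r/2}(x)$ it gives (2) in one line: $\int_{X\setminus B_r(x)}H_t(x,\cdot)\,\d\mm\leq \H_t\phi(x)\leq C r^{-2}t$. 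No Laplacian comparison for the distance, no preliminary tail estimate, and no limit $R_0\to\infty$ are needed.

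Your second-moment argument for (2) can be made to work, but two points deserve care. First, for $K<0$ the bound $\Delta^\star(\sfd_x^2/2)\leq N\tilde\sigma_{K,N}(\sfd_x)\,\mm$ has a right-hand side that grows linearly in $\sfd_x$, so your formal inequality becomes $m'(s)\leq C_1+C_2\sqrt{m(s)}$ rather than $m'(s)\leq C$; this still integrates to $m(t)\leq C(K,N,R)\,t$ for $t\leq R^2$, but you should say so. Second, and more seriously, your plan to justify the integration by parts by cutting off at scale $R_0$ and controlling the error via ``a preliminary (non-sharp) Gaussian-type tail estimate on $H_s(x,\cdot)$'' is circular unless you specify where that preliminary estimate comes from: you are, after all, proving the tail estimate. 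One can import such a bound from Sturm's Dirichlet-space theory, but the paper explicitly opts for the self-contained cut-off argument precisely to avoid that dependence. Your derivation of (1) from (2) via mass conservation and Harnack is essentially the same as the paper's.
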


\begin{proof}
One way to get the first estimate is to directly apply the upper and lower bounds on the fundamental solution of the heat flow obtained by Sturm in \cite[Section 4]{Sturm96}, but we prefer to  give here a more elementary argument \cite{CN} based on the existence of good cut-off functions since we will make use of these estimates for the second claim and later on. 

Thanks to Lemma \ref{lem:cutoff} there exists a cut-off function $\psi^r:X \to [0,1]$ with $\psi^r\equiv 1$ on $B_{10r}(x)$, $\psi^r\equiv 0$ on $X\setminus B_{20r}(x)$ and satisfying the estimates $r|D\psi^r|+r^2|\Delta \psi^r|\leq C(K,N,R)$. Let us consider the heat flow regularization $\psi^r_t(y):=\H_t \psi^r (y)=\int_X H_t(y,z) \, \psi^r \, \d \mm(z)$ of $\psi^r$.  Using the symmetry  of the heat kernel, the bound on $|\Delta \psi^r|$, with an integration by parts ensured by the fact that $\psi^r$ has compact support we estimate
$$|\Delta \psi^r_t|(y)= \left|\int_{X} \Delta_y H_t(y,z) \, \psi^r(z) \, \d \mm(z)\right| = \left|\int_{X} \Delta_z H_t(y,z) \, \psi^r(z) \, \d \mm(z) \right|=\left|\int_{X} H_t(y,z) \, \Delta \psi^r(z) \, \d \mm(z)\right|\leq C(K,N,R) r^{-2}. $$
Therefore
$$\left| \psi^r_t-\psi^r \right|(y)\leq \int_0^t |\Delta \psi^r_s|(y) \, \d s \leq C(K,N,R) r^{-2} t. $$
By choosing $t_r:=\frac{1}{2 C(K,N,R)} r^2$ we obtain
\begin{eqnarray}
&&\int_{B_{20r}(x)} H_{2t_r}(x,z)\, \d\mm(z) \leq \int_{X} H_{2t_r}(x,z)\, \d\mm(z) \leq 1 \quad, \label{eq:pflem2.3a}\\
&&\frac{3}{4}\leq \psi^{r}_{\frac{1}{2}t_r}(x)= \int_{B_{20r}(x)} H_{\frac{1}{2}t_r}(x,z)\, \psi^r(z)\, \d\mm(z) \leq  \int_{B_{20r}(x)} H_{\frac{1}{2}t_r}(x,z) \, \d\mm(z) \quad. \label{eq:pflem2.3b}
\end{eqnarray}
From \eqref{eq:pflem2.3a} we infer that $\inf_{B_{20 r}(x)} H_{2t_r}(x,\cdot) \leq  \mm(B_{20 r}(x))^{-1}$ thus, by  the parabolic Harnack inequality \eqref{eq:ParHarnack} we get
\begin{equation}\label{eq:estSupKernel}
\sup_{B_{20 r}(x)} H_{t_r}(x,\cdot) \leq \frac{C(K,N,R)}{\mm(B_{20 r}(x))}.
\end{equation}
On the other hand, \eqref{eq:pflem2.3b} implies that  $\sup_{B_{20 r}(x)} H_{\frac{1}{2}t_r}(x,\cdot)\geq \frac{3}{4} \mm(B_{20 r}(x))^{-1}$ and again by the parabolic Harnack inequality \eqref{eq:ParHarnack} we obtain
\begin{equation}\label{eq:estInfKernel}
\inf_{B_{20 r}(x)} H_{t_r}(x,\cdot) \geq \frac{1}{\mm(B_{20 r}(x)) C(K,N,R)}.
\end{equation}
Combining \eqref{eq:estSupKernel} and  \eqref{eq:estInfKernel} together with local doubling property of the measure $\mm$ gives claim \emph{(1)}.

In order to prove the second claim let $\phi(y):=1-\psi^r(y)$, where now $\psi^r$ is the cut-off function with $\psi^r\equiv 1$ on $B_{r/2}(x)$, $\psi^r\equiv 0$ on $X\setminus B_{r}(x)$ and satisfying   $r|D\psi^r|+r^2|\Delta \psi^r|\leq C(K,N,R)$.
Denoting with $\phi_t:=\H_t \phi$, the same argument as above gives that
$$\phi_t(x) \leq C(K,N,R) r^{-2} t, $$
which yields 
$$\int_{X\setminus B_r(x)} H_t(x,z) \, \d \mm(z) \leq \int_X  H_t(x,z) \, \phi(z) \, \d \mm(z)=\phi_t(x)\leq C(K,N,R) r^{-2} t \quad,  $$
as desired.
\end{proof}

By the above sharp bounds on the heat kernel, repeating verbatim the proof of \cite[Lemma 2.1 and Remark 2.2]{CN} the following useful $L^1$-Harnack inequalities  hold.

\begin{lemma}[$L^1$-Harnack inequality]\label{lem2.1}
Let $(X,\sfd,\mm)$ be an  $\RCD^*(K,N)$-space for some $K \in \R$, $N \in (1,+\infty)$ and let $0<r<R$. If $u:X \times [0,r^2]\to \R$, $u(x,t)=u_t(x)$,  is a nonnegative  continuous function with compact support for each fixed $t\in \R$ satisfying $(\partial_t - \Delta) u \geq -c_0$ in the weak sense, then 
\begin{equation}\label{eq:MeanValue}
\fint_{B_r(x)} u_0 \leq C(K,N,R) \left[ u_{r^2}(x)+ c_0 r^2 \right].
\end{equation}
More generally the following $L^1$-Harnack inequality holds
\begin{equation}\label{eq:L1Harnack}
\fint_{B_r(x)} u_0 \leq C(K,N,R) \left[\inf_{y\in B_r(x)} u_{r^2}(y)+ c_0 r^2 \right].
\end{equation}
\end{lemma}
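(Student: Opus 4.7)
The plan is to adapt the classical proof of the mean value / $L^1$-Harnack inequality by testing the differential inequality against a backward heat kernel based at a variable point $y \in B_r(x)$. Once this is done, the lower bound on the heat kernel from Lemma \ref{lem2.3}(1), combined with local doubling, immediately converts the pointwise estimate at $y$ into an integral estimate over $B_r(x)$; taking the infimum over $y$ then upgrades the mean value bound \eqref{eq:MeanValue} to the $L^1$-Harnack bound \eqref{eq:L1Harnack} without invoking the parabolic Harnack inequality \eqref{eq:ParHarnack} separately.

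First, fix $y\in B_r(x)$ and consider, for $t\in[0,r^2)$,
$$
\phi(t):=\int_X H_{r^2-t}(y,z)\,u_t(z)\,\d\mm(z).
$$
For $s=r^2-t>0$, the kernel $H_s(y,\cdot)$ is bounded, Lipschitz and lies in $W^{1,2}\cap \dom(\Delta)$ (Subsection \ref{SS:Heat}), and $u_t$ is continuous with compact support, so $\phi$ is well defined. Using the heat equation for $H_s(y,\cdot)$ and the weak inequality $(\partial_t-\Delta)u\geq -c_0$ tested against the admissible function $H_{r^2-t}(y,\cdot)$, a computation gives
$$
\phi'(t)=\int_X H_{r^2-t}(y,z)\,\big(\partial_t u-\Delta u\big)(z,t)\,\d\mm(z)\ge -c_0\int_X H_{r^2-t}(y,z)\,\d\mm(z)=-c_0.
$$
Integration and the limits $\phi(0)=\H_{r^2}u_0(y)$, $\phi(r^2)=u_{r^2}(y)$ (by continuity of $u$ and by the fact that $H_s(y,\cdot)\to \delta_y$ as $s\downarrow 0$ in the weak sense of measures) yield
$$
\H_{r^2}u_0(y)\ \leq\ u_{r^2}(y)+c_0 r^2.
$$

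Next, relate $\H_{r^2}u_0(y)$ to $\fint_{B_r(x)} u_0\,\d\mm$. For $y\in B_r(x)$ we have $B_r(x)\subset B_{2r}(y)\subset B_{10r}(y)\subset B_{11r}(x)$; Lemma \ref{lem2.3}(1) at time $t=r^2$ gives
$$
H_{r^2}(y,z)\ \geq\ \frac{C^{-1}(K,N,R)}{\mm(B_{10r}(y))}\qquad \forall z\in B_{10r}(y),
$$
and iterating the local doubling property \eqref{eq:genDoub} yields $\mm(B_{10r}(y))\le C(K,N,R)\,\mm(B_r(x))$. Since $u_0\geq 0$,
$$
\H_{r^2}u_0(y)\ \geq\ \int_{B_r(x)} H_{r^2}(y,z)\,u_0(z)\,\d\mm(z)\ \geq\ C^{-1}(K,N,R)\,\fint_{B_r(x)} u_0\,\d\mm.
$$
Combining the two inequalities gives, for every $y\in B_r(x)$,
$$
\fint_{B_r(x)} u_0\,\d\mm\ \leq\ C(K,N,R)\big[u_{r^2}(y)+c_0 r^2\big].
$$
Choosing $y=x$ proves \eqref{eq:MeanValue}, and taking the infimum over $y\in B_r(x)$ proves \eqref{eq:L1Harnack}.

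The main technical obstacle is a rigorous justification of the differentiation formula for $\phi(t)$ in the nonsmooth setting, where $(\partial_t-\Delta)u\geq -c_0$ only holds in the weak/distributional sense. To handle this I would first approximate $u$ by a time-mollification $u^\varepsilon$, for which the weak inequality translates into a distributional inequality involving the admissible test class (Lipschitz functions with compact support), apply the integration-by-parts formula in the Dirichlet form framework using the $W^{1,2}$-regularity of $H_s(y,\cdot)$ for $s\geq \varepsilon>0$, and then pass to the limit using continuity of $u$ and the uniform bounds on $|DH_s|$ and $|\Delta H_s|$ obtained in Lemma \ref{lem2.3}. The endpoint behavior as $t\uparrow r^2$ is handled by the continuity of $u$, and the continuity of $\phi$ at $t=0$ follows from the $L^2$-continuity of the heat semigroup and dominated convergence once one truncates to the compact support of $u_0$.
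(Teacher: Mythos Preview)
Your proposal is correct and follows the same approach as the paper: the paper does not spell out a proof but refers verbatim to \cite[Lemma 2.1 and Remark 2.2]{CN}, whose argument is precisely the backward-heat-kernel monotonicity of $\phi(t)=\int_X H_{r^2-t}(y,z)\,u_t(z)\,\d\mm(z)$ combined with the lower heat-kernel bound of Lemma~\ref{lem2.3}(1) and doubling, exactly as you wrote. Your final paragraph correctly flags the only genuine point beyond the smooth case, namely justifying $\phi'(t)\geq -c_0$ when $(\partial_t-\Delta)u\geq -c_0$ holds only weakly, and your mollification plus Dirichlet-form integration-by-parts is the standard way to close this.
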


\noindent Applying Lemma \ref{lem2.1} to  a function constant in time gives the following classical $L^1$-Harnack estimate which will be used in the proof of the improved integral Abresh-Gromoll inequality.

\begin{corollary}\label{cor2.4}
Let $(X,\sfd,\mm)$ be an  $\RCD^*(K,N)$-space for some $K \in \R$, $N \in (1,+\infty)$.  If $u:X\to \R$ is a nonnegative Borel function with compact support with $u \in \dom(\Delta^\star)$ and satisfying $\Delta^\star u \leq c_0 \mm$ in the sense of measures, then for each $x \in X$ and $0<r\leq R$, we have
\begin{equation}\label{eq:MeanValueConst}
\fint_{B_r(x)} u \leq C(K,N,R) \left[ u(x)+ c_0 r^2 \right] \quad.
\end{equation}
\end{corollary}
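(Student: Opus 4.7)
The plan is to deduce the corollary directly from Lemma 2.1 by viewing $u$ as a parabolic function that happens to be time-independent. Set
$$v(x,t) := u(x), \qquad (x,t)\in X\times[0,r^2].$$
Since $\partial_t v \equiv 0$ in the weak sense and the hypothesis $\Delta^\star u \leq c_0\,\mm$ means precisely that $-\int \Gamma(u,\psi)\,\d\mm \leq c_0\int\psi\,\d\mm$ for every nonnegative Lipschitz test function $\psi$ with compact support, we obtain
$$(\partial_t - \Delta)\,v \;\geq\; -c_0 \quad\text{weakly on } X\times[0,r^2].$$
Moreover $v \geq 0$ everywhere, and $v(\cdot,t)$ has compact support (equal to $\supp u$) uniformly in $t$. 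The mean value inequality \eqref{eq:MeanValue} of Lemma \ref{lem2.1} therefore applies with $u_0 = u_{r^2} = u$, giving exactly
$$\fint_{B_r(x)} u \;\leq\; C(K,N,R)\bigl[u(x) + c_0 r^2\bigr].$$

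The only genuine issue is regularity: Lemma \ref{lem2.1} is phrased for a continuous $u_t$, whereas here $u$ is a priori only Borel, so the pointwise value $u(x)$ on the right-hand side must be given a meaning. This is the step I expect to require the most care. My plan is to handle it by heat-flow regularization: for $\varepsilon > 0$, set $u^\varepsilon := \H_\varepsilon u$. By the $\RCD^*$ heat-flow theory recalled in Subsection \ref{SS:Heat}, $u^\varepsilon$ admits a continuous (indeed Lipschitz) representative, is nonnegative by the minimum principle, and satisfies
$$\Delta u^\varepsilon = \H_\varepsilon(\Delta^\star u) \leq c_0\,\mm$$
since $\H_\varepsilon$ is order-preserving on signed measures tested against nonnegative functions. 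If $u^\varepsilon$ fails to have compact support one may multiply by a cut-off $\psi$ from Lemma \ref{lem:cutoff} supported slightly outside the relevant region; the corrector terms stemming from $\Gamma(\psi, u^\varepsilon)$ vanish on $B_r(x)$ for an appropriately chosen cut-off and do not affect the argument.

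Applying the previous paragraph to the time-constant function $v^\varepsilon(x,t) := (\psi u^\varepsilon)(x)$ yields
$$\fint_{B_r(x)} u^\varepsilon \;\leq\; C(K,N,R)\bigl[u^\varepsilon(x) + c_0 r^2\bigr]$$
for every $\varepsilon > 0$. Letting $\varepsilon \downarrow 0$, the left-hand side converges to $\fint_{B_r(x)} u\,\d\mm$ by the $L^1_{\rm loc}$ strong continuity of the heat flow, while the right-hand side converges to $u(x)$ at every point where the continuous representative of $u$ (which exists by the De Giorgi--Nash--Moser type regularity available for weakly subharmonic functions in the doubling and Poincar\'e setting of $\RCD^*(K,N)$) coincides with the $L^1$-Lebesgue value. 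This proves the stated pointwise inequality; the formula extends to every $x$ by choosing this continuous representative as the canonical one, matching the convention already used implicitly for the distance function in Theorem \ref{thm:LapComp}.
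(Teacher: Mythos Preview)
Your approach matches the paper's exactly: the paper's entire proof is the single sentence ``Applying Lemma \ref{lem2.1} to a function constant in time gives the following classical mean value estimate,'' which is precisely your first paragraph. Your additional discussion of the continuity hypothesis in Lemma \ref{lem2.1} goes beyond what the paper does; the paper simply glosses over this point, which is harmless in practice since the corollary is only ever applied (in Theorem \ref{thm:ImprAbrGrom}) to the function $\bar e=\psi\,e_{p,q}$, a product of Lipschitz functions.
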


Before continuing we remark that the verbatim techniques used to prove Lemma \ref{lem2.1} may be used to prove the following mean value estimate:

\begin{lemma}[Mean Value Inequality]
Let $(X,\sfd,\mm)$ be an  $\RCD^*(K,N)$-space for some $K \in \R$, $N \in (1,+\infty)$ and let $0<r<R$. If $u:X \times [0,r^2]\to \R$, $u(x,t)=u_t(x)$,  is a nonnegative  continuous function with compact support for each fixed $t\in \R$ satisfying $(\partial_t - \Delta) u \geq -c_0$ in the weak sense, then for $r<R$
\begin{equation}
\sup_{B_r(x)}u_{r^2} \leq C(K,N,R) \left[ \fint_{B_R(x)}u_{0}+ c_0 R^2 \right].
\end{equation}
In particular, if $u:X\to \R$ is a nonnegative Borel function with compact support with $u \in \dom(\Delta^\star)$ and satisfying $\Delta^\star u \leq c_0 \mm$ in the sense of measures, then
\begin{equation}\label{eq:MeanValueConst}
\sup_{B_r(x)} u \leq C(K,N,R) \left[ \fint_{B_R(x)}u+ c_0 R^2 \right] \quad.
\end{equation}
\end{lemma}
\begin{remark}
Note that combining the last two results gives rise to the classical harnack inequality for harmonic functions.  Also note that for none of the above results is compact support needed, simply one want sufficient growth conditions so that the convolution with the heat kernel is well defined.\\
\end{remark}

We conclude this subsection with a proof of the improved integral Abresch-Gromoll inequality for the excess function $e_{p,q}(x):=\sfd(p,x)+\sfd(x,q)-\sfd(p,q)\geq 0$ relative to a couple of points $p,q\in X$.  Observe that if $\gamma(\cdot)$ is a minimizing geodesic connecting $p$ and $q$, then $e_{p,q}$ attains its minimum value $0$ all along $\gamma$. Therefore, in case  $(X,\sfd,\mm)$ is a smooth Riemannian manifold with uniform estimates on sectional curvature and injectivity radius, since $e_{p,q}$ would be a smooth function near the interior of $\gamma$, one would expect  for $x \in B_r(\gamma(t))$ the estimate $e(x)\leq Cr^2$. In case of lower Ricci bounds and more generally in $\RCD^*(K,N)$-spaces,  this is a lot to ask for. However, an important estimate by Abresh and Gromoll  \cite{AG90}  (see \cite{GiMo12} for the generalization to the $\RCD^*(K,N)$ setting) states that
$$e(x)\leq C r^{1+\alpha(K,N)}, $$
where $\alpha(K,N)$ is a small constant and $x\in B_r(\gamma(t))$.  The next theorem,  which generalizes a result of \cite{CN} proved for smooth Riemannian manifolds with lower Ricci curvature bounds, is an improvement of this statement: indeed even if we are not able to take $\alpha\equiv 1$, this is in fact the case at \emph{most} points.

\begin{theorem}[Improved Integral Abresh-Gromoll inequality]\label{thm:ImprAbrGrom} 
Let $(X,\sfd,\mm)$ be an  $\RCD^*(K,N)$-space for some $K \in \R$ and $N \in (1,+\infty)$; let $p,q \in X$ with $\sfd_{p,q}:=\sfd(p,q)\leq 1$ and  fix $0<\varepsilon<1$. 
Then there exists  $\bar{r}=\bar{r}(N,K,\varepsilon)\in (0,1]$ so that the following holds: if $x \in A_{\varepsilon \, \sfd_{p,q}, 2 \sfd_{p,q}}(\{p,q\})$ satisfies $e_{p,q}(x)\leq r^2 \sfd_{p,q}$ for some $r \in (0,\bar{r}]$, then
$$\fint_{B_{r \sfd_{p,q}}(x)} e_{p,q}(y) \, \d \mm(y) \leq C(K,N,\varepsilon) r^2 \sfd_{p,q} \quad.$$
\end{theorem}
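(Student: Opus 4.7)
The plan is to apply the mean-value estimate of Corollary \ref{cor2.4} to the nonnegative function $u := \psi \cdot e_{p,q}$, where $\psi$ is a cut-off chosen at the \emph{annular scale} $\varepsilon\sfd_{p,q}$ rather than the microscopic radius $r\sfd_{p,q}$ of the ball over which we wish to average. Set $L := \sfd_{p,q}$ and $s := rL$. I would take $\psi$ from Lemma \ref{lem:cutoff} with $\psi\equiv 1$ on $B_{\varepsilon L/4}(x)$, $\supp\psi\subset B_{\varepsilon L/2}(x)$ and $|D\psi|\leq C(K,N)/(\varepsilon L)$, $|\Delta^\star\psi|\leq C(K,N)/(\varepsilon L)^{2}\,\mm$. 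Because $x\in A_{\varepsilon L,2L}(\{p,q\})$ and $L\leq 1$, the triangle inequality forces $\sfd(p,\cdot),\sfd(q,\cdot)\in[\varepsilon L/2,\,5L/2]$ throughout $\supp\psi$, so Theorem \ref{thm:LapComp} yields $\Delta^\star e_{p,q}\leq C(K,N)/(\varepsilon L)\,\mm$ there.

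The next ingredient is a Lipschitz-type pointwise bound on $e_{p,q}$ over $\supp\psi$. Since $e_{p,q}$ is $2$-Lipschitz and $e_{p,q}(x)\leq r^{2}L$, for every $y\in\supp\psi$ one has $e_{p,q}(y)\leq r^{2}L+\varepsilon L\leq 2\varepsilon L$ as soon as $\bar r^{2}\leq\varepsilon$. Combining this with the Leibniz rule
\[
\Delta^\star(\psi\,e_{p,q}) \;=\; \psi\,\Delta^\star e_{p,q} \;+\; e_{p,q}\,\Delta^\star\psi \;+\; 2\,\Gamma(\psi,e_{p,q})\,\mm,
\]
together with the a.e. bound $|De_{p,q}|\leq 2$, each of the three summands is controlled by $C(K,N)/(\varepsilon L)$, giving $\Delta^\star u\leq C(K,N)/(\varepsilon L)\,\mm$ on $X$. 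The function $u$ is nonnegative, Borel, compactly supported (properness of $X$ follows from doubling), and lies in $\dom(\Delta^\star)$ because $\supp\psi$ avoids $\{p,q\}$.

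I would then apply Corollary \ref{cor2.4} with $c_0 = C(K,N)/(\varepsilon L)$ and radius $s = rL\leq 1$ to obtain
\[
\fint_{B_{rL}(x)} u \,\d\mm \;\leq\; C(K,N)\Bigl[\,u(x) + \tfrac{C(K,N)}{\varepsilon L}\,(rL)^{2}\Bigr] \;\leq\; C(K,N,\varepsilon)\,r^{2} L.
\]
Requiring in addition $\bar r\leq\varepsilon/4$ forces $B_{rL}(x)\subset B_{\varepsilon L/4}(x)$, where $\psi\equiv 1$, so $\fint_{B_{rL}(x)}e_{p,q} = \fint_{B_{rL}(x)} u$ and the theorem follows with $\bar r(K,N,\varepsilon) := \min\{\varepsilon/4,\sqrt{\varepsilon}\}$.

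The one delicate point is the cross term $e_{p,q}\,\Delta^\star\psi$. The obvious choice of cut-off at scale $rL$ (matching the ball we average over) only delivers the coarse bound $e_{p,q}\lesssim rL$ on $\supp\psi$ via Lipschitzness, and hence $|e_{p,q}\,\Delta^\star\psi|\lesssim 1/(rL)$, which is too large by a factor $1/r$ and destroys the target estimate. The resolution, and the whole reason the argument works, is to take the cut-off at the intermediate scale $\varepsilon L$: there $e_{p,q}$ is a priori only bounded by $\varepsilon L$ on $\supp\psi$, but $|\Delta^\star\psi|$ is correspondingly smaller, of order $1/(\varepsilon L)^{2}$, so the cross term is of the same order $1/(\varepsilon L)$ as the comparison bound on $\Delta^\star e_{p,q}$ itself, which is precisely what we can afford.
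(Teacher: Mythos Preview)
Your proof is correct and follows essentially the same approach as the paper: cut off $e_{p,q}$ at the scale $\varepsilon\,\sfd_{p,q}$, use the Laplacian comparison together with the Leibniz rule to get $\Delta^\star(\psi\,e_{p,q})\leq C(K,N,\varepsilon)/\sfd_{p,q}\,\mm$, and apply Corollary~\ref{cor2.4}. The only cosmetic difference is that you use a ball cut-off from Lemma~\ref{lem:cutoff} centered at $x$, while the paper uses the annular cut-off from Lemma~\ref{lem:cutoffAnn} around $\{p,q\}$; both choices keep the support away from $p,q$ and live at the same scale, so the resulting bounds are identical (your ``$5L/2$'' upper bound should read $7L/2$, but this is immaterial).
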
  

\begin{proof}
Let $\psi$ be the cut-off function given by Lemma \ref{lem:cutoffAnn} relative to $C:=\{p,q\}$ with  $\psi\equiv 1$ on $A_{\varepsilon \, \sfd_{p,q}, 2 \sfd_{p,q}}(\{p,q\})$, $\psi\equiv 0$ on  $X\setminus A_{\varepsilon \, \sfd_{p,q}/2 , 4 \sfd_{p,q}}(\{p,q\})$, and satisfying $ \varepsilon \, \sfd_{p,q} |D \psi|+  \varepsilon^2 \sfd_{p,q}^2|\Delta \psi| \leq C(K,N)$. Setting $\bar{e}:= \psi e_{p,q}$, using the Laplacian comparison estimate of Theorem \ref{thm:LapComp}, we get that $\bar{e}\in \dom(\Delta^\star)$ and
$$\Delta^\star \bar{e}= \left(\Delta \psi\; e_{p,q} \right) \mm+ \left( 2 \Gamma(\psi,e_{p,q}) \right) \mm + \psi \; \Delta^\star e_{p,q} \leq \frac{C(K,N,\varepsilon)}{\sfd_{p,q}} \mm \quad\text{as measures}.$$
The claim follows then by applying Corollary \ref{cor2.4}.
\end{proof}
\noindent
Clearly, Theorem \ref{thm:ImprAbrGrom} implies the standard Abresh-Gromoll inequality:

\begin{corollary}[Classical Abresh-Gromoll inequality]\label{cor:AbrGrom}
Let $(X,\sfd,\mm)$ be an  $\RCD^*(K,N)$-space for some $K \in \R$ and $N \in (1,+\infty)$; let $p,q \in X$ with $\sfd_{p,q}:=\sfd(p,q)\leq 1$ and  fix $0<\varepsilon<1$.
Then there exists  $\bar{r}=\bar{r}(N,K,\varepsilon)\in (0,1]$ so that the following holds: if $x \in A_{\varepsilon \, \sfd_{p,q}, 2 \sfd_{p,q}}(\{p,q\})$ satisfies $e_{p,q}(x)\leq r^2 \sfd_{p,q}$ for some  $r \in (0,\bar{r}]$, then there exists $\alpha(N)\in (0,1)$ such that
$$ e_{p,q}(y)\leq  C(K,N,\varepsilon)\,  r^{1+\alpha(N)} \, \sfd_{p,q}\; , \quad \forall y \in B_{r \sfd_{p,q}}(x)\quad .$$
\end{corollary}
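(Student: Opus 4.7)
The plan is to deduce the pointwise bound from the integral estimate of Theorem \ref{thm:ImprAbrGrom} by combining the Lipschitz regularity of $e_{p,q}$ with the local doubling property. First I observe that, since the functions $\sfd(p,\cdot)$ and $\sfd(\cdot,q)$ are each $1$-Lipschitz, the excess $e_{p,q}$ is $2$-Lipschitz and nonnegative. Hence if $y\in B_{r\sfd_{p,q}}(x)$ and $M:=e_{p,q}(y)$, then
$$e_{p,q}(z)\ge M-2\sfd(z,y)\ge M/2\qquad\text{for every }z\in B_{M/4}(y).$$
The Lipschitz bound also gives the crude estimate $M\le e_{p,q}(x)+2r\sfd_{p,q}\le (r^2+2r)\sfd_{p,q}\le 3r\sfd_{p,q}$, provided $\bar r$ is small enough, so that $B_{M/4}(y)\subset B_{2r\sfd_{p,q}}(x)$.

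Next I would apply the integral Abresch-Gromoll inequality (Theorem \ref{thm:ImprAbrGrom}) on the ball $B_{2r\sfd_{p,q}}(x)$, which is legitimate because $x$ still lies in the annular region and $e_{p,q}(x)\le r^2\sfd_{p,q}\le (2r)^2\sfd_{p,q}$, assuming $2r\le \bar r(K,N,\varepsilon)$. This yields
$$\int_{B_{2r\sfd_{p,q}}(x)}e_{p,q}\,\d\mm\le 4\,C(K,N,\varepsilon)\,r^2\sfd_{p,q}\,\mm\!\left(B_{2r\sfd_{p,q}}(x)\right).$$
On the other hand, the lower bound on $B_{M/4}(y)$ gives
$$\int_{B_{2r\sfd_{p,q}}(x)}e_{p,q}\,\d\mm\ge \int_{B_{M/4}(y)}e_{p,q}\,\d\mm\ge \tfrac{M}{2}\,\mm(B_{M/4}(y)).$$

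Combining these two inequalities reduces the matter to comparing the two ball measures. This is exactly where the finite-dimensionality of the space enters: applying the Bishop-Gromov estimate (in the form \eqref{eq:genDoub}, which in $\RCD^*(K,N)$-spaces yields a doubling dimension arbitrarily close to $N$ at small scales), there is $C'=C'(K,N)$ such that
$$\frac{\mm(B_{2r\sfd_{p,q}}(x))}{\mm(B_{M/4}(y))}\le C'\left(\frac{8r\sfd_{p,q}}{M}\right)^{N}.$$
Plugging this into the inequality above and solving for $M$ gives
$$M^{1+N}\le C''(K,N,\varepsilon)\,r^{2+N}\,\sfd_{p,q}^{\,1+N},$$
whence $M\le C(K,N,\varepsilon)\,r^{1+\alpha(N)}\,\sfd_{p,q}$ with $\alpha(N):=\tfrac{1}{1+N}\in(0,1)$, which is the desired conclusion.

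The main (and really the only) potential obstacle is verifying that the relevant balls are correctly nested and that the integral estimate of Theorem \ref{thm:ImprAbrGrom} can indeed be applied on the slightly enlarged ball $B_{2r\sfd_{p,q}}(x)$; this is a book-keeping issue that is easily handled by choosing $\bar r$ small enough (e.g. replacing it by $\bar r/2$). The doubling comparison is standard and controlled uniformly in the annular region since $\sfd_{p,q}\le 1$ bounds the scales involved.
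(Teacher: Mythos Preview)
Your proof is correct and uses essentially the same ingredients as the paper's argument: the integral Abresch--Gromoll estimate, the $2$-Lipschitz bound on $e_{p,q}$, and Bishop--Gromov volume comparison. The only difference is in the packaging---the paper finds a good point in a small sub-ball and then uses the Lipschitz bound to spread the estimate (optimizing over the sub-ball radius $s$), whereas you argue by bounding from below the integral on the ball $B_{M/4}(y)$ where the excess is at least $M/2$; your version avoids the explicit optimization and in fact yields the slightly better exponent $\alpha(N)=\tfrac{1}{N+1}$ compared to the paper's $\tfrac{1}{N+2}$.
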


\begin{proof}
Theorem \ref{thm:ImprAbrGrom} combined with Bishop-Gromov  estimate on volume growth of metric balls \cite{BS2010,CS12,CM16} gives that
for every ball $B_{s \sfd_{p,q}}(z_0)\subset B_{r\sfd_{p,q}}(x)$ it holds
$$\fint_{B_{s \sfd_{p,q}}(z_0)}  e_{p,q} \, \d \mm \leq \frac{\mm(B_{r \sfd_{p,q}}(z_0))}{\mm(B_{s \sfd_{p,q}}(z_0))} \fint_{B_{r \sfd_{p,q}}(z_0)}  e_{p,q} \, \d \mm \leq C(K,N,\varepsilon) \frac{r^{N}}{s^{N}} r^2 \sfd_{p,q}=  C(K,N,\varepsilon) \frac{r^{N+2}}{s^{N}} \, \sfd_{p,q};$$
in particular there exists a point $z \in B_{s \sfd_{p,q}}(z_0)$ such that $e_{p,q}(z)\leq  C \, \frac{r^{N+2}}{s^{N}} \, \sfd_{p,q}$. Since $|D e_{p,q}|\leq 2$ we infer that 
$$e_{p,q}(y) \leq   C \, \left[ \frac{r^{N+2}}{s^{N}}+ 2 s \right]  \sfd_{p,q} \;, \quad \forall y \in B_{s \sfd_{p,q}}(z).$$
Minimizing in $s$ the right hand side and using the arbitrarity of the initial ball $B_{s \sfd_{p,q}}(z_0)\subset B_{r\sfd_{p,q}}(x)$,   we obtain the thesis with $\alpha(N)=\frac{1}{N+1}$.   
\end{proof}
\noindent
Combining   Theorem \ref{thm:ImprAbrGrom}  and Corollary \ref{cor:AbrGrom} with the Laplacian comparison estimate of Theorem \ref{thm:LapComp}, via an integration by parts we get the following crucial gradient estimate on the excess function (which, to our knowledge, is original even in the smooth setting).

\begin{theorem}[Gradient estimate of the excess]\label{thm:GradExc}
Let $(X,\sfd,\mm)$ be an  $\RCD^*(K,N)$-space for some $K \in \R$ and $N \in (1,+\infty)$; let $p,q \in X$ with $\sfd_{p,q}:=\sfd(p,q)\leq 1$ and  fix $0<\varepsilon<1$. \\ If $x \in A_{\varepsilon \, \sfd_{p,q}, 2 \sfd_{p,q}}(\{p,q\})$ satisfies $e_{p,q}(x)\leq r^2 \sfd_{p,q} \leq \bar{r}^2(N,K,\varepsilon) \sfd_{p,q}$ and $B_{2r \sfd_{p,q}}(x)\subset  A_{\varepsilon \, \sfd_{p,q}, 2 \sfd_{p,q}}(\{p,q\})$, then there exists $\alpha(N)\in (0,1)$ such that
$$\fint_{B_{r\, \sfd_{p,q}}(x)} |D \,e_{p,q}|^2\, \d \mm \leq C(K,N,\varepsilon) \, r^{1+\alpha}  \quad.$$
\end{theorem}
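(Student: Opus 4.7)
My plan is a Caccioppoli-style integration by parts that puts together the three estimates just established in this section: the pointwise Abresch-Gromoll bound of Corollary~\ref{cor:AbrGrom}, the integrated version of Theorem~\ref{thm:ImprAbrGrom}, and the Laplacian comparison of Theorem~\ref{thm:LapComp}. First I would apply Corollary~\ref{cor:AbrGrom} at the doubled radius $2r$ (legitimate because $B_{2r\sfd_{p,q}}(x)\subset A_{\varepsilon\sfd_{p,q},2\sfd_{p,q}}(\{p,q\})$ and $e_{p,q}(x)\le r^2\sfd_{p,q}\le (2r)^2\sfd_{p,q}$) to obtain the uniform pointwise upper bound
\[
e_{p,q}(y)\le M:=C(K,N,\varepsilon)\,r^{1+\alpha}\sfd_{p,q}\qquad\forall y\in B_{2r\sfd_{p,q}}(x).
\]
Since both $\sfd(p,\cdot)$ and $\sfd(q,\cdot)$ are bounded below by $\varepsilon\sfd_{p,q}$ on this ball, Theorem~\ref{thm:LapComp} yields the Radon-measure inequality $\Delta^\star e_{p,q}=\Delta^\star\sfd(p,\cdot)+\Delta^\star\sfd(q,\cdot)\le g\,\mm$ with $g\le C(K,N,\varepsilon)/\sfd_{p,q}$. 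Finally, Lemma~\ref{lem:cutoff} provides a cut-off $\psi\in\Lip(X)$ with $\psi\equiv 1$ on $B_{r\sfd_{p,q}}(x)$, $\supp\psi\subset B_{2r\sfd_{p,q}}(x)$, and $|D\psi|\le C/(r\sfd_{p,q})$.

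The heart of the argument is the integration by parts against the non-negative test function $(M-e_{p,q})\psi^2$, which is Lipschitz and compactly supported inside the annular region. By the definition of $\Delta^\star e_{p,q}$ combined with the one-sided bound,
\[
\int\Gamma(e_{p,q},(M-e_{p,q})\psi^2)\,\d\mm=-\int(M-e_{p,q})\psi^2\,\d\Delta^\star e_{p,q}\ge -\int(M-e_{p,q})\psi^2 g\,\d\mm.
\]
Expanding the gradient via the Leibniz rule as $-\psi^2|De_{p,q}|^2+2(M-e_{p,q})\psi\,\Gamma(e_{p,q},\psi)$ and rearranging gives the Caccioppoli inequality
\[
\int\psi^2|De_{p,q}|^2\,\d\mm\le 2\int(M-e_{p,q})\psi\,\Gamma(e_{p,q},\psi)\,\d\mm+\int(M-e_{p,q})\psi^2 g\,\d\mm.
\]

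The zeroth-order term is immediate: $\int(M-e_{p,q})\psi^2 g\le Mg\,\mm(\supp\psi)\le Cr^{1+\alpha}\mm(B_{r\sfd_{p,q}}(x))$, after using doubling. For the cross term, Cauchy-Schwarz and Young's inequality with absorption yield
\[
2\int(M-e_{p,q})\psi\,\Gamma(e_{p,q},\psi)\le \tfrac12\int\psi^2|De_{p,q}|^2\,\d\mm+2\int(M-e_{p,q})^2|D\psi|^2\,\d\mm,
\]
so that after absorbing $\tfrac12\int\psi^2|De_{p,q}|^2$ into the left side one is reduced to controlling $\int(M-e_{p,q})^2|D\psi|^2$. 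I would bound this by combining the pointwise estimate $(M-e_{p,q})^2\le M(M-e_{p,q})$ with $|D\psi|^2\le C/(r\sfd_{p,q})^2$ and then exploit the integrated Abresch-Gromoll bound $\int_{B_{2r\sfd_{p,q}}(x)}e_{p,q}\,\d\mm\le Cr^2\sfd_{p,q}\,\mm(B_{2r\sfd_{p,q}}(x))$ from Theorem~\ref{thm:ImprAbrGrom} to improve the $L^1$-mass of $M-e_{p,q}$.

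The main obstacle will be the delicate balancing of exponents in this cross-term estimate. A naive use of just the pointwise bound $M-e_{p,q}\le M$ gives a contribution of order $M^2/(r\sfd_{p,q})^2\cdot\mm(B)\sim r^{2\alpha}\mm(B)$, which is weaker than the target $r^{1+\alpha}\mm(B)$ whenever $\alpha<1$. The substitution of $\int e_{p,q}$ by its integrated Abresch-Gromoll value is exactly the gain that closes this gap, and one may need to relabel $\alpha=\alpha(N)\in(0,1)$ in the final conclusion to a value smaller than (but determined by) the one furnished by Corollary~\ref{cor:AbrGrom}.
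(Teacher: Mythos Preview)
Your Caccioppoli approach stalls at the cross term, and the remedy you propose does not work. After absorption you must control
\[
\int (M-e_{p,q})^2\,|D\psi|^2\,\d\mm
\;\le\;
\frac{C\,M}{(r\sfd_{p,q})^2}\int_{B_{2r\sfd_{p,q}}(x)}(M-e_{p,q})\,\d\mm
\;=\;\frac{C\,M}{(r\sfd_{p,q})^2}\Big(M\,\mm(B_{2r\sfd_{p,q}})-\int_{B_{2r\sfd_{p,q}}(x)} e_{p,q}\,\d\mm\Big).
\]
Theorem~\ref{thm:ImprAbrGrom} is an \emph{upper} bound on $\int e_{p,q}$, so it is useless for making $M\,\mm(B)-\int e_{p,q}$ small; the only available lower bound is $\int e_{p,q}\ge 0$, which leaves you with $C\,M^2/(r\sfd_{p,q})^2\cdot\mm(B)\sim r^{2\alpha}\,\mm(B)$. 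Since $\alpha=\alpha(N)=\tfrac{1}{N+2}<1$, the exponent $2\alpha$ is strictly less than $1$, whereas the theorem requires an exponent $1+\alpha'>1$. No relabelling of $\alpha$ rescues this: the target exponent must exceed $1$, and your mechanism cannot produce that.

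The paper sidesteps the difficulty by using the \emph{Laplacian} bound on the cut-off from Lemma~\ref{lem:cutoff} (not just the gradient bound) and integrating by parts one step further. Starting from $\int |De|^2\varphi$, two integrations by parts produce terms of the form $\int e^2\,\Delta\varphi$ and $\int e\,\varphi\,\d(\Delta^\star e)$; splitting the latter as $\int\big((\sup e)-e\big)\varphi\,\d(\Delta^\star e)-(\sup e)\int\varphi\,\d(\Delta^\star e)$ allows one to use the one-sided Laplacian comparison on the first piece (the test function is non-negative) and to convert the second piece back into $(\sup e)\int e\,|\Delta\varphi|$. Each surviving term then pairs $\sup e\lesssim r^{1+\alpha}\sfd_{p,q}$, $|\Delta\varphi|\lesssim (r\sfd_{p,q})^{-2}$, and the genuinely useful input $\fint e\lesssim r^2\sfd_{p,q}$ from Theorem~\ref{thm:ImprAbrGrom}, giving exactly $r^{1+\alpha}$. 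The moral is that you must throw both derivatives onto the cut-off (so that the integrated excess bound acts on $\int e$, not on $\int(M-e)$); the $\psi^2$-Caccioppoli scheme, which keeps one derivative on $e$, cannot do this.
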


\begin{proof} Let $\varphi$ be the cut-off function given by Lemma \ref{lem:cutoff} with $\varphi\equiv 1$ on $B_{r\, \sfd_{p,q}}(x)$,  $\supp \varphi \subset B_{2r \,\sfd_{p,q}}(x)\subset A_{\varepsilon \, \sfd_{p,q}, 2 \sfd_{p,q}}(\{p,q\})$ and satisfying $r \,\sfd_{p,q} |D \varphi|+r^2 \sfd_{p,q}^2 |\Delta \varphi|\leq C(K,N)$. By iterative integrations by parts, recalling that by the Laplacian comparison \ref{thm:LapComp} we have $e_{p,q} \in {\dom}(\Delta^\star, B_{2r \sfd_{p,q}}(x))$ with upper bounds in terms of $\mm$,  we get (we write shortly $e$ in place of $e_{p,q}$)
\begin{eqnarray}
\fint_{B_{r\, \sfd_{p,q}}(x)} |D \,e|^2\, \d \mm &\leq& C(K,N) \fint_{B_{2r\, \sfd_{p,q}}(x)} |D \,e|^2\, \varphi \, \d \mm  \nonumber \\
&=&  C(K,N) \left[- \fint_{B_{2r\, \sfd_{p,q}}(x)} \Gamma(e,\varphi) \,e \, \d \mm - \frac{1}{\mm(B_{2r\, \sfd_{p,q}}(x))} \int_{B_{2r \sfd_{p,q}}(x)}  e \, \varphi \,  \d(\Delta^\star e) \right] \nonumber \\
&=& C(K,N) \Big[ -\frac{1}{2} \fint_{B_{2r\, \sfd_{p,q}}(x)} \Gamma(e,\varphi) \,e  \, \d\mm +  \frac{1}{2} \fint_{B_{2r\, \sfd_{p,q}}(x)} \Delta \varphi \,e^2  \, \d\mm + \frac{1}{2} \fint_{B_{2r\, \sfd_{p,q}}(x)} \Gamma(\varphi, e) e  \, \d\mm \nonumber \\
&& \quad \quad \quad \quad - \frac{1}{\mm(B_{2r\, \sfd_{p,q}}(x))} \int_{B_{2r \sfd_{p,q}}(x)}  e \, \varphi \,  \d(\Delta^\star e) \Big] \nonumber\\
&=& C(K,N) \Big[ \frac{1}{2} \fint_{B_{2r\, \sfd_{p,q}}(x)} \Delta \varphi \,e^2  \, \d\mm  +  \frac{1}{\mm(B_{2r\, \sfd_{p,q}}(x))} \int_{B_{2r\, \sfd_{p,q}}(x)} \big((\sup_{B_{2r\, \sfd_{p,q}}(x)} e)-e\big) \, \varphi \, \d(\Delta^\star e) \nonumber\\
&&\quad \quad \quad \quad  -   \frac{1}{\mm(B_{2r\, \sfd_{p,q}}(x))} \big(\sup_{B_{2r\, \sfd_{p,q}}(x)} e\big)     \int_{B_{2r\, \sfd_{p,q}}(x)} \varphi \, \d(\Delta^\star e)  \Big] \nonumber\\
&\leq& C(K,N)     \Big[ \big(\sup_{B_{2r\, \sfd_{p,q}}(x)} e\big) \fint_{B_{2r\, \sfd_{p,q}}(x)} |\Delta \varphi| e \, \d\mm +  \frac{1}{\mm(B_{2r\, \sfd_{p,q}}(x))} \int_{B_{2r\, \sfd_{p,q}}(x)} \big((\sup_{B_{2r\, \sfd_{p,q}}(x)} e)-e\big) \, \varphi \, \d(\Delta^\star e) \nonumber\\
&&\quad \quad \quad \quad  +  \big(\sup_{B_{2r\, \sfd_{p,q}}(x)} e \big)  \fint_{B_{2r\, \sfd_{p,q}}(x)}  e \, |\Delta \varphi |\, \d\mm  \Big] \nonumber\\
&\leq& C(K,N,\vare)     \left[ r^{1+\alpha}\, \sfd_{p,q} (r \sfd_{p,q})^{-2} r^2 \sfd_{p,q}+   r^{1+\alpha}  + r^{1+\alpha}\, \sfd_{p,q}   r^2 \sfd_{p,q} (r \sfd_{p,q})^{-2} \right] \nonumber \\
&\leq&  C(K,N,\vare)  \, r^{1+\alpha}\quad,  
\end{eqnarray}
where in the second to last estimate we used Theorem \ref{thm:ImprAbrGrom}, Corollary \ref{cor:AbrGrom} and the Laplacian comparison estimate \ref{thm:LapComp}.
\end{proof}

\subsection{Estimates on the Heat-flow regularization of the distance function}\label{SS:estHeatReg}
The goal of the present subsection is to prove the  first order estimates for heat-flow regularization of the distance function that will be used later.
Throughout  the subsection $(X,\sfd,\mm)$ is  an  $\RCD^*(K/r_{1}^{2},N)$-space for some $K \in \R$, $N \in (1,+\infty)$ and $r_{1}\geq 1$. We fix  points $\bar{x}, p,q\in X$ such that $\sfd(p,\bar{x}), \sfd(q, \bar{x})\in [r_{1}, 2 r_{1}]$.
Let $\psi$ be the cutoff function given by Lemma \ref{lem:cutoff} (note that in the last estimate we have $C(K,N)$ instead of $C(K,N,R)$ since we are assuming the space to be $\RCD^{*}(K/r_{1}^{2},N)$ instead of  $\RCD^{*}(K,N)$, the proof of   Lemma \ref{lem:cutoff} gives the claim) such that 
$$\psi\equiv 1 \; \text{on $B_{r_{1}/4}(\bar{x})$, \; } \;   \psi\equiv 0 \text{ on $X\setminus B_{r_{1}/2}(\bar{x})$  and } r_{1}^2|\Delta \psi^r|+r_{1} |D \psi^r| \leq C(K,N). $$
Throughout the subsection we will deal with the heat flow regularizations of the distance and excess functions defined in \eqref{def:d-}, \eqref{def:d+} and \eqref{def:exc}:
\begin{equation}\label{def:htet}
h_t^-:= \H_t (\psi\, \sfd^-), \quad  h_t^+:= \H_t (\psi \, \sfd^+) \; \text{ and } e_t:= \H_t(\psi \, e).
\end{equation} 
In particular $h_0^{\pm}=\sfd^{\pm}, e_0=e$ on $B_{r_{1}/4}(\bar{x})$ and by uniqueness of the heat flow $e_t=h^-_t-h^+_t$.  Observe also that, since of course $h_0^{\pm}$ and $e_0$ are Lipschitz, the results recalled in Section \ref{SS:Heat} imply that $\Gamma(h^{\pm}_t), \Gamma(e_t), \Delta h^{\pm}_t, \Delta e_t$ have continuous representatives. We start with the following easy consequence of the Laplacian comparison Theorem \ref{thm:LapComp}.

\begin{lemma}\label{lem:Laphtet}
Let $h^{\pm}_t, e^{\pm}_t$ be defined in  \eqref{def:htet}.  Then there exists $C(K,N)>0$ such that
\begin{equation} \label{eq:Laphtet}
\Delta h^-_t,  - \Delta h^+_t,  \Delta e_t \leq \frac{C(K,N)}{r_{1}}. 
\end{equation}
\end{lemma}

\begin{proof}
We show the claim for $e_t$, the proof of the others is completely analogous. First of all,  by the Laplacian comparison Theorem  \ref{thm:LapComp} we have  $e_0\in \dom(\Delta^\star)$ and
$$\Delta^\star e_0= \left(\Delta \psi\; e \right) \mm+ \left( 2 \Gamma(\psi,e ) \right) \mm + \psi \; \Delta^\star e \leq \frac{C(K,N)}{r_{1}} \mm \quad\text{as measures}.$$
Notice that, since $e_0$ has compact support we can test the above inequality of compactly supported measures on functions that may not have compact support.  Then recalling the symmetry of the heat kernel and that by definition $e_t(x)=\int_{B_{r_{1}/2}(\bar{x})} H_t(x,y) \, \psi(y) \, e(y) \, \d\mm(y)$, we get via an integration by parts
\begin{eqnarray}
\Delta e_t (x) &=& \int_{B_{r_{1}/2}(\bar{x})} \Delta_x H_t(x,y) \, \psi(y) \, e(y) \, \d\mm(y) =  \int_{B_{r_{1}/2}(\bar{x})} \Delta_y H_t(x,y) \, \psi(y) \, e(y) \, \d\mm(y) \nonumber \\
&=&    \int_{B_{r_{1}/2}(\bar{x})} H_t(x,y) \,  \d (\Delta^\star_y  e_0 (y))  \leq    \frac{C(K,N)}{r_{1}}   \int_{B_{r_{1}/2}(\bar{x})} H_t(x,y) \,  \d \mm(y) \leq 
  \frac{C(K,N)}{r_{1}} \quad.  \nonumber
 \end{eqnarray}
\end{proof}

\begin{lemma}\label{lem:EstHtReg}
There exists $C=C(K,N)$ such that the following hold
\\(i) For all $x \in B_{r_{1}/4}(\bar{x})$ one has
\begin{equation}\label{eq:LinfEstet}
e_{t}(x) \leq  e(x) +  \frac{C(K,N)} {r_{1}} t.
\end{equation}
(ii) For all $x \in B_{r_{1}/4}(\bar{x})$ one has
\begin{equation}\label{eq:LinfEstht}
|h^{\pm}_{t}-\sfd^{\pm}|(x) \leq e(x)+ \frac{C(K,N)}{r_1} t.
\end{equation}
(iii) For all $x \in X$ one has
\begin{equation}
|D h^{\pm}_t|(x)+ t  |\Delta h^{\pm}_t |^2(x) \leq   1+ C(K,N) \frac{ t}{r_{1}^{2}}.
\end{equation}
\\(iv) For every $r_{0} \leq \frac{r_{1}}{8}$ and $t\in [1,r_{0}]$ it holds
\begin{equation}\label{eq:L2Esthd}
\int_{B_{r_{0}}(\bar{x})} |D(h^{\pm}_{t}-\sfd^{\pm})|^{2} \, \d\mm \leq C(K,N,r_{0}) \, \left[ \sup_{x \in B_{2 r_{0}}(\bar{x})} e(x) +  \frac{1}{r_1} \right].
\end{equation}

\end{lemma}

\begin{proof}
(i). From Lemma \ref{lem:Laphtet} we know that $\Delta e_t$ is continuous and satisfies the estimate \eqref{eq:Laphtet}. Since by definition $e_t$ solves the heat equation, we get
$$e_t(x)=e_0(x)+\int_0^t \Delta e_s(x) \, \d s \leq e(x) + \frac{C(K,N)}{r_{1}} t,   \quad \forall x \in B_{r_{1}/4}(\bar{x}).\, $$
(ii). To get the second claim observe that exactly as above, using Lemma \ref{lem:Laphtet}, we get  
$$h^-_{t}(x)\leq \sfd^-(x) +  \frac{C(K,N)}{r_1} t  \quad \text{and} \quad \sfd^+(x) -  \frac{C(K,N)}{r_1} t  \leq  h^+_{t}(x) \quad,   $$
or,  in other words,
$$h^-_{t}(x)- \sfd^-(x)\leq  \frac{C(K,N)}{r_1} t    \quad \text{and} \quad   -(h^+_{t}(x)-\sfd^+(x)) \leq  \frac{C(K,N)}{r_1} t. $$
The reverse inequalities follow from the first claim \eqref{eq:LinfEstet} combined with the identity
$$h^-_{t}(x)- \sfd^-(x)= h^+_{t}(x)- \sfd^+(x)+e_{t}(x)- e(x)\quad.$$
(iii). 
First of all observe that by the Bakry-Ledoux estimate \eqref{eq:BakryLedoux} we have  
\begin{equation}\label{eq:BL}
|D h^{\pm}_t|+\frac{4Kt^2}{N  r_{1}^{2} (e^{2Kt/r_{1}^{2}}-1)} |\Delta h^{\pm}_t |^2 \leq e^{-2Kt/r_{1}^{2}} \, \H_t\left( |D h^{\pm}_0|^2\right)  \quad \text{pointwise.}
\end{equation}
Moreover, the very definition of $h_0^\pm$ yields 
$$|D h_0^{\pm}| \leq |D\psi| \, \sfd^{\pm}+ \psi \, |D \sfd^{\pm}| \leq C(K,N), \quad |Dh_0^{\pm}|\equiv 1 \;\text{ on } B_{r_{1}/4}(\bar{x}), \quad  |Dh_0^{\pm}| \equiv 0 \;\text{ outside } B_{r_{1}/2}(\bar{x}).$$
Therefore we can estimate
\begin{eqnarray}
\H_t\left( |D h^{\pm}_0|^2\right)(x)&=& \int_{B_{\frac{r_{1}}{2}}(\bar{x})} H_t(x,y) |D h_0^{\pm}|^2(y) \, \d \mm(y) \leq \int_{B_{\frac{r_{1}}{4}}(\bar{x})} H_t(x,y) \, \d \mm(y) + C  \int_{B_{\frac{r_{1}}{2}}\setminus B_{\frac{r_{1}}{4}}(\bar{x})} H_t(x,y) \, \d \mm(y) \nonumber \\
& \leq& 1+ C(K,N) \frac{ t}{r_{1}^{2}} \quad, \label{eq:estHtDh0}
\end{eqnarray}
where in last inequality  we  used the second part of  Lemma \ref{lem2.3} (note that we can replace $C(K,N,R)$ by $C(K,N)$ since we are assuming $\RCD^{*}(K/r_{1}^{2},N)$ instead of $\RCD^{*}(K,N)$).  The thesis follows by  the combination of   \eqref{eq:BL} and \eqref{eq:estHtDh0}. 
\\(iv).  Let $\phi:X\to [0,1]$ be a $1/r_{0}$-Lipschitz cut-off function with $\phi\equiv 1$ on  $B_{r_{0}}(\bar{x})$ and $\phi\equiv 0$ on  $X\setminus B_{2r_{0}}(\bar{x})\supset X\setminus B_{r_{1}/4}(\bar{x})$. By using the previous items (ii) and (iii) together with the Laplacian comparison Theorem  \ref{thm:LapComp}, for $t\in [1,r_{0}]$ we get
 \begin{align}
\int_{B_{r_{0}}(\bar{x})} |D (h^{-}_{t} -\sfd^{-})|^2 \,	\d \mm& \leq \int_{B_{2r_{0}}(\bar{x})}  |D (h^{-}_{t} -\sfd^{-})|^2 \, \phi \, \d \mm  \nonumber  \\
&= -  \int_{B_{2r_{0}}(\bar{x})}   (h^{-}_{t} -\sfd^{-}) \, \Gamma \left(\phi, h^{-}_{t} -\sfd^{-}\right)  \, \d \mm - \int_{B_{2r_{0}}(\bar{x})}  (h^{-}_{t} -\sfd^{-}) \, \phi  \, \d \left({\Delta^{\star}}(h^{-}_{t} -\sfd^{-}) \right)   \nonumber  \\
&\leq C(K,N,r_{0})\, \|h^{-}_{t}-\sfd^{-} \|_{L^{\infty}(B_{2r_{0}}(\bar{x}))}   +  \int_{B_{2r_{0}}(\bar{x})}   \|h^{-}_{t}-\sfd^{-} \|_{L^{\infty}(B_{2r_{0}}(\bar{x}))}\,  \phi  \, \d \left( {\Delta^{\star}} ( h^{-}_{t}- \sfd^{-} ) \right)              \nonumber  \\
& +   \int_{B_{2r_{0}}(\bar{x})}  \big( \|h^{-}_{t}-\sfd^{-} \|_{L^{\infty}(B_{2r_{0}}(\bar{x}))} + (h^{-}_{t}-\sfd^{-}) \big) \, \phi  \, \d \left( {\Delta^{\star}}  (\sfd^{-}-h^{-}_{t}) \right)      \nonumber  \\
&\leq C(K,N,r_{0})\, \|h^{-}_{t}-\sfd^{-} \|_{L^{\infty}(B_{2r_{0}}(\bar{x}))}        -    \|h^{-}_{t}-\sfd^{-} \|_{L^{\infty}(B_{2r_{0}}(\bar{x}))} \int_{B_{2r_{0}}(\bar{x})} \,  \Gamma\left(\phi,  ( h^{-}_{t}- \sfd^{-} ) \right) \, \d \mm    \nonumber  \\
& +   2\|h^{-}_{t}-\sfd^{-} \|_{L^{\infty}(B_{2r_{0}}(\bar{x}))}   \int_{B_{2r_{0}}(\bar{x})}  \, \phi  \, \d \left( {\Delta^{\star}} ( \sfd^{-}-h^{-}_{t}) \right)           \nonumber  \\
&\leq   C(K,N,r_{0})\, \|h^{-}_{t}-\sfd^{-} \|_{L^{\infty}(B_{2r_{0}}(\bar{x}))} \leq  C(K,N,r_{0}) \, \left[ \sup_{x \in B_{2 r_{0}}(\bar{x})} e(x) +  \frac{1}{r_1} \right].   \nonumber
\end{align}

\end{proof}

\section{Construction of Gromov-Hausdorff approximations with  estimates}\label{s:Gromov_Hausdorff}
Thanks to Theorem \ref{thm:euclideantangents} we already know that  at $\mm$-almost every $x \in X$ there exists $k \in \N$, $1 \le k \le N$, such that 
 $$
  (\R^k,\sfd_{E},\LL_k,0) \in \Tan(X,\sfd,\mm,x) \quad.  
 $$
The goal of the present section is to prove an explicit Gromov-Hausdorff approximation with estimates at such points by using the results of Section \ref{S:EstHeatApprox}. More precisely we prove the following.

 \begin{theorem}\label{lem:ConstrU}
Let $(X,\sfd,\mm)$ be an $\RCD^*(K,N)$-space for some  $K \in \R$, $N \in (1,\infty)$ and let $\bar{x} \in X$ be such that  $(\R^k,\sfd_{E},\LL_k,0) \in \Tan(X,\sfd,\mm,\bar{x})$ for some $k \in \N$. Then, for every  $0<\varepsilon_2<<1$ there exist $\bar{R}=\bar{R}(\vare_2)>>1$ such that for every $\tilde{R}\geq \bar{R}$ there exists $R\geq \tilde{R}$  and there exists $0<r=r(\bar x, \vare_2,R)<< 1$ such that the following holds.

Call  $(X,  \sfd_r, \mm^{\bar{x}}_r, \bar{x})$  the rescaled p.m.m.s., where $\sfd_r(\cdot,\cdot):=r^{-1} \sfd(\cdot, \cdot)$ and  $ \mm^{\bar{x}}_r$ was defined in \eqref{eq:normalization}. There exist points $\{p_i,q_i\}_{i=1,\ldots,k}\subset \partial B_{R^\beta}^{\sfd_r}(\bar{x})$ and $\{p_i+p_j\}_{1\leq i<j\leq k}\in  B_{2R^\beta}^{\sfd_r}(\bar{x})\setminus B_{R^\beta}^{\sfd_r}(\bar{x})$, for some $\beta=\beta(N)>2$, such that \footnote{``$p_i+p_j$'' is just a symbol indicating a point of $X$, no affine structure is assumed}
\begin{eqnarray}
\sum_{i=1}^k \fint _{B^{\sfd_r}_R(\bar{x})} |D e_{p_i,q_i}|^2 \, \d \mm^{\bar{x}}_r &+& \sup_{y \in B^{\sfd_r}_R(\bar{x})} e_{p_i,q_i} (y) \leq \varepsilon_2\quad, \label{eq:estUexc} \\
 \sum_{1\leq i< j\leq k} \fint_{B^{\sfd_r}_R(\bar{x})}  \left|D \left( \frac{\sfd_r^{p_i}+\sfd_r^{p_j}} {\sqrt 2} - \sfd_r^{p_i+p_j}\right) \right|^2 \, \d \mm^{\bar{x}}_r &+& \sup_{B^{\sfd_r}_R(\bar{x})}  \left| \frac{\sfd_r^{p_i}+\sfd_r^{p_j}} {\sqrt 2} - \sfd_r^{p_i+p_j}\right|   \leq \varepsilon_2\quad, \label{eq:estUdidj} 
\end{eqnarray} 
where $\sfd^{p_i}_r(\cdot):=\sfd_r(p_i, \cdot):= r^{-1}\sfd(p_i,\cdot),$ the excess $e_{p_i,q_i}$ is defined by $e_{p_i,q_i}(\cdot):=\sfd^{p_i}_r(\cdot)+\sfd^{q_i}_r(\cdot)-\sfd_r(p_i,q_i)$ and  the slope $|D\cdot|$ is intended to be computed with respect to the rescaled structure $(X, r^{-1} \sfd, \mm^{\bar{x}}_r)$.
 \end{theorem}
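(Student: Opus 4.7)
The plan is to realize the points $p_i,q_i,p_i+p_j$ as p-mGH preimages of the Euclidean points $\pm R^\beta e_i$ and $R^\beta(e_i+e_j)$ in the tangent $(\R^k,\sfd_E,\LL_k,0)$, and to transport the required estimates from $\R^k$ to $X$ through the mGH approximation, invoking Theorem \ref{thm:GradExc} for the excess and a direct total-variation argument for the sum-relation. Fix $\beta=\beta(N)>2$. By the tangent hypothesis, for every $\delta>0$ and every large $S>0$ there is $r=r(\delta,S)>0$ such that $(X,\sfd_r,\mm^{\bar x}_r,\bar x)$ is $\delta$-close in p-mGH to $(\R^k,\sfd_E,\LL_k,0)$ on $B_S$. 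Taking $S\gg R^\beta$ and choosing $p_i,q_i,p_i+p_j\in X$ as approximate mGH-preimages of $\pm R^\beta e_i$ and $R^\beta(e_i+e_j)$ (perturbing mildly along radial geodesics so that $p_i,q_i\in\partial B_{R^\beta}^{\sfd_r}(\bar x)$), all pairwise $\sfd_r$-distances among these points and to any $y\in B_R^{\sfd_r}(\bar x)$ match their Euclidean analogues up to an additive error $O(\delta)$.

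The supremum parts of \eqref{eq:estUexc} and \eqref{eq:estUdidj} follow from explicit Taylor expansions on $\R^k$: for $|x|\le R\ll R^\beta$ one has $e_{R^\beta e_i,-R^\beta e_i}(x)=(|x|^2-\langle e_i,x\rangle^2)/R^\beta+O(|x|^4/R^{3\beta})$, and an analogous $O(|x|^2/R^\beta)$ expansion bounds the Euclidean sum-defect $\tfrac{1}{\sqrt 2}(\sfd^{R^\beta e_i}+\sfd^{R^\beta e_j})-\sfd^{R^\beta(e_i+e_j)}$; adding the GH transport error gives $O(R^{2-\beta})+O(\delta)$ on $B_R^{\sfd_r}(\bar x)$, which is at most $\varepsilon_2/(2k^2)$ once $R$ is large enough and $\delta$ is small enough. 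For the $L^2$-gradient part of \eqref{eq:estUexc} I apply Theorem \ref{thm:GradExc} at $x=\bar x$ with $\rho:=R/\sfd_r(p_i,q_i)\simeq R^{1-\beta}$: the hypothesis $e_{p_i,q_i}(\bar x)\le\rho^2\sfd_r(p_i,q_i)\simeq R^{2-\beta}$ is satisfied by the sup bound, and $B_{2\rho\sfd_r(p_i,q_i)}^{\sfd_r}(\bar x)=B_{2R}^{\sfd_r}(\bar x)\subset A_{R^\beta/2,\,2R^\beta}(\{p_i,q_i\})$ because $p_i,q_i$ sit at $\sfd_r$-distance $\simeq R^\beta\gg R$ from $\bar x$. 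Theorem \ref{thm:GradExc} then yields $\fint_{B_R^{\sfd_r}(\bar x)}|De_{p_i,q_i}|^2\le C(K,N)\,\rho^{1+\alpha(N)}\le\varepsilon_2/(2k^2)$.

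The genuinely delicate point---and the main obstacle---is the $L^2$-gradient bound for the sum-relation $f_{ij}:=\tfrac{1}{\sqrt 2}(\sfd_r^{p_i}+\sfd_r^{p_j})-\sfd_r^{p_i+p_j}$: unlike the excess, $f_{ij}$ changes sign and $\Delta^\star f_{ij}$ admits no one-sided comparison bound, so Theorem \ref{thm:GradExc} is unavailable. My plan is instead to control the total variation $|\Delta^\star f_{ij}|$ on $B_R^{\sfd_r}(\bar x)$ directly. By Theorem \ref{thm:LapComp}, each of $\Delta^\star\sfd_r^{p_i},\Delta^\star\sfd_r^{p_j},\Delta^\star\sfd_r^{p_i+p_j}$ has positive part bounded by $C(K,N)/R^\beta\cdot\mm^{\bar x}_r$ on $B_R^{\sfd_r}(\bar x)$, since the base points lie at $\sfd_r$-distance $\simeq R^\beta$. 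The negative part is controlled through the distributional identity $\int\varphi\,d\Delta^\star\sfd_r^p=-\int\Gamma(\sfd_r^p,\varphi)\,d\mm^{\bar x}_r$ tested against the cutoff $\varphi$ provided by Lemma \ref{lem:cutoff} for $B_R^{\sfd_r}(\bar x)\Subset B_{2R}^{\sfd_r}(\bar x)$: from $\mathrm{Lip}(\sfd_r^p)\le 1$ and $|\nabla\varphi|\le C/R$ one obtains $(\Delta^\star\sfd_r^p)^-(B_R^{\sfd_r}(\bar x))\le C(K,N)\mm^{\bar x}_r(B_R^{\sfd_r}(\bar x))/R$, whence
\[
|\Delta^\star f_{ij}|(B_R^{\sfd_r}(\bar x))\le C(K,N)\,\mm^{\bar x}_r(B_R^{\sfd_r}(\bar x))/R.
\]
Integrating by parts against the same $\varphi$,
\[
\int|Df_{ij}|^2\,\varphi\,d\mm^{\bar x}_r=-\int f_{ij}\,\Gamma(f_{ij},\varphi)\,d\mm^{\bar x}_r-\int f_{ij}\,\varphi\,d\Delta^\star f_{ij},
\]
and bounding both right-hand terms via $\|f_{ij}\|_\infty\le\varepsilon_2$, $\mathrm{Lip}(f_{ij})\le\sqrt 2+1$, and the preceding total-variation estimate, one gets $\fint_{B_R^{\sfd_r}(\bar x)}|Df_{ij}|^2\le C(K,N)\,\varepsilon_2/R\le\varepsilon_2/k^2$ for $R$ large. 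The parameters are finally coordinated in the order: $\beta(N)>2$, then $R\ge\max(\tilde R,R_0(K,N,k,\varepsilon_2))$ picked from the tangent-giving sequence, then $\delta\ll\varepsilon_2$, and finally $r=r(\bar x,R,\delta)$ from the p-mGH approximation, closing the argument.
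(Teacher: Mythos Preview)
Your argument is essentially correct and close in spirit to the paper's, but there is one genuine technicality you have skipped, and one place where your route differs from the authors'.

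\textbf{The normalization issue in Theorem~\ref{thm:GradExc}.} You invoke Theorem~\ref{thm:GradExc} directly in $(X,\sfd_r,\mm^{\bar x}_r)$ with $\sfd_r(p_i,q_i)\simeq 2R^\beta\gg 1$, but that theorem carries the standing hypothesis $\sfd(p,q)\le 1$. As written your application is illegitimate. The fix is easy: further rescale by $(2R^\beta)^{-1}$; the resulting space is $\RCD^*\big((2R^\beta r)^2K,N\big)$, and since $r$ is chosen after $R$ you may enforce $2R^\beta r\le 1$, whence the constant $C(K,N,\varepsilon)$ stays uniformly bounded and the theorem applies with $\rho\simeq R^{1-\beta}$ exactly as you want. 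The paper avoids this bookkeeping by reversing the order of operations: it first chooses $p_i,q_i$ at $\sfd_r$-distance $\simeq 1/4$ from $\bar x$ (so Theorem~\ref{thm:GradExc} applies verbatim, using the geodesic midpoint $\xi_i$ where the excess vanishes exactly), obtains the estimate on $B^{\sfd_r}_\varepsilon(\bar x)$, and only \emph{then} performs a second rescaling by $R^\beta$ with $\varepsilon=R^{1-\beta}$. The two routes are equivalent after unwinding the scalings.

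\textbf{The sum-relation gradient.} Here your argument genuinely differs from the paper's. The paper does not estimate $|\Delta^\star f_{ij}|$ in total variation; instead it writes
\[
|D(u^{ij}-v^{ij})|^2\varphi=-\Gamma\big(u^{ij},\,(\textstyle\sup|u^{ij}-v^{ij}|)-(u^{ij}-v^{ij})\big)\varphi-\Gamma\big(v^{ij},\,(\textstyle\sup|u^{ij}-v^{ij}|)+(u^{ij}-v^{ij})\big)\varphi,
\]
so that after integration by parts the test functions multiplying $d\Delta^\star u^{ij}$ and $d\Delta^\star v^{ij}$ are nonnegative, and the \emph{one-sided} Laplacian comparison $\Delta^\star u^{ij},\Delta^\star v^{ij}\le C R^{1-\beta}\mm$ suffices. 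Your approach---bounding $\mu^+$ via Laplacian comparison and $\mu^-$ via testing against the cutoff, hence controlling $|\Delta^\star f_{ij}|(B_R)\le C\mm(B_R)/R$---is a legitimate alternative and arguably more transparent; the paper's ``shift by the sup'' trick buys a slightly shorter computation by never needing the total-variation bound. Both yield $\fint_{B_R}|Df_{ij}|^2\le C(\sup_{B_{2R}}|f_{ij}|)/R$, which is what is required.

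One small point: in your integration by parts for $f_{ij}$ you need $\|f_{ij}\|_{L^\infty}$ and the total-variation bound on $B_{2R}$ (the support of $\varphi$), not just $B_R$; the same Taylor expansion and the same cutoff argument give these with identical exponents, so this is harmless but should be said.
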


  
 \begin{proof} Since by assumption $(\R^k,\sfd_{E},\LL_k,0^k) \in \Tan(X,\sfd,\mm,\bar x)$ then, for every $0<\vare_1\leq 1/10$ there exists $r=r(\vare_1, \bar{x})>0$ such that
\begin{equation}\label{eq:XRkeps1}
\DC \left( \left(X, \sfd_r, \mm^{\bar{x}}_r, \bar{x} \right), \left( \R^k,\sfd_{E},\LL_k,0^k  \right) \right) \leq  \varepsilon_1\quad . 
\end{equation}
In particular we can find $\delta$-quasi isometries from $ B^{\sfd_r}_{1}(\bar{x})\subset X$ to $B^{\sfd_{E}}_{1}(0^{k})\subset \R^{k}$, with $\delta=\delta(\vare_{1})\to 0$ as $\vare_{1}\to 0$.
 Observe that, by the rescaling property \eqref{eq:cdinv}, $(X, \sfd_r, \mm^{\bar{x}}_r)$ is an $\RCD^*(r^2 K,N)$-space. Now let 
 \begin{equation}\label{eq:defpiqi}\nonumber
 p_i, q_i, p_i+p_j\in  B^{\sfd_r}_{1/2}(\bar{x})\subset X \text{ be the points corresponding  to } -\frac{\vec{e}_i}{4}, \frac{\vec{e}_i}{4}, -\frac{\vec{e}_i+\vec{e}_j}{4}  \in  B^{\sfd_{E}}_{\sqrt{2}/4}(0^k)\subset \R^k 
 \end{equation}
 respectively via the $\delta$-quasi isometry ensured by  \eqref{eq:XRkeps1}, for every $1\leq i,j \leq k$, where $(\vec{e}_i)$ is the standard basis of $\R^k$. Let us explicity note that $1/4 \leq \sfd_r(p_i,q_i) \leq 1$, for every $i=1, \ldots, k$.
 
 Consider a minimizing geodesic $\gamma_i$ connecting $p_i$ and $q_i$. Then, combining  the excess estimate Corollary \ref{cor:AbrGrom}  and the excess gradient estimate Theorem \ref{thm:GradExc}, called $\xi_i:=\gamma_i\left(\frac{\sfd_r(p_i,q_i)}{2}\right)\in X$ (so, in  particular, $e_{p_i,q_i}(\xi_i)=0$), we have 
 \begin{equation}\label{eq:epiqi}\nonumber
\left( \sup_{y \in B^{\sfd_r}_{2\vare}(\xi_i)} e_{p_i,q_i} (y) \right)+  \fint_{B^{\sfd_r}_{2\vare}(\xi_i)} |D \, e_{p_i,q_i}|^2 (y) \, \d \mm^{\bar x}_r(y) \leq C(K,N)\, \vare^{1+\alpha(N)}, \quad \forall 0<\vare\leq \bar{\vare}(K,N). 
 \end{equation} 
 Moreover, for $\vare_1=\vare_1(K,N,\vare)>0$ small enough in \eqref{eq:XRkeps1} we also have $\xi_i \in B^{\sfd_r}_{\vare}(\bar{x})$, so $B^{\sfd_r}_{\vare}(\bar{x})\subset B^{\sfd_r}_{2\vare}(\xi_i)$ and, by the doubling property of $\mm^{\bar x}_r$, we infer that
 \begin{equation}\label{eq:Depiqi}
\left( \sup_{y \in B^{\sfd_r}_{\vare}(\bar{x})} e_{p_i,q_i} (y) \right)+\fint_{B^{\sfd_r}_{\vare}(\bar{x})} |D \, e_{p_i,q_i}|^2 (y) \, \d \mm^{\bar x}_r(y) \leq C(K,N)\, \vare^{1+\alpha(N)}, \quad \forall 0<\vare\leq \bar{\vare}(K,N). 
 \end{equation}

Now we do a second rescaling of the metric, namely we consider the new metric $\sfd_{R^{-\beta} r}:=R^\beta \sfd_r=\frac{R^\beta}{r} \sfd$, for $\beta\geq \beta(N)>\max\{ 1+\frac{1}{\alpha(N)}, 2\}$ and observe that by obvious rescaling properties, having chosen $\vare=R^{-\beta+1}$, estimate \eqref{eq:Depiqi} implies
 \begin{equation}\label{eq:DepiqiR}
 \sup_{B^{\sfd_{R^{-\beta} r}}_{R}(\bar{x})} e_{p_i,q_i} \leq C(K,N)\frac{1}{R^{\alpha(\beta-1)-1}} \quad \text{ and } \quad   \fint_{B^{\sfd_{R^{-\beta} r}}_{R}(\bar{x})} |D \, e_{p_i,q_i}|^2 (y) \, \d \mm^{\bar x}_{R^{-\beta}r}(y) \leq C(K,N)\, \frac{1}{R^{(\beta-1)(1+\alpha)}}.
  \end{equation} 
The proof of \eqref{eq:estUexc} is therefore complete once we choose $R\geq \bar{R}(K,N,\vare_2)>>1$.
\\

Now we prove \eqref{eq:estUdidj}. Again, since by assumption  $(\R^k,\sfd_{E},\LL_k,0^k) \in \Tan(X,\sfd,\mm,\bar x)$,  for every $0<\theta \leq 1/10$ there exists $R\geq \bar{R}(K,N,\vare_2)>>1$ such that
\begin{equation}\label{eq:XRketa}
\DC \left( \left(X, \sfd_{R^{-\beta} r}, \mm^{\bar{x}}_{R^{-\beta} r}, \bar{x} \right), \left( \R^k,\sfd_{E},\LL_k,0^k  \right) \right) \leq  \theta \quad . 
\end{equation}
In particular, for some $\eta=\eta(\theta)\to 0$ as $\theta\to 0$, we have that $\sfd_{R^{-\beta} r}(p_i, \cdot), \sfd_{R^{-\beta} r}(p_i+p_j, \cdot)$ are $\eta$-close in $L^\infty(B_{R}^{\sfd_{R^{-\beta} r}}(\bar x))$ norm (via composition with a GH quasi-isometry) to  $\sfd_{E}( -R^\beta \vec{e}_i/4 , \cdot), \sfd_{E}(-R^\beta(\vec{e}_i+\vec{e}_j)/4 , \cdot)$ respectively. Moreover, in euclidean metric, we have that 
\begin{equation}\label{eq:eucleta}
\left| \frac{\sfd_{E}( - R^\beta \vec{e}_i/4 , \cdot)+ \sfd_E( - R^\beta \vec{e}_j/4 , \cdot)}{\sqrt 2} -\sfd_{E}(-R^{\beta}(\vec{e}_i+\vec{e}_j)/4 , \cdot)\right| \leq \eta \quad \text{on } B_R(0^k), \quad \forall 1\leq i <j \leq k,
\end{equation}
for $R\geq  R(\eta)$ large enough; an easy way to see it is to observe that $$\left| \frac{\sfd_E( -\vec{e}_i/4, \cdot)+ \sfd_E( -\vec{e}_j/4 , \cdot)}{\sqrt 2} -\sfd_{E}(-(\vec{e}_i+\vec{e}_j)/4 , \cdot)\right| \leq \bar{C} \vare^2 \quad \text{on } \; B_{\vare}(0^k) $$ 
 by a second order Taylor expansion at $0^k$, then rescale by $R^\beta$, choose $\vare^{-1}:=R^{\beta-1}$ and $R\geq \left(\frac{\bar{C}}{\eta}\right)^{\frac{1}{\beta-2}}$.
Combining \eqref{eq:XRketa}   and \eqref{eq:eucleta} we get 
\begin{equation} \label{eq:4eta}
\left| \frac{\sfd_{R^{-\beta} r}( p_i, \cdot)+ \sfd_{R^{-\beta} r}( p_j, \cdot)}{\sqrt 2} -\sfd_{R^{-\beta} r}( p_i+p_j, \cdot)\right| \leq 4 \eta \quad \text{on } B^{\sfd_{R^{-\beta} r}}_R(\bar x), \quad \forall 1\leq i <j \leq k.
\end{equation}
In order to conclude the proof we next show that 
\begin{equation}\label{eq:ClaimOrt}
{\mathfrak {I}}:= \fint_{B^{\sfd_{R^{-\beta} r}}_R(\bar{x})} \left|D \left( \frac{\sfd_{R^{-\beta} r}^{p_i}+\sfd_{R^{-\beta} r}^{p_j}} {\sqrt 2} - \sfd_{R^{-\beta} r}^{p_i+p_j}\right) \right|^2 \, \d \mm_{R^{-\beta} r}^{\bar{x}} \leq C(K,N)\; \frac{\eta}{R}    \quad ,
\end{equation}
which, together with  \eqref{eq:4eta}, will give \eqref{eq:estUdidj}  by choosing $\bar{R}=\bar{R}(\vare_2)$ large enough. \\
To this aim let $\varphi$ be a $1/R$-Lipschitz cut-off function with $0\leq \varphi \leq 1$, $\varphi\equiv 1$ on $B^{\sfd_{R^{-\beta} r}}_R(\bar{x})$ and $\supp \varphi \subset B^{\sfd_{R^{-\beta} r}}_{2R}(\bar{x})$; in order to simplify the notation let us denote 
\begin{equation}\label{eq:defuijvij}
u^{ij}:= \frac{\sfd_{R^{-\beta} r}^{p_i}+\sfd_{R^{-\beta} r}^{p_j}} {\sqrt 2} \quad  \text{and} \quad  v^{ij}:=\sfd_{R^{-\beta} r}^{p_i+p_j}\quad .
\end{equation}
 With an integration by parts together with \eqref{eq:4eta} and the Laplacian comparison Theorem \ref{thm:LapComp} (which in particular gives that $\Delta^\star v^{ij} \llcorner B^{\sfd_{R^{-\beta} r}}_{2R}(\bar{x}),\Delta^\star u^{ij}\llcorner B^{\sfd_{R^{-\beta} r}}_{2R}(\bar{x}) \leq  C(K,N) \frac{1}{R^{\beta-1}}\, \mm\llcorner B^{\sfd_{R^{-\beta} r}}_{2R}(\bar{x})$) yields
\begin{eqnarray}
{\mathfrak {I}} &\leq & C(K,N) \fint_{B^{\sfd_{R^{-\beta} r}}_{2R}(\bar{x})} \left|D (u^{ij}-v^{ij}) \right|^2 \, \varphi \, \d \mm_{R^{-\beta} r}^{\bar{x}} \nonumber \\
&=& - C(K,N)\Big[ \fint_{B^{\sfd_{R^{-\beta} r}}_{2R}(\bar{x})} \Gamma \Big(u^{ij}, \big(\sup_{{B^{\sfd_{R^{-\beta} r}}_{2R}(\bar{x})} } |u^{ij}-v^{ij}| \big)-(u^{ij}-v^{ij}) \Big) \, \varphi \, \d \mm_{R^{-\beta} r}^{\bar{x}}  \nonumber \\
&& \quad \quad \quad \quad  + \fint_{B^{\sfd_{R^{-\beta} r}}_{2R}(\bar{x})} \Gamma\Big(v^{ij}, \big(\sup_{{B^{\sfd_{R^{-\beta} r}}_{2R}(\bar{x})} } |u^{ij}-v^{ij}| \big)+(u^{ij}-v^{ij}) \Big) \, \varphi \, \d \mm_{R^{-\beta} r}^{\bar{x}}   \Big]\nonumber \\
&\leq& C(K,N) \Big(\sup_{{B^{\sfd_{R^{-\beta} r}}_{2R}(\bar{x})} } |u^{ij}-v^{ij}| \Big) \left[  \frac{1}{R^{\beta-1}}+\frac{1}{R}  \right] \leq C(K,N)\; \eta \;  \left[  \frac{1}{R^{\beta-1}}+\frac{1}{R}  \right] \quad , \nonumber
\end{eqnarray}
which proves our claim \eqref{eq:ClaimOrt}. 
\end{proof}

\section{Almost  splitting via excess}\label{Sec:AlmSplit}
The interest of the almost splitting theorem we prove in this section is that the condition on the existence of an {\it almost line} is replaced by an assumption on the smallness of the excess and its derivative;  this will be convenient in the proof of  the rectifiability thanks to estimates on the Gromov-Hausdorff approximation  proved in Theorem \ref{lem:ConstrU}.   From the technical point of view our strategy is to argue by contradiction and to construct an appropriate replacement for the Busemann function, which is a priori not available since we do not assume the  existence of a long geodesic.  Then in the limit we may rely on the arguments used in Gigli's proof of the Splitting Theorem in the non smooth setting (see \cite{GigliSplitting}-\cite{GigliSplittingSur}) in order to construct our splitting.

\begin{theorem}[Almost splitting via excess]\label{thm:AlmSplit}
Fix $N\in (1, +\infty)$ and $\beta>2$. For every $\vare>0$ there exists a $\delta=\delta(N,\vare)>0$  such that if the following hold

i) $(X,\sfd,\mm)$ is an $\RCD^*(-\delta^{2\beta},N)$-space,

ii) there exist points  $\bar{x}, \{p_i,q_i\}_{i=1,\ldots, k}, \{p_i+p_j\}_{1\leq i< j \leq k}$ of $X$, for some $k\leq N$,  such that \footnote{``$p_i+p_j$'' is just a symbol indicating a point of $X$, no affine structure is assumed} $\sfd(p_i,\bar{x}), \sfd(q_i,\bar{x}),  \sfd(p_i+p_j,\bar{x})\geq \delta^{-\beta}$,
\begin{equation} \nonumber
\sum_{i=1}^{k} \sup_{B_R(\bar{x})} e_{p_{i},q_{i}} + \sum_{i=1}^k \fint _{B_R(\bar{x})} |D e_{p_i,q_i}|^2 \, \d \mm + \sum_{1\leq i< j\leq k} \fint_{B_R(\bar{x})}  \left|D \left( \frac{\sfd^{p_i}+\sfd^{p_j}} {\sqrt 2} - \sfd^{p_i+p_j}\right) \right|^2 \, \d \mm   \leq \delta  \quad,  
\end{equation} 
for every $R\in [1,\delta^{-1}]$.
 Then there exists a p.m.m.s. $(Y,\sfd_Y, \mm_Y, \bar{y})$ such that
 $$\DC \left( \left(X, \sfd, \mm, \bar{x} \right), \left( \R^k\times Y,\sfd_{\R^k\times Y},\mm_{\R^k\times Y}, (0^k, \bar{y})  \right) \right) \leq  \vare \quad . 
 $$
 More precisely
 
 1) if $N-k<1$ then $Y=\{\bar{y}\}$ is a singleton,  and if $N-k\in [1,+\infty)$ then  $(Y,\sfd_Y, \mm_Y)$ is an $\RCD^*(0,N-k)$-space,
 
 2) there exist maps $v:X\supset B_{\delta^{-1}}(\bar{x})\to Y$ and $u:X\supset B_{\delta^{-1}}(\bar{x}) \to \R^k$ given by $u^i(x)=\sfd(p_i,x)-\sfd(p_i,\bar{x})$ such that the product map
 $$(u,v):X \supset B_{\delta^{-1}}(\bar{x})\to Y\times \R^k \quad \text{is a measured GH $\vare$-quasi isometry on its image}. $$ 
\end{theorem}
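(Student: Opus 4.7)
\textbf{(Setup by contradiction.)} The approach I would take is proof by contradiction combined with pmGH compactness. Assume the statement fails: there exist $\vare>0$, a sequence $\delta_n\downarrow 0$, and $\RCD^*(-\delta_n^{2\beta}, N)$-spaces $(X_n, \sfd_n, \mm_n, \bar{x}_n)$ equipped with points $\{p_i^n, q_i^n, p_i^n+p_j^n\}$ satisfying hypotheses (i)--(ii) at scale $\delta_n$, yet for which no p.m.m.s.\ of the form $\R^k \times Y_n$ lies within $\DC$-distance $\vare$ of $(X_n, \sfd_n, \mm_n, \bar{x}_n)$. After renormalization, compactness of $\MM_{C(\cdot)}$ (Proposition 2.4) and stability of the $\RCD^*$ condition under pmGH convergence (Theorem 2.10) yield a subsequential limit $(X_\infty, \sfd_\infty, \mm_\infty, \bar{x}_\infty)$ that is $\RCD^*(0, N)$. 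Since $\sfd_n(p_i^n, \bar{x}_n) \geq \delta_n^{-\beta} \to \infty$, the points $p_i^n, q_i^n, p_i^n + p_j^n$ leave every metric ball in the limit picture.

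\textbf{(Constructing Busemann-like functions.)} For each $i$, set $b_{p_i}^n(x) := \sfd_n(p_i^n, x) - \sfd_n(p_i^n, \bar{x}_n)$ and analogously $b_{q_i}^n$, $b_{p_i+p_j}^n$; each is $1$-Lipschitz with value $0$ at $\bar{x}_n$. By the Arzel\`a--Ascoli principle for varying spaces (Proposition 2.15), along a subsequence they converge uniformly on compact sets to $1$-Lipschitz functions $b_{p_i}^\infty, b_{q_i}^\infty, b_{p_i+p_j}^\infty$ on $X_\infty$. The sup bound on the excess inside $B_R(\bar{x}_n)$, which follows from the classical Abresch--Gromoll estimate (Corollary 3.7) applied at scale $\sfd_n(p_i^n, q_i^n) \sim \delta_n^{-\beta}$ together with $R\leq\delta_n^{-1}$, forces $b_{q_i}^\infty = -b_{p_i}^\infty$. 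The $L^2$ hypothesis on $|De_{p_i,q_i}|$ combined with lower semicontinuity of the slope (Proposition 2.16) gives $\Gamma(b_{p_i}^\infty + b_{q_i}^\infty) = 0$ a.e., and since each $|Db_{p_i}^\infty|\leq 1$ this rigidity actually yields $|Db_{p_i}^\infty|^2 = 1$ $\mm_\infty$-a.e. Passing to the limit in Theorem 2.8 gives $\Delta^\star b_{p_i}^\infty \leq 0$ and $\Delta^\star b_{q_i}^\infty \leq 0$ as Radon measures, which together with $b_{q_i}^\infty = -b_{p_i}^\infty$ force $\Delta^\star b_{p_i}^\infty = 0$. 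Each $b_{p_i}^\infty$ is thus a harmonic, $1$-Lipschitz function with unit modulus of gradient on the $\RCD^*(0,N)$-space $X_\infty$.

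\textbf{(Orthogonality and iterated splitting.)} An analogous limit argument applied to the second term of hypothesis (ii) yields $b_{p_i}^\infty + b_{p_j}^\infty = \sqrt{2}\, b_{p_i+p_j}^\infty$ on $X_\infty$. Expanding $|Db_{p_i+p_j}^\infty|^2 = 1$ via bilinearity of $\Gamma$,
\begin{equation*}
1 \;=\; \tfrac{1}{2}\bigl(|Db_{p_i}^\infty|^2 + 2\,\Gamma(b_{p_i}^\infty, b_{p_j}^\infty) + |Db_{p_j}^\infty|^2\bigr) \;=\; 1 + \Gamma(b_{p_i}^\infty, b_{p_j}^\infty),
\end{equation*}
so $\Gamma(b_{p_i}^\infty, b_{p_j}^\infty) = 0$ a.e. Saturation of the dimensional Bochner inequality of \cite{EKS2013, AMS2013} for a harmonic function of unit gradient on an $\RCD^*(0,N)$-space forces $\Hess(b_{p_i}^\infty) = 0$ in the sense of \cite{GigliSplitting}, so $b_{p_i}^\infty$ is the Busemann function of an honest line through each point of $X_\infty$ and its gradient flow is an isometric $\R$-action, precisely as in the proof of Theorem 2.11. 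Applying the splitting theorem iteratively along the $k$ orthogonal harmonic functions $b_{p_1}^\infty,\ldots,b_{p_k}^\infty$ yields $X_\infty \cong \R^k \times Y_\infty$, with $Y_\infty$ either a point ($N-k<1$) or an $\RCD^*(0,N-k)$-space, and with the projection onto $\R^k$ given exactly by $(b_{p_1}^\infty,\ldots,b_{p_k}^\infty)$. For $n$ large the pmGH quasi-isometry then provides a map $(u^n, v^n): B_{\delta_n^{-1}}(\bar{x}_n)\to \R^k \times Y_\infty$ with $u^n_i(x) = \sfd_n(p_i^n, x) - \sfd_n(p_i^n, \bar{x}_n)$ which is a measured GH $\vare$-quasi-isometry onto its image, contradicting the standing assumption and simultaneously producing the explicit quasi-isometry of part (2).

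\textbf{(Main obstacle.)} The principal technical challenge is passing from the approximate $L^2$ bound $\fint|De_{p_i,q_i}|^2 \leq \delta_n$ to the rigid conclusion that each $b_{p_i}^\infty$ generates a genuine line, so that Theorem 2.11 becomes applicable. The identities $\Delta b_{p_i}^\infty = 0$ and $|Db_{p_i}^\infty| \equiv 1$ saturate the dimensional Bochner inequality of \cite{EKS2013}, and extracting the consequence $\Hess(b_{p_i}^\infty) = 0$ and then the isometric nature of the gradient flow in the non-smooth setting requires the full calculus framework of \cite{GigliSplitting,GigliSplittingSur} rather than any direct PDE argument. A secondary subtlety is propagating the mutual orthogonality $\Gamma(b_{p_i}^\infty, b_{p_j}^\infty)=0$ through the iteration: once $X_\infty$ has been split as $\R\times X_\infty^{(1)}$ via $b_{p_1}^\infty$, one must verify that each remaining $b_{p_j}^\infty$ ($j\geq 2$) depends only on the $X_\infty^{(1)}$-coordinate and retains harmonicity, unit gradient and orthogonality on the factor, so that the splitting theorem can be reapplied $k-1$ more times.
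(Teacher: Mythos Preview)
Your overall strategy---contradiction, pmGH compactness, passage to an $\RCD^*(0,N)$ limit, and application of Gigli's splitting machinery to the limit Busemann-type functions---is exactly the paper's route. However, two steps are not justified as written.

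\textbf{(1) The claim $|Db_{p_i}^\infty|=1$ does not follow from what you wrote.} From $\Gamma(b_{p_i}^\infty+b_{q_i}^\infty)=0$ one only gets $b_{q_i}^\infty=-b_{p_i}^\infty$ (using continuity and vanishing at $\bar x_\infty$); this carries no lower bound on $|Db_{p_i}^\infty|$---a constant function satisfies everything you state. The paper's CLAIM~4 proves $|Df^i|\equiv 1$ by a direct geometric construction: in each $X_n$, take a unit-speed geodesic from $y_n$ towards $p_i^n$, let $y_n^a$ be the point at parameter $|a|$, and pass to the limit to obtain $y^a\in X_\infty$ with $\sfd_\infty(y,y^a)=|a|$ and $f^i(y^a)-f^i(y)=a$, whence $|Df^i|(y)\geq 1$. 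The same limit-of-geodesics construction is also needed to get $|Df^{ij}|\equiv 1$, which you invoke without justification when expanding $|Db_{p_i+p_j}^\infty|^2$. (Your alternative route to $b_{q_i}^\infty=-b_{p_i}^\infty$ via an Abresch--Gromoll sup bound presupposes that $\bar x_n$ is close to a $p_i^n q_i^n$-geodesic, which is not in the hypotheses; the gradient route is the correct one.)

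\textbf{(2) You omit the $c$-concavity step that makes Gigli's argument run.} The paper's CLAIM~3 shows that for every $a\in\R$ the function $af^i$ is a Kantorovich potential, with $(af^i)^c=-af^i-a^2/2$, again via the limit-of-geodesics construction. This is not decorative: Gigli's proof does \emph{not} proceed by ``Bochner saturation $\Rightarrow\Hess=0\Rightarrow$ line exists''. There is no line a priori---that is the whole point of replacing the almost-line hypothesis by excess-gradient smallness---so one cannot simply quote Theorem~\ref{thm:splitting}. One must re-run its proof: $c$-concavity (CLAIM~3) is what allows defining the optimal-transport gradient flow $\Phi_t$ of $f^i$; harmonicity (CLAIM~2) gives measure preservation; and $|Df^i|\equiv 1$ (CLAIM~4) is then used to show $\Phi_t^*:W^{1,2}\to W^{1,2}$ is an isometry, whence $\Phi_t$ is an isometric $\R$-action. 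Your ``main obstacle'' paragraph correctly senses that Gigli's framework is needed but misidentifies the mechanism. The same $c$-concavity must be re-established for the restriction $\tilde f^k$ to the factor $X'$ during the induction (paper's CLAIM~D), alongside the independence-of-the-$\R^{k-1}$-coordinate you correctly flag.
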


\begin{proof}
We argue by contradiction. If the thesis is not true then  for any $n \in \N$ there exists an  $\RCD^*(-n^{-2\beta}, N)$-space $(X_n, \sfd_n, \mm_n)$  and points  $\bar{x}_n, \{p^i_n,q^i_n\}_{i=1,\ldots, k}, \{p^i_n+p^j_n\}_{1\leq i< j \leq k}$ of $X_n$ such that $\sfd(p^i_n,\bar{x}_n), \sfd(q^i_n,\bar{x}_n),  \sfd(p^i_n+p^j_n,\bar{x}_n)\geq n^{\beta}$ with
\begin{equation}\label{eq:assn}
\sum_{i=1}^{k} \sup_{B_R(\bar{x})} e_{p_{i},q_{i}} + \sum_{i=1}^k \fint _{B_R(\bar{x}_n)} |D e_{p^i_n,q^i_n}|^2 \, \d \mm_n +\sum_{1\leq i< j\leq k} \fint_{B_R(\bar{x}_n)}  \left|D \left( \frac{\sfd^{p^i_n}+\sfd^{p^j_n}} {\sqrt 2} - \sfd^{p^i_n+p^j_n}\right) \right|^2 \, \d \mm_n   \leq \frac{1}{n}  \quad,   
\end{equation}
for every  $R\in[1,n]$.
To begin with, by p-mGH compactness Proposition \ref{prop:comp}  combined with the $\RCD^*(K,N)$-Stability Theorem \ref{thm:stab} (recall also that the $\RCD^*(0,N)$ condition is equivalent to the $\RCD(0,N)$ condition), we know that there exists an $\RCD^*(0,N)$-space $(X,\sfd,\mm, \bar{x})$ such that, up to subsequences,  we have
 \begin{equation}\label{eq:XntoX}
 \DC\left( (X_n, \sfd_n, \mm_n, \bar{x}_n), (X, \sfd, \mm, \bar{x})\right) \to  0 \quad.
 \end{equation}
Our goal is to prove that $(X, \sfd, \mm, \bar{x})$ is isomorphic to a product $\left(\R^k\times Y, \sfd_{\R^k\times Y}, \mm_{\R^k\times Y}, (0^k,\bar{y})\right)$ for some $\RCD^*(0,N-k)$-space $(Y,\sfd_Y, \mm_Y)$. The strategy is to use the distance functions together with the excess estimates in order to construct in the limit space $X$ a kind of ``affine'' functions which play an analogous role of the Busemann functions in the proof of the splitting theorem.  To this aim call 
 \begin{equation}\label{def:fngn}
 \begin{split}
&f^i_n: B_n(\bar{x}_n)\to \R, \, f^i_n(\cdot):=\sfd_n(p^i_n, \cdot)-\sfd_n(p^i_n, \bar{x}_n) \quad\text{and} \quad g^i_n: B_n(\bar{x}_n)\to \R, \,g^i_n(\cdot):=\sfd_n(q^i_n, \cdot)-\sfd_n(q^i_n, \bar{x}_n) \quad,   \\
&\quad \quad f^{ij}_n: B_n(\bar{x}_n)\to \R, \, f^{ij}_n(\cdot):=\sfd_n(p^i_n+p^j_n, \cdot)-\sfd_n(p^i_n+p^j_n, \bar{x}_n)   \quad     \forall n \in N. 
\end{split}
\end{equation}
Of course $f^i_n,g^i_n, f^{ij}_n$ are 1-Lipschitz so by Arzel\'a-Ascoli Theorem \ref{Prop:AscArz} we have that there exist 1-Lipschitz functions $f^i,g^i,f^{ij}:X\to \R$ such that
\begin{equation}\label{fngntofg}
f^i_n|_{B_R(\bar{x}_n)}\to f^i|_{B_R(\bar{x})}, g^i_n|_{B_R(\bar{x}_n)}\ \to g^i|_{B_R(\bar{x})} , f^{ij}_n |_{B_R(\bar{x})}\to f^{ij}|_{B_R(\bar{x})} \; \text{ uniformly } \forall R>0 \text{ and }  f^i(\bar{x})=g^i(\bar{x})=f^{ij}(\bar{x})=0 
\end{equation}
 as $n \to \infty$,  for every  $i,j=1,\ldots,k$. 
 
 As it will be clear in a moment, the maps $f^i$ will play the role of the Busemann functions  in proving the isometric splitting of X. To this aim  we  now proceed by successive claims about properties  of the functions $f^i,g^i$ which represent the  cornerstones to apply the arguments by Gigli \cite{GigliSplitting}-\cite{GigliSplittingSur} of the Cheeger-Gromoll splitting Theorem.
 \\
 
CLAIM 1: \emph{$f^i=-g^i$ everywhere on $X$ for every $i=1,\ldots, k$.}
\\From the very definition of the excess we have
\begin{eqnarray}
e_{p^i_n,q^i_n}(\cdot)&=&\sfd_n(p^i_n, \cdot)+\sfd_n(q^i_n, \cdot)-\sfd_n(p^i_n,q^i_n)=f^i_n(\cdot)+g^i_n(\cdot)+\sfd_n(p^i_n,\bar{x}_n)+\sfd_n(q^i_n,\bar{x}_n)-\sfd_n(p^i_n,q^i_n)\nonumber \\
&=& f^i_n(\cdot)+g^i_n(\cdot)+e_{p^i_n,q^i_n}(\bar{x}_n) \quad , \label{eq:efg}
\end{eqnarray}
which gives  in particular that
\begin{equation}\label{eq:DeDfg}
|D e_{p^i_n,q^i_n}| \equiv |D(f^i_n+g^i_n)| \quad \text{on } B_n(\bar{x}_n)\quad. 
\end{equation}
Fix now $R>0$ and observe that, since $f^i_n+g^i_n|_{B_R(\bar{x}_n)}\to f^i+g^i|_{B_R(\bar{x})} $ uniformly  we have by the lowersemicontinuity of the slope Proposition \ref{Prop:LSCSlope}
$$
\int_{B_R(\bar{x})} \left|D\left( f^i+g^i   \right) \right|^2 \d\mm \leq  \liminf_n  \int_{B_{R+1}(\bar{x}_n)} \left|D\left( f^i_n+g^j_n \right) \right|^2 \d\mm_n =0 \quad \text{for every fixed } R\geq 1 \quad, $$
thanks to  \eqref{eq:assn}. This gives $|D(f^i+g^i)|=0$ $\mm$-a.e. and therefore the claim follows from  \eqref{fngntofg} since $f^i$ and $g^i$ are continuous.
\\

CLAIM 2: \emph{ $\Delta^\star f^i = 0$ on $X$ as a measure, i.e. $f^i$ is harmonic, for every $i=1,\ldots,k$.}
\\Fixed any $R>0$, let  $\varphi:X \to [0,1]$ be a $1/R$-Lipschitz cut-off function with $\varphi\equiv 1$ on $B_R(\bar{x})$ and $\varphi\equiv 0$ outside $B_{2R}(\bar{x})$.  From the technical point of view it is convenient here to see the convergence \eqref{eq:XntoX} realized by isometric immersions $\iota_n,\iota$ of the spaces $X_n,X$  into an ambient Polish space $(Z,\sfd_Z)$ as in Definition \ref{def:conv} and define $\varphi:Z \to [0,1]$ be a $1/R$-Lipschitz cut-off function with $\varphi\equiv 1$ on $B_R(\iota(\bar{x}))$  and $\varphi\equiv 0$ outside $B_{2R}(\iota(\bar{x}))$ so that $\varphi$ can be used as  $1/R$-Lipschitz cut-off function also for the spaces $X_n$; but let us not complicate the notation with the isometric inclusions here. 
\\
We first claim that 
\begin{equation}\label{eq:intXGammafivarphi}
\int_X \Gamma(f^i,\varphi)\, \d\mm= \lim_n \int_{X_n} \Gamma(f^i_n,\varphi)\, \d\mm_n .
\end{equation}
To this aim we make use of the estimates on the heat flow approximation of the distance function proved in Subsection \ref{SS:estHeatReg}. Let $\psi_{n}:X_{n}\to [0,1]$ be   cut off functions satisfying 
$$\psi_{n}\equiv 1 \; \text{on $B_{n^{\beta}/4}(\bar{x}_{n})$, \; } \;   \psi\equiv 0 \text{ on $X\setminus B_{n^{\beta}/2}(\bar{x}_{n})$  and } n^{2\beta}|\Delta \psi^r|+n^{\beta} |D \psi^r| \leq C(K,N), $$
and let $\tilde{f}^{i}_{n}:=\H_{1}(\psi_{n} f^{i}_{n})$. From Lemma \ref{lem:EstHtReg} combined with \eqref{eq:assn}, we know that for all fixed $R>0$ and $n\geq \max\big((16R)^{1/\beta}, R\big)$ it holds
\begin{align}
&\int_{B_{2R}(\bar{x}_{n})} |D(\tilde{f}^{i}_{n}-{f}^{i}_{n})|^{2} \, \d\mm \leq C(K,N,R) \, \left[ \sup_{x \in B_{2 R}(\bar{x}_{n})} e_{p^i_n,q^i_n}(x) +  \frac{1}{n^{\beta}} \right] \leq C(K,N,R) \left(\frac{1}{n}+\frac{1}{n^{\beta}}\right)  . \label{eq:Gammatildeff} \\
&\int_{B_{2R}(\bar{x}_{n})}  |\Delta \tilde{f}^{i}_{n}|^{2} \, \d \mm  \leq C(K,N,R). \nonumber
\end{align}
From \cite[Corollary 5.5]{AH-N} (see also \cite[Corollary 6.10]{GMS2013}), the above estimates ensure that  $\tilde{f}^{i}_{n}|_{B_{2R}(\bar{x}_{n})}$ converge strongly in $W^{1,2}$ sense to $f^{i}|_{B_{2R}(\bar{x})}$, and then by using  again \eqref{eq:Gammatildeff} the claimed  \eqref{eq:intXGammafivarphi} follows.
\\Now that \eqref{eq:intXGammafivarphi} is proved,   via integration by parts we get
\begin{equation}\nonumber
-\int_X \Gamma(f^i,\varphi)\, \d\mm=- \lim_n \int_{X_n} \Gamma(f^i_n,\varphi)\, \d\mm_n= \lim_n \int_{X_n}  \varphi \, \d\Delta^\star f^i_n \leq \lim_{n} \frac{C(N)}  {n^{\beta}-2R-1} \mm_n(B_{2R+1}(\bar{x}_n))=0,
\end{equation}
where in the inequality we used the Laplacian comparison Theorem \ref{thm:LapComp} to infer that $\Delta^\star f^i_n \leq  \frac{C(N)}{n^{\beta}-2R-1} \mm_n$ on $B_{2R+1}(\bar{x}_n)$ for $n$ large enough.
\\It follows (see for instance \cite[Proposition 4.14]{Gigli12} ) that $f^i$ admits a measure-valued Laplacian on $X$ satisfying $\Delta^\star f^i \leq 0$ on $X$. On the other hand,  by completely analogous arguments we also get    $\Delta^\star g^i \leq 0$ on $X$ as a measure. The combination of these last two facts with CLAIM 1 gives CLAIM 2.
\\

CLAIM 3:\emph{ for every $a\in \R$ the function $af^i$ is a Kantorovich potential, $i=1,\ldots,k$. More precisely we show that $af^i$ is $c$-concave and satisfies }
\begin{equation}\label{eq:afc}
(af^i)^c = -a f^i- \frac{a^2}{2} \quad \text{and} \quad (-af^i)^c=a f^i -\frac{a^2}{2}.
\end{equation}
Though our situation is a bit different, our proof of this claim is inspired by the ideas of \cite{GigliSplitting}.  To simplify the notation let us drop the index $i$ in the arguments below. Since by construction  $f$ is $1$-Lipschitz, then for every $a \in \R$ the function $af$ is $|a|$-Lipschitz and 
$$a f(x)- a f(y) \leq |a| \sfd (x,y) \leq \frac{a^2}{2}+\frac{\sfd^2(x,y)}{2} \quad \forall x,y \in X \quad, $$
which gives $\frac{\sfd^2(x,y)}{2}-a f(x) \geq - a f(y)-\frac{a^2}{2}$ for every $x,y \in X$. Therefore, by the very definition of $c$-transform, we get
\begin{equation}\label{eq:afc>}
(af)^c(y):=\inf_{x\in X} \left(\frac{\sfd^2(x,y)}{2}-a f(x)\right) \geq - a f(y)-\frac{a^2}{2} \quad \forall y \in X \quad.
\end{equation}
To prove the converse inequality fix $y \in X$ and consider first the case $a\leq0$. For $n$ large enough let $y_n\in X_n$ be the point corresponding to $y$ via a GH-quasi isometry,  let $\gamma^{y_n,p_n}:[0, \sfd_n(y_n,p_n)]\to X_n$ be a unit speed minimizing geodesic from $y_n$ to $p_n$ and let $y^a_n:=\gamma^{y_n,p_n}_{|a|}$. In this way we have
\begin{equation}\label{eq:dnqna}
\sfd_n(p_n,y^a_n)=\sfd_n(p_n,y_n)+a \quad.
\end{equation}
From \eqref{eq:XntoX} and  since the space $(X,\sfd)$ is proper, we have that there exists  $y^a\in X$, a limit point of  $y^a_n$, such that 
\begin{equation}\label{eq:dyya}
\sfd(y,y^a)=|a|
\end{equation}
and, by using \eqref{fngntofg} and \eqref{eq:dnqna}, we  obtain 
\begin{equation} \label{eq:fyafy}
f(y^a)-f(y)= \lim_{n} \left[ f_n(y^a_n)-f_n(y_n) \right] =  \lim_{n} \left[ \sfd_n(p_n,y^a_n)-\sfd_n(p_n,y_n) \right] =a   \quad.
\end{equation}
By choosing $y^a$ as a competitor in the definition of $(af)^c$, thanks to  \eqref{eq:dyya} and  \eqref{eq:fyafy}, we infer
\begin{equation}\label{eq:afcpf}
(af)^c(y):= \inf_{x\in X} \left(\frac{\sfd^2(x,y)}{2}-a f(x)\right) \leq \frac{\sfd^2(y^a,y)}{2}-a f(y^a)=\frac{a^2}{2}- a^2- a f(y) = -\frac{a^2}{2}-a f(y) 
\end{equation}
as desired. To handle the case $a>0$ repeat the same arguments for $g^i,g^i_n$ in place of $f^i,f^i_n$: by considering this time $y^a_n:=\gamma^{y_n,q_n}_{a}$, where $\gamma^{y_n,q_n}:[0, \sfd_n(y_n,q_n)]\to X_n$ is a unit speed minimizing geodesic from $y_n$ to $q_n$, and passing to the limit as $n\to +\infty$ we get a point $y^a$ such that
$$f(y^a)-f(y)=-\left[g(y^a)-g(y)\right]=-  \lim_{n} \left[ g_n(y^a_n)-g_n(y_n) \right] = - \lim_{n} \left[ \sfd_n(q_n,y^a_n)-\sfd_n(q_n,y_n) \right] =a. $$
At this stage one can repeat verbatim \eqref{eq:afcpf} to conclude the proof of the first identity of  \eqref{eq:afc}; the second one follows by choosing $-a$ in place of $a$. The $c$-concavity of $af$ is now a direct consequence of \eqref{eq:afc}, indeed 
$$(af)^{cc}= \left(-a f- \frac{a^2}{2}  \right)^c= (-af)^c+ \frac{a^2}{2}=af \quad. $$

CLAIM 4: \emph{$|Df^i|\equiv 1$ everywhere on $X$, for every $i=1,\ldots,k$.}
\\Again, for sake of simplicity of notation, we  drop  the index $i$ for the proof. We already know that $|Df|\leq 1$ everywhere on $X$; to show the converse recall that by \eqref{eq:dyya}-\eqref{eq:fyafy} for every $a<0$ and $y \in X$ there exists $y^a \in X$ such that $\sfd(y,y^a)=|a|$ and $f(y^a)-f(y)=a$. Therefore it follows that
$$|Df|(y)=\limsup_{z\to y} \frac{|f(z)-f(y)|}{\sfd(z,y)}\geq \lim_{a\uparrow 0} \frac{|f(y^a)-f(y)|}{\sfd(y^a,y)}=1\quad,$$
as desired.
\\

CLAIM 5: \emph{$\Gamma(f^i,f^j)=0$ $\mm$-a.e. on $X$.}
\\First of all observe that it is enough to prove 
\begin{equation}\label{eq:claim5pf}
\Big|D\Big( \frac{f^i+f^j}{\sqrt{2}}-f^{ij}\Big) \Big|=0 \quad \mm\text{-a.e. on } X \quad.
\end{equation} 
Indeed, since $|Df^i|, |Df^{ij}|\equiv 1$ on $X$ (the proof for $f^{ij}$ can be performed along the same lines of  CLAIM 4), by polarization we get
\begin{eqnarray}
\left|\Gamma\Big(f^i,f^j \Big) \right| &=& \left|\,  \left|D \left( \frac{f^i+f^j }{\sqrt 2}\right)\right|^2 -1\,  \right|= \left| \, \left|D \left( \frac{f^i+f^j }{\sqrt 2}\right)\right|^2-  \left| D f^{ij}\right|^2 \right| \nonumber \\
&=&\left| \, \left|D  \left( \frac{f^i+f^j }{\sqrt 2}\right) \right|+  \left| D f^{ij}\right| \right| \; \cdot \; \left|\,   \left|D  \left( \frac{f^i+f^j }{\sqrt 2}\right) \right|-  \left| D f^{ij}\right| \right| \nonumber \\
&\leq&   10  \,  \, \left|D \left( \frac{f^i+f^j }{\sqrt 2}- f^{ij}\right) \right| \quad \mm\text{-a.e. on } X \quad . \nonumber
\end{eqnarray}
So let us establish \eqref{eq:claim5pf}. Observe that since $\left( \frac{f^i_n+f^j_n}{\sqrt{2}}-f^{ij}_n \right) \Big|_{B_R(\bar{x}_n)} \to  \left( \frac{f^i+f^j}{\sqrt{2}}-f^{ij}\right) \Big|_{B_R(\bar{x})}$ uniformly, and since they are  uniformly Lipschitz,  we have the lowersemicontinuity of the slope Proposition \ref{Prop:LSCSlope} which yields
$$
\int_{B_R(\bar{x})} \left|D\left( \frac{f^i+f^j}{\sqrt{2}}-f^{ij}\right) \right|^2 \d\mm \leq  \liminf_n  \int_{B_{R+1}(\bar{x}_n)} \left|D\left( \frac{f^i_n+f^j_n}{\sqrt{2}}-f^{ij}_n\right) \right|^2 \d\mm_n =0 \quad \text{for every fixed } R\geq 1 \quad, $$
thanks to  \eqref{eq:assn}, as desired.
\\

Using CLAIMS 2-3-4 we will argue by combining the ideas of Cheeger-Gromoll and Gigli \cite{GigliSplitting} with an induction argument.  Indeed, CLAIMS 2-3-4 are precisely the ingredients required to applying the arguments of  \cite{GigliSplitting} to obtain the following:

\begin{lemma}\label{l:splitting}
Each mapping $f^i:X\to \R$ is a splitting map.  That is, there exists a $\RCD^*(0,N-1)$ space $Y^i$ and an isomorphism $X\to Y^i\times\R$ such that $f^i(y,t)=t$.  If $N<2$ then $Y^i$ is exactly a point.
\end{lemma}
\begin{proof}
Since, with CLAIMS 2-3-4 in hand, the proof of the above is verbatim as in \cite{GigliSplitting},  we will not go through the details except to mention the main points.  Namely, CLAIM 3 first allows us to define the optimal transport gradient flow $\Phi_t:X\to X$ of $f^i$.  By CLAIM 2 this flow preserves the measure.  If $X$ were a smooth manifold, one would then use CLAIM 4 as by Cheeger-Gromoll to argue that $|\nabla^2 f^i|=0$, which would immediately imply that the flow map was a splitting map as claimed.  In the general case one argues  as in \cite{GigliSplitting} to use CLAIM 2 and CLAIM 4 to show the induced map $\Phi^*_t:W^{1,2}(X)\to W^{1,2}(X)$ is an isomorphism of Hilbert spaces, which forces $f^i$ to be the claimed splitting map.
\end{proof}

To finish the proof of the almost splitting theorem via excess we need to see that each $f^i$ induces a distinct splitting.  That is, we want to know that the mapping $f=(f^1,\ldots,f^k):X\to \R^k$ is a splitting map.  We will proceed by induction on $k$:\\

CONCLUSION \emph{of the proof of the almost splitting via excess. } 
\\If $k=1$ then the proof is complete from Lemma \ref{l:splitting}.  Now let us consider $k\geq 2$ and let us assume the mapping $(f^1,\ldots,f^{k-1}):X\to \R^{k-1}$ is a splitting map.  That is, there exists a $\RCD^*(0,N-k+1)$ space $X'$ such that $X=X'\times \R^{k-1}$ and such that under this isometry we have  $(f^1,\ldots,f^{k-1})(x',t_1,\ldots,t_{k-1}) = (t_1,\ldots,t_{k-1})$.  We will show the mapping $(f^1,\ldots,f^{k}):X\to \R^{k}$ is a splitting map.

To this aim let us consider the function
$$
\tilde{f}^k:=f^k\circ \iota=f^k((\cdot, 0)): X'\to \R\, ,
$$
where $\iota:X'\to X$ is the inclusion map of $X'$ into $X$ as the $0$-slice.
\\

CLAIM A: \emph{$f^k(x',t_1, \ldots, t_{k-1})=f^k(x',s_1,\ldots, s_{k-1}) \;$  for every $(s_1,\ldots, s_{k-1}),(t_1,\ldots,t_{k-1}) \in \R^{k-1}$ and every $x'\in X'$}.\\
Let $\Phi_t$ be the flow map induced by $f^{k-1}$. By following verbatim the proof of \cite[Proposition 2.17]{GigliSplittingSur} (or equivalently the proof of \cite[Corollary 3.24]{GigliSplitting}), which is based on the trick ``Horizontal-vertical derivative''  introduced in \cite{AGS11b},  we have that for every $g \in L^1\cap L^\infty(X,\mm)$ with bounded support it holds
$$
\lim_{t\downarrow 0} \int_X \frac{f^k \circ \Phi_t- f^k}{t} \, g \, \d \mm= - \int_{X} \Gamma(f^{k-1},f^k) \, g \, \d \mm =0  \quad,
$$
where in  the last equality we used Claim 5 above. Observing that for $\mm'\times \LL^{k-2}$-a.e. $(x', s')\in X'\times \R^{k-2}$ the map $t\mapsto f^k\circ \Phi_{t}((x',s',0))$ is 1-Lipschitz from $\R$ to $\R$, so in particular ${\mathcal L}^1$-a.e. differentiable, by the  Dominated Convergence Theorem  we infer that
\begin{eqnarray}
\int_{X} \frac{d}{dt} (f^k \circ \Phi_t )\, g \, \d \mm&=& \lim_{h\downarrow 0}\int_{X} \frac{f^k \circ \Phi_{t+h}- f^k  \circ \Phi_{t}}{h} \, g \, \d \mm \, \nonumber \\
&=&\lim_{h\downarrow 0} \int_X \frac{f^k \circ \Phi_h- f^k}{h} \, g\circ(\Phi_t)^{-1} \, \d \mm=0  \quad .\nonumber
\end{eqnarray}
It follows that for  $\mm'\times \LL^{k-2}$-a.e. $(x', s')\in X'\times \R^{k-2}$ we have $\frac{d}{dt} \Big(f^k\circ \Phi_t\Big) ((x', s',0))=0$ for $\LL^1$-a.e. $t \in \R$ and therefore, since again the function $t\mapsto f^k \circ \Phi_t$ is $1$-Lipschitz (so in particular absolutely continuous), 
\begin{equation}\label{eq:ficonst}
f^k(x',s',s)-f^k(x',s',0)=\int_{0}^{s}  \frac{d}{dt} \Big(f^k\circ \Phi_t((x',s',0))\Big) \, \d t=0, \quad \text{for } \mm'\times \LL^{k-2}\text{-a.e. } (x', s')\in X'\times \R^{k-2}, \, \forall s \in \R.   
\end{equation}
But, since $f^k:X \to \R$ is $1$-Lipschitz, the  identity \eqref{eq:ficonst} holds for every $(x',s')\in X'\times \R^{k-2}$. In other words the function $f^k(x',t_1,\ldots,t_{k-1})$ does not depend on the variable $t_{k-1}$. Repeating verbatim the argument above with $f^{i}$, $i=1,\ldots, k-2$, in place of $f^{k-1}$ gives CLAIM A.
\\

CLAIM B: \emph{ $|D\tilde{f}^k|_{X'}=1$} $\mm'$-a.e. on $X'$.
\\This claim is an easy consequence of a nontrivial result proved in \cite{AGS11b} (see  \cite[Theorem 3.13]{GigliSplittingSur} for the statement we refer to) stating that the Sobolev space of the product $X'\times \R$  splits isometrically into  the product of the corresponding Sobolev spaces.  More precisely, we already know that for $\mm'$-a.e. $x'$ the  function $t\mapsto f^k(x',t)$ from $\R^{k-1}$ to $\R$ is 1-Lipschitz (and then in particular locally of Sobolev class) as well as the function $x'\to f^k(x',t)$, the result then states the following orthogonal splitting
\begin{equation}\label{eq:SobolevSplit}
|Df^k|^2_{X'\times \R^{k-1}} (x',t)=|D f^k(\cdot, t)|^2_{X'} (x')+ |D f^k(x',\cdot)|^2_{\R^{k-1}} (t)  \quad \text{for $\mm'\times {\mathcal L}^{k-1}$-a.e. $(x',t)\in X'\times \R^{k-1}$} \quad.
\end{equation}
CLAIM B follows then by the combination of  CLAIM 4, CLAIM A and \eqref{eq:SobolevSplit}.
\\

CLAIM C: \emph{$\Delta^\star_{X'} \tilde{f}^k =0$ as a measure on $X'$}.
\\For every Lipschitz function $\tilde{\varphi}\in X'$ with bounded support, called $\varphi(x',t)=\tilde{\varphi}(x')$, thanks to CLAIM 2, using CLAIM A and that  (by polarization of identity \eqref{eq:SobolevSplit})
$$\Gamma_X(f^k,\varphi)((x',t))=\Gamma_{X'} (f^k(\cdot,t), \varphi(\cdot,t))(x')+ \Gamma_{\R^{k-1}}(f^k(x',\cdot), \varphi(x',\cdot))(t) \quad \text{for $\mm'\times {\mathcal L}^{k-1}$-a.e. $(x',t)\in X'\times \R^{k-1}$,}$$
 we have that
\begin{eqnarray}
0&=&\int_X  \varphi \, \d(\Delta^\star f^k)=- \int_X \Gamma_X(f^k,\varphi)\, \d \mm = -\int_{X'\times \R^{k-1}} \left[ \Gamma_{X'}(\tilde{f}^k,\tilde{\varphi})+\Gamma_{\R^{k-1}}(f^k,\varphi) \right] \, \d (\mm'\times{\mathcal L}^{k-1}) \nonumber \\
&=&- \int_{X'\times \R^{k-1}} \Gamma_{X'}(\tilde{f}^k,\tilde{\varphi})  \, \d (\mm'\times{\mathcal L}^{k-1})\quad. \nonumber
\end{eqnarray}
Therefore  $ \int_{X'} \Gamma_{X'}(\tilde{f}^k,\tilde{\varphi})  \, \d \mm'=0 $ and CLAIM C follows.
\\

CLAIM D: \emph{for every $a\in \R$ the function $a\tilde{f}^k$ is  a Kantorovich potential in $X'$. More precisely we show that $a\tilde{f}^k$ is $c$-concave and satisfies} 
\begin{equation}\label{eq:aftildec}
(a\tilde{f}^k)^c = -a \tilde{f}^k- \frac{a^2}{2} \quad \text{and} \quad (-a\tilde{f}^k)^c=a \tilde{f}^k -\frac{a^2}{2} \quad\text{everywhere on }X' \, .
\end{equation}
First of all observe that by CLAIM A we have 
$$\inf_{(y',t)\in X'\times \R^{k-1}} \frac{\sfd_{X'}^2(x',y')+|t|^2}{2}\mp a f^k((y',t))= \inf_{y'\in X'} \frac{\sfd_{X'}^2(x',y')}{2} \mp a f^k((y',0)) \quad \forall x' \in X'. $$
\\By the definition of $c$-transform, using  the  above identity,  recalling CLAIM 3 and  that we identified $X$ with $X'\times \R^{k-1}$, we have
\begin{eqnarray}
(\pm a \tilde{f}^k)^c(x')&=&\inf_{y' \in X'} \frac{\sfd^2_{X'}(x',y')}{2}\mp a \tilde{f}^k(y') =  \inf_{(y',t)\in X'\times \R^{k-1}} \frac{\sfd_{X'}^2(x',y')+|t|^2}{2}\mp a f^k((y',t)) \nonumber \\
&=& \inf_{y \in X} \frac{\sfd^2_X((x',0),y)}{2}\mp a f^k(y)= (\pm a f^k)^c((x',0))=\mp a f^k((x',0))-\frac{a^2}{2}\nonumber\\
&=&\mp a \tilde{f}^k(x')-\frac{a^2}{2} \nonumber 
\end{eqnarray}
which gives  \eqref{eq:aftildec}. The $c$-concavity then easily follows:
$$(a \tilde{f}^k)^{cc}=\left(-\frac{a^2}{2}- a \tilde{f}^k\right)^c=\frac{a^2}{2}+(- a \tilde{f}^k)^c=\frac{a^2}{2}- \frac{a^2}{2}+ a \tilde{f}^k= a \tilde{f}^k\quad.$$

Now we can apply Lemma \ref{l:splitting} to $\tilde f^k$, which completes the induction step and hence the proof.
\end{proof}

\section{Proof of the main results} \label{Sec:MainRes}

\subsection{Different stratifications coincide $\mm$-a.e.}\label{SS:Strat}

In this subsection we will analyze the following a priori different stratifications of the $\RCD^*(K,N)$-space $(X,\sfd,\mm)$; after having proved that they are made of $\mm$-measurable subsets we will show that different stratifications coincide $\mm$-a.e. .  We start by the definition. For every $k \in \N$, $1\leq k\leq N$ consider
\begin{eqnarray}
A_k&:=&\{x \in X\; : \; \R^k \in \Tan(X,x) \text{ but } \nexists  (Y,\sfd_Y,\mm_Y, \bar{y}) \text{ with } \diam(Y)>0 \text{ s.t. }  \R^k \times Y  \in \Tan(X,x) \}   \label{eq:defAk} \\
A_k'&:=&\{x \in X\; : \; \R^k \in \Tan(X,x) \text{ but  no } (\tilde{X}, \sfd_{\tilde{X}}, \mm_{\tilde{X}}, \tilde{x}) \in  \Tan(X,x) \text{ splits  } \R^{k+1}     \}   \label{eq:defAk'} \\
A_k''&:=&\{x \in X\; : \; \R^k \in \Tan(X,x) \text{ but   } \R^{k+j} \notin  \Tan(X,x) \text{ for every } j\geq 1     \}  \quad ,  \label{eq:defAk''} 
\end{eqnarray} 
where we wrote (and will sometimes write later on when there is no ambiguity on the meaning) $\Tan(X,x)$ in place of $\Tan(X,\sfd,\mm,x)$,   $\R^k$ in place of $(\R^k, \sfd_E, \LL_k,0 )$ and $\R^k \times Y$ in place of $(\R^k\times Y,\sfd_E\times \sfd_Y,\LL_k \times \mm_Y, (0,\bar{y}))$ in order to keep the notation short. It is clear from the definitions that 
\begin{equation}\label{eq:AkIncl}
A_k\subset A_k'\subset A_k''\quad,
\end{equation}
moreover, from Theorem \ref{thm:euclideantangents} we already know that 
\begin{equation}\label{eq:Ak''All}
\mm\left(X\setminus \bigcup_{1\leq k \leq N } A_k''\right)=0.
\end{equation}  
As preliminary step, in the next lemma we establish the measurability of $A_k, A_k', A_k''$. Let us point out that a similar construction was performed in \cite{GMR2013}.

\begin{lemma}[Measurability of the stratification]\label{lem:AkMeas}
For every $k\in \N, 1\leq k\leq N$, the sets $A_k, A_k', A_k''$ can be written as difference of couples of analytic sets so they are $\mm$-measurable.
\end{lemma}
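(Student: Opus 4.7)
The plan proceeds in two steps. First, I show that for any ``target'' family $\mathcal{F} \subset \MM_{C(\cdot)}$ which is Borel or analytic, the set of points $x$ for which $\Tan(X,\sfd,\mm,x) \cap \mathcal{F} \neq \emptyset$ is analytic in $X$. Second, I identify $A_k$, $A_k'$ and $A_k''$ as differences of such sets, which is enough since by Lusin's theorem analytic subsets of Polish spaces are universally measurable, and hence so are their complements and all such differences.

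For the first step the key object is the evaluation map
\[
\Phi : X \times (0,1) \longrightarrow \big(\MM_{C(\cdot)}, \DC\big), \qquad \Phi(x,r) := \big[X,\, r^{-1}\sfd,\, \mm^{x}_{r},\, x\big],
\]
with $\mm^{x}_{r}$ as in \eqref{eq:normalization}. Joint continuity of $\Phi$ follows from the $\sfd$-continuity in $x$ at fixed $r$ noted at the end of the subsection on measured tangents, combined with the obvious $\DC$-continuity in $r$ of the rescaling $\sfd \mapsto r^{-1}\sfd$. The ``tangent graph''
\[
G := \big\{(x,\mathcal{Y}) \in X \times \MM_{C(\cdot)} : \mathcal{Y} \in \Tan(X,\sfd,\mm,x)\big\}
\]
is then Borel: by the very definition of $\Tan$,
\[
G \;=\; \bigcap_{n,m \in \N}\;\bigcup_{r \in (0,1/n)\cap \mathbb{Q}}\big\{(x,\mathcal{Y}) : \DC(\Phi(x,r),\mathcal{Y}) < 1/m\big\},
\]
each innermost set being open by joint continuity of $\Phi$ and of $\DC$. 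Hence $G$ is $G_\delta$, and for any Borel or analytic $\mathcal{F}\subset \MM_{C(\cdot)}$ the set
\[
T(\mathcal{F}) := \pi_X\big(G \cap (X\times\mathcal{F})\big) \;=\; \{x \in X : \Tan(X,\sfd,\mm,x) \cap \mathcal{F} \neq \emptyset\}
\]
is analytic in $X$, being the projection to $X$ of an analytic set; when $\mathcal{F}$ is closed (e.g.\ a singleton), the same $G_\delta$ expression shows that $T(\mathcal{F})$ is even Borel.

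For the second step I identify the relevant target subsets of $\MM_{C(\cdot)}$. Let $\mathcal{T}_j := \{[\R^j,\sfd_E,\LL_j,0]\}$, a singleton (hence closed), and introduce the continuous ``product with $\R^j$'' map $\Psi_j : \MM_{C(\cdot)} \to \MM_{C(\cdot)}$ sending $[Y,\sfd_Y,\mm_Y,\bar{y}]$ to $[\R^j \times Y,\, \sfd_E \times \sfd_Y,\, \LL_j \times \mm_Y,\, (0,\bar{y})]$ suitably renormalized. Set
\[
\mathcal{S}_{j} := \Psi_{j}\big(\MM_{C(\cdot)}\big), \qquad \tilde{\mathcal{S}}_k := \Psi_{k}\big(\{[Y] \in \MM_{C(\cdot)} : \diam Y > 0\}\big),
\]
both analytic as continuous images of Polish (respectively open) subsets of $\MM_{C(\cdot)}$; for $\tilde{\mathcal{S}}_k$ one uses that $\diam$ is lower semicontinuous on $\MM_{C(\cdot)}$ with respect to pmGH convergence, so $\{\diam > 0\}$ is open. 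With this notation the defining properties \eqref{eq:defAk}--\eqref{eq:defAk''} translate into
\[
A_k = T(\mathcal{T}_k) \setminus T(\tilde{\mathcal{S}}_k),\quad A_k' = T(\mathcal{T}_k) \setminus T(\mathcal{S}_{k+1}),\quad A_k'' = T(\mathcal{T}_k) \setminus \bigcup_{j\geq 1} T(\mathcal{T}_{k+j}),
\]
each exhibited as a difference of two analytic sets as required.

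The main obstacle is the careful verification of the auxiliary topological facts -- joint continuity of $\Phi$, continuity of $\Psi_j$, and lower semicontinuity of $\diam$ on $\MM_{C(\cdot)}$ -- all standard but used silently above. Once granted, the $\mm$-measurability of $A_k$, $A_k'$, $A_k''$ follows from the universal measurability of analytic sets. A closely related construction was carried out in \cite{GMR2013}; the present approach is parallel but keeps track of the additional splitting factors.
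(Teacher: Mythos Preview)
Your proof is correct and follows essentially the same strategy as the paper's: both establish that the tangent graph $G$ (the paper's $\mathcal{A}$) is Borel in $X \times \MM_{C(\cdot)}$ and then express each stratum as a difference of projections onto $X$ of Borel subsets, hence as a difference of analytic sets. Your packaging via the operator $T(\mathcal{F})$ and the product map $\Psi_j$ is slightly more systematic than the paper's hands-on construction of the sets $\mathcal{B}$ and $\mathcal{C}$, but the underlying argument is the same.
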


\begin{proof}
We prove the statement for $A_k$, the argument for the others being analogous.
Define ${\mathcal A}\subset X \times \MM_{C(\cdot)}$ by
\begin{equation}\label{eq:defcalA}
{\mathcal A}:= \{(x,(Y,\sfd_Y,\mm_Y, y)) \; : \; (Y,\sfd_Y,\mm_Y, y) \in \Tan(X,x)\}. 
\end{equation}
Recall that for every $r\in \R$, the map $X\ni x \mapsto (X, \sfd_r, \mm^x_r, x) \in (\MM_{C(\cdot)}, \mathcal D_{C(\cdot)})$ is continuous, so the set 
$$\bigcup_{x \in X} \left[ \{x\}\times B_{\frac{1}{i}} \big( (X, \sfd_r, \mm^x_r, x) \big) \right] \subset X\times  \MM_{C(\cdot)} \quad \text{is open for every } i\in \N$$
and therefore
$${\mathcal A}= \bigcap_{i \in \N} \bigcap_{j \in \N}  \bigcup_{r<\frac{1}{j}} \bigcup_{x \in X}   \left[ \{x\}\times B_{\frac{1}{i}} \big( (X, \sfd_r, \mm^x_r, x) \big) \right] \quad \text{is Borel}. $$
Now let ${\mathcal B}\subset X \times \MM_{C(\cdot)}$ be defined by 
$${\mathcal B}:=\bigcup_{x \in X} \{x\}\times (\R^k,\sfd_E,\LL_k,0^k) =\bigcap_{i \in \N}  \bigcup_{x \in X} \left[ \{x\}\times  B_{\frac{1}{i}} \big((\R^k,\sfd_E,\LL_k,0^k)\big)  \right].$$
Clearly also ${\mathcal B}$ is Borel, as well as the below defined set ${\mathcal C}\subset X \times \MM_{C(\cdot)}$ 
$${\mathcal C}:=\bigcup_{\{Y\in  \MM_{C(\cdot)}, \diam(Y)>0 \}} \bigcup_{x \in X} \left[ \{x\}\times (\R^k \times Y)  \right] =\bigcap_{j \in \N} \bigcup_{i \in \N} \bigcup_{\{Y\in  \MM_{C(\cdot)}, \diam(Y)\geq \frac{1}{i} \}} \bigcup_{x \in X} \left[ \{x\}\times B_{\frac{1}{j}}
\big( (\R^k \times Y)  \big) \right] .$$
Calling $\Pi_1: X\times \MM_{C(\cdot)}\to X$ the projection on the first factor we have that $\Pi_1({\mathcal A}\cap {\mathcal B} \cap {\mathcal C})$ is analytic as well as $\Pi_1({\mathcal A}\cap {\mathcal B})$, as projections of Borel subsets. But 
\begin{eqnarray}
&\Pi_1({\mathcal A}\cap {\mathcal B} \cap {\mathcal C})=\{x \in X\;:\; \R^k \in \Tan(X,x) \text{ and } \exists Y\in \MM_{C(\cdot)} \text{ with } \diam(Y)>0 \text{ s.t. } \R^k \times Y \in \Tan(X,x) \}  \nonumber \\
&\Pi_1({\mathcal A}\cap {\mathcal B})=\{x \in X\;:\; \R^k \in \Tan(X,x) \}  \quad, \nonumber
\end{eqnarray}
so that $A_k=\Pi_1({\mathcal A}\cap {\mathcal B})\setminus \Pi_1({\mathcal A}\cap {\mathcal B} \cap {\mathcal C})$ is a difference of analytic sets and therefore is measurable with respect to any Borel measure; in particular $A_k$ is $\mm$-measurable.
\end{proof}

In the next lemma we prove that the a priori  different stratifications $A_k, A_k',A_k''$ essentially coincide.
\begin{lemma}[Essential equivalence of the different stratifications]\label{lem:AkEq}
Let $(X,\sfd,\mm)$ be an $\RCD^*(K,N)$-space and recall the definition of $A_k, A_k', A_k''$ in \eqref{eq:defAk}, \eqref{eq:defAk'}, \eqref{eq:defAk''} respectively.  Then
$$\mm(A_k''\setminus A_k)=0 \quad \text{ for every } 1\leq k \leq N\quad, $$
so in particular, thanks to \eqref{eq:AkIncl}, we have that $A_k=A_k'=A_k''$ up to sets of $\mm$-measure zero and 
$$\mm\left(X\setminus \bigcup_{1\leq k \leq N} A_k\right)=0\quad.$$
\end{lemma}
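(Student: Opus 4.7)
The inclusions $A_k\subset A_k'\subset A_k''$ are immediate from the definitions: any splitting of $\R^{k+1}$ provides a factor $\R^k\times Y$ with $Y=\R\times(\cdots)$ having $\diam(Y)>0$, and any $\R^{k+j}$ with $j\geq 1$ splits $\R^{k+1}$. Consequently the whole lemma reduces to showing $\mm(A_k''\setminus A_k)=0$, since the final ``in particular'' will then follow from \eqref{eq:Ak''All}.

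The plan is to combine Theorem \ref{thm:iteratedtangents} (``tangents of tangents are tangents'') with Theorem \ref{thm:euclideantangents}. Let $N_0\subset X$ be the $\mm$-negligible set outside of which both Theorem \ref{thm:iteratedtangents} holds and $(X,\sfd,\mm)$ admits at least one Euclidean tangent, as given by Theorem \ref{thm:euclideantangents}. I will argue that $A_k''\setminus N_0\subset A_k$, which is enough. Pick any $x\in A_k''\setminus N_0$ and suppose towards a contradiction that $x\notin A_k$. Then there exists a p.m.m.s.\ $(Y,\sfd_Y,\mm_Y,\bar y)$ with $\diam(Y)>0$ such that
\[
(\R^k\times Y,\sfd_E\times\sfd_Y,\LL_k\times\mm_Y,(0^k,\bar y))\in\Tan(X,\sfd,\mm,x).
\]
By \eqref{eq:tancd} this product is $\RCD^*(0,N)$, and by iterating the Splitting Theorem \ref{thm:splitting} on the $k$ Euclidean factors one concludes that either $k\geq N$, in which case $Y$ must be a singleton (contradicting $\diam(Y)>0$ and ruling out this range outright), or $k\leq N-1$ and $Y$ is itself an $\RCD^*(0,N-k)$-space with $N-k\geq 1$.

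The next step applies Theorem \ref{thm:euclideantangents} to $Y$: since $\mm_Y$ is a nontrivial measure on $Y$, there exists $y'\in\supp(\mm_Y)$ and an integer $1\leq j\leq N-k$ with $(\R^j,\sfd_E,\LL_j,0^j)\in\Tan(Y,\sfd_Y,\mm_Y,y')$. I would now verify the compatibility lemma ``tangents of products are products of tangents'', namely
\[
(\R^k,\sfd_E,\LL_k,0^k)\times\Tan(Y,\sfd_Y,\mm_Y^{y'}_1,y')\subset \Tan(\R^k\times Y,\sfd_E\times\sfd_Y,(\LL_k\times\mm_Y)^{(0^k,y')}_1,(0^k,y')),
\]
which follows directly from Definition \ref{def:conv} by rescaling each factor and using that the rescaled Euclidean factor is isomorphic to itself (together with a routine check that the product normalization of \eqref{eq:normalization} is compatible with the individual normalizations up to harmless constants absorbed by the standard argument). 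Applying this lemma to the chosen $y'$ gives $(\R^{k+j},\sfd_E,\LL_{k+j},0^{k+j})\in\Tan(\R^k\times Y,(0^k,y'))$. Theorem \ref{thm:iteratedtangents}, valid at $x$ since $x\notin N_0$, then yields $(\R^{k+j},\sfd_E,\LL_{k+j},0^{k+j})\in\Tan(X,\sfd,\mm,x)$, contradicting $x\in A_k''$ because $j\geq 1$.

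The main obstacle I expect is the product/tangent compatibility verification: one must carefully track how the normalization constant in \eqref{eq:normalization} transforms under taking products in order to get a genuine p-mGH convergence of p.m.m.s., rather than merely up to a renormalization. This is a purely bookkeeping issue, resolved by noting that rescaling and renormalization of the product measure $\LL_k\times\mm_Y$ at $(0^k,y')$ differ from the product of the individual rescaled-and-renormalized measures only by a positive scalar, which does not affect p-mGH limits in the compact class $\MM_{C(\cdot)}$ up to passing to a subsequence. Everything else is a direct assembly of Theorems \ref{thm:stab}, \ref{thm:splitting}, \ref{thm:euclideantangents} and \ref{thm:iteratedtangents}.
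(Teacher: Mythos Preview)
Your argument is correct and follows essentially the same architecture as the paper's proof: both argue by contradiction, pick $\bar x\in A_k''\setminus A_k$ where Theorem~\ref{thm:iteratedtangents} applies, take a tangent $\R^k\times Y$ with $\diam(Y)>0$, produce a higher Euclidean factor inside a tangent of this product at a suitable point, and then push it back into $\Tan(X,\bar x)$ via iterated tangents to contradict $\bar x\in A_k''$.

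The only substantive difference is \emph{how} the extra Euclidean factor is produced in the tangent of $Y$. The paper picks the midpoint $\gamma(1/2)$ of a geodesic in $Y$, so that any blow-up there contains a line and hence, by the Splitting Theorem~\ref{thm:splitting}, splits off an $\R$; it then iterates this step until the complementary factor is a singleton. You instead invoke Theorem~\ref{thm:euclideantangents} directly on the $\RCD^*(0,N-k)$-space $Y$ to obtain some $\R^j$ as a tangent at a point $y'$, reaching the contradiction in one shot without iteration. Your route is marginally cleaner; the paper's is slightly more self-contained since it avoids re-applying Theorem~\ref{thm:euclideantangents} to an auxiliary space. Both proofs use (and neither fully spells out) the compatibility ``tangents of a product $\R^k\times Y$ at $(0^k,y')$ contain $\R^k\times\Tan(Y,y')$'', which you correctly flag as the only point needing care; the paper tacitly uses the same fact when passing from a blow-up of $Y$ to a blow-up of $\R^k\times Y$. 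One minor imprecision: your dichotomy ``$k\geq N$ versus $k\leq N-1$'' should read ``$N-k<1$ versus $N-k\geq 1$'', since $N$ need not be an integer.
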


\begin{proof}
First recall that thanks to Theorem \ref{thm:iteratedtangents}, for $\mm$-a.e. $x \in X$, for every $(\tilde{X},\sfd_{\tilde{X}}, \tilde{\mm},\tilde{x})\in \Tan(X,\sfd,\mm,x)$ and for every $\tilde{x}'\in \tilde{X}$ we have 
\begin{equation}\label{eq:itertang}
\Tan(\tilde{X},\sfd_{\tilde{X}}, \tilde{\mm}_1^{\tilde{x}'},\tilde{x}') \subset  \Tan(X,\sfd,\mm,x).
\end{equation}
Fix $1\leq k \leq N$; we argue by contradiction by assuming that $\mm(A_k''\setminus A_k)>0$.  It follows that   there exists a point $\bar{x}\in A_k''\setminus A_k$ where the above property \eqref{eq:itertang} holds. By definition, if $\bar{x}\in A_k''\setminus A_k$ then $\R^k \in \Tan(X,\sfd,\mm,\bar{x})$ and there exists a p.m.m.s. $(Y,\sfd_Y, \mm_Y,y)$ with $\diam(Y)>0$ such that $\tilde{X}:= \R^k \times Y \in \Tan(X,\sfd,\mm,x)$. Notice that, since every element in $\Tan(X,\sfd,\mm,x)$ is an $\RCD^*(0,N)$-space, by applying $k$ times the Splitting Theorem \ref{thm:splitting} to $\tilde{X}$ we get that $Y$ is an $\RCD^*(0,N-k)$ space, in particular $Y$ is a geodesic space. Since by assumption $\diam(Y)>0$ then $Y$ contains at least two points and a geodesic $\gamma:[0,1]\to Y$ joining them. It follows that any blow up of $(Y, \sfd_Y, \mm_Y)$ at $\gamma(1/2)$ is an $\RCD^*(0,N-k)$ space containing a line and so it splits off an $\R$ factor by the Splitting Theorem \ref{thm:splitting}. Therefore there exists an $\RCD^*(0,N-k-1)$ space $(\tilde{Y},\sfd_{\tilde{Y}}, \mm_{\tilde{Y}},\tilde{y})$ such that 
$$\R^{k+1}\times \tilde{Y} \in \Tan\left(\tilde{X},\sfd_{\tilde{X}}, \tilde{\mm}_1^{(0,\gamma(1/2))}, \big(0,\gamma(1/2)\big) \right)$$
and, thanks to \eqref{eq:itertang} and our choice of $\bar{x}$, this yields
$$\R^{k+1}\times \tilde{Y}  \in \Tan(X,\sfd,\mm,\bar{x}) \quad .$$
If $\tilde{Y}$ is a singleton we have finished, indeed in this case we would have proved that $\R^{k+1} \cong \R^{k+1}\times \{\tilde{y}\} \in  \Tan(X,\sfd,\mm,\bar{x})$, contradicting the assumption that $\bar{x}\in A_k''$.
\\If instead $\tilde{Y}$  contains at least two points  then it contains a geodesic joining them and we can repeat the arguments above to show that $\R^{k+2}\times \tilde{{\tilde{Y}}}  \in \Tan(X,\sfd,\mm,\bar{x})$ for some $\RCD^*(0,N-k-2)$ space   $\tilde{{\tilde{Y}}}$.  Recalling that an $\RCD^*(0,\tilde{N})$ space for $0\leq \tilde{N} < 1$ is a singleton, after a finite number of iterations of the above procedure we get that $\R^{k+j}\in \Tan(X,\sfd,\mm,\bar{x})$ for some $1\leq j \leq N-k$, contradicting that $\bar{x}\in A_k''$.
\end{proof}

In order to establish the rectifiability, the following easy lemma will also be useful.  

\begin{lemma}\label{lem:delta}
i) For every $x \in A_k$ and for every $\vare>0$ there exists $\delta=\delta(x,\vare)>0$ such that for every $0<r<\delta$ the following holds: If for some p.m.m.s. $(Y,\sfd_Y, \mm_Y, y)\in \MM_{C(.)}$ one has
\begin{equation}\label{eq:SmallDY}
\mathcal D_{C(\cdot)} \left((X,\sfd_r, \mm^x_r, x), \big(Y\times \R^k, \sfd_{Y}\times \sfd_E, \mm_Y \times \LL_k, (y,0^k) \big) \right) \leq \delta \quad \text{then} \quad  \diam{Y}\leq \vare.
\end{equation}

ii) Define the function $\bar{\delta}(\cdot, \vare):A_k \to \R^+$ by
\begin{equation} \label{eq:defbardelta}
\bar{\delta}(x, \vare):= \sup \{ \delta(x, \vare) \text{ such that  \eqref{eq:SmallDY} holds} \} \quad .
\end{equation}
Then, for every fixed $\vare_1, \delta_1>0$, the set of points 
\begin{equation}\label{eq:d1meas}
\{x \in A_k: \bar{\delta}(x, \vare_1)\geq \delta_1\} \subset X \text{ is the complementary  in $A_k$ of an analytic subset of $X$ and therefore $\mm$-measurable.}
\end{equation}
\end{lemma}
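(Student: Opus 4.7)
The plan for part (i) is a compactness/contradiction argument in $(\MM_{C(\cdot)},\mathcal D_{C(\cdot)})$. Suppose (i) fails: then there exist $x\in A_k$, $\vare_0>0$, and sequences $r_n\downarrow 0$, $\delta_n\downarrow 0$, $(Y_n,\sfd_{Y_n},\mm_{Y_n},y_n)\in \MM_{C(\cdot)}$ with $\mathcal D_{C(\cdot)}((X,\sfd_{r_n},\mm^x_{r_n},x),Y_n\times\R^k)\leq\delta_n$ and $\diam Y_n>\vare_0$. By Proposition \ref{prop:comp} together with Theorem \ref{thm:stab}, up to subsequence $(X,\sfd_{r_n},\mm^x_{r_n},x)\to(\tilde X,\tilde x)\in \Tan(X,\sfd,\mm,x)$ with $\tilde X$ an $\RCD^*(0,N)$-space, and $(Y_n,y_n)\to(\tilde Y,\tilde y)$ in p-mGH. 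Since p-mGH convergence is preserved under products with $\R^k$ (take the embeddings realising the convergence and form the product with the identity on $\R^k$), one has $Y_n\times\R^k\to\tilde Y\times\R^k$, and by $\delta_n\to 0$ we identify $\tilde X\cong \tilde Y\times\R^k$. Iterating the Splitting Theorem \ref{thm:splitting} further identifies $\tilde Y$ with an $\RCD^*(0,N-k)$-space, or a singleton if $N-k<1$; in particular $\tilde Y$ is a geodesic space.

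The crucial remaining step is to rule out $\tilde Y=\{\tilde y\}$. From $\diam Y_n>\vare_0$ and the triangle inequality, pick $a_n\in\supp(\mm_{Y_n})$ with $\sfd_{Y_n}(a_n,y_n)\geq \vare_0/2$. The plan is to show that $\sfd_{Y_n}(a_n,y_n)$ remains bounded along a subsequence: the uniform doubling inherent to $\MM_{C(\cdot)}$, combined with the normalisation $\mm_{Y_n}(B_1(y_n))\geq 1$ and the p-mGH closeness of $Y_n\times\R^k$ to the finite-dimensional tangent $\tilde X$, forces the ``witness mass'' around $a_n$ to contribute to bounded balls of $\tilde X$ and therefore to sit at bounded distance from $y_n$. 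Extracting once more, $a_n\to \tilde a\in\supp(\tilde \mm_Y)$ with $\sfd_{\tilde Y}(\tilde a,\tilde y)\geq \vare_0/2$, whence $\tilde Y\times \R^k\in \Tan(X,\sfd,\mm,x)$ with $\diam \tilde Y>0$, contradicting the defining property of $A_k$.

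For part (ii) the argument is descriptive set-theoretic. The set of valid $\delta$ in the definition of $\bar\delta(x,\vare_1)$ is downward closed, so $\bar\delta(x,\vare_1)<\delta_1$ is equivalent to the existence of $0<r<\delta_1$ and $(Y,\sfd_Y,\mm_Y,y)\in \MM_{C(\cdot)}$ with $\mathcal D_{C(\cdot)}((X,\sfd_r,\mm^x_r,x),Y\times\R^k)<\delta_1$ and $\diam Y>\vare_1$. Using the continuity of $(x,r)\mapsto (X,\sfd_r,\mm^x_r,x)$ recalled right after \eqref{eq:normalization}, the continuity of the metric $\mathcal D_{C(\cdot)}$, and the lower semicontinuity of the diameter functional on $\MM_{C(\cdot)}$ (hence its Borel measurability, by a standard support-tracing argument using the uniform doubling of the class), the subset
\[ \Sigma:=\{(x,r,Y)\in X\times(0,\delta_1)\times \MM_{C(\cdot)} : \mathcal D_{C(\cdot)}((X,\sfd_r,\mm^x_r,x),Y\times\R^k)<\delta_1,\ \diam Y>\vare_1\} \]
is Borel in the Polish product space. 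Its projection $\Pi_X(\Sigma)\subset X$ is then analytic, and one has the identity $\{x\in A_k:\bar\delta(x,\vare_1)\geq \delta_1\}=A_k\setminus \Pi_X(\Sigma)$, exhibiting the desired set as the complement in $A_k$ of an analytic subset of $X$. The $\mm$-measurability follows from Lemma \ref{lem:AkMeas} together with universal measurability of analytic subsets of Polish spaces.

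The main obstacle is the step in (i) that controls the distance $\sfd_{Y_n}(a_n,y_n)$: since diameter is not upper semicontinuous under p-mGH convergence of arbitrary p.m.m.s.\ (mass can in principle escape to infinity), the argument really requires the uniform doubling built into $\MM_{C(\cdot)}$ together with the compatibility of $Y_n\times\R^k$ with the $\RCD^*(0,N)$-tangent $\tilde X$ to prevent escape. All the other pieces (compactness in $\MM_{C(\cdot)}$, stability and iterated splitting of $\RCD^*$-spaces, continuity of products, and the Borel/analytic measurability manipulations in (ii)) are by now standard.
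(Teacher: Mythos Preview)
Your strategy for (i) is the same contradiction-plus-compactness argument as the paper's, and you are right to single out the passage from $\diam Y_n>\vare_0$ to $\diam \tilde Y>0$ as the delicate step: the paper simply asserts ``It follows that $\diam Y>0$'' without justification. However, your proposed resolution via doubling and mass comparison with the tangent $\tilde X$ does not work. Take $Y_n=\{0,n\}$ with measure $\delta_0+\delta_n$ and base point $0$; these are normalized, doubling with constant $2$, hence in $\MM_{C(\cdot)}$, and satisfy $\diam Y_n=n\to\infty$, yet $Y_n\times\R^k\to\R^k$ in p-mGH because the component $\{n\}\times\R^k$ sits at distance $n$ from the base point and is invisible on any fixed ball. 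If $x\in A_k$ has $\R^k$ as a tangent, then for suitable $r_n\downarrow 0$ one has $\mathcal D_{C(\cdot)}\big((X,\sfd_{r_n},\mm^x_{r_n},x),\,Y_n\times\R^k\big)\to 0$ while $\diam Y_n\to\infty$. The ``witness mass'' near $a_n=n$ never contributes to bounded balls of $\tilde X$ precisely because pointed convergence is local; so your boundedness argument for $\sfd_{Y_n}(a_n,y_n)$ fails, and indeed the lemma read literally for arbitrary $Y\in\MM_{C(\cdot)}$ is false.

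The rescue is that in the only place the lemma is invoked (the proof of Theorem~\ref{thm:rectUked}) the factor $Y$ arises from the Almost Splitting Theorem~\ref{thm:AlmSplit} and is either a point or an $\RCD^*(0,N-k)$-space, hence \emph{geodesic}. For geodesic $Y_n$ with $\diam Y_n>\vare_0$ there is a point $a_n\in Y_n$ with $\sfd_{Y_n}(a_n,y_n)>\vare_0/2$, and on a geodesic from $y_n$ to $a_n$ one finds $c_n$ at distance \emph{exactly} $\vare_0/2$ from $y_n$; this bounded sequence passes, via GH-convergence of the balls $B_1(y_n)$, to a point $\tilde c\in\tilde Y$ with $\sfd_{\tilde Y}(\tilde c,\tilde y)=\vare_0/2$, giving $\diam\tilde Y>0$ and the desired contradiction. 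This is the missing observation in both your argument and the paper's; with it, the rest of your proof of (i) goes through. Your treatment of (ii) is essentially identical to the paper's.
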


\begin{proof}
i) If by contradiction \eqref{eq:SmallDY} does not hold then there exist $\vare>0$, a sequence $r_j\downarrow 0$ and a sequence of p.m.m.s. $(Y_j,\sfd_{Y_j}, \mm_{Y_j}, y_j)\in \MM_{C(.)}$ s.t. 
$$\mathcal D_{C(\cdot)} \left((X,\sfd_{r_j}, \mm^x_{r_j},x), \big(Y_j\times \R^k, \sfd_{Y_j}\times \sfd_E, \mm_{Y_j} \times \LL_k, (y_j,0^k) \big) \right) \to 0  \quad \text{and} \quad  \diam{Y_j}\geq \vare. $$
But by p-mGH compactness Proposition \ref{prop:comp} there exists a p.m.m.s.  $(Y,\sfd_{Y}, \mm_{Y}, y)\in \MM_{C(.)}$ such that, up to subsequences,  $Y_j\to Y$ in p-mGH sense. It follows that $\diam Y>0$ and $Y\times \R^k \in \Tan(X,\sfd,\mm,x)$ contradicting that $x \in A_k$.
\\

ii) The construction is analogous to the one performed in the proof of Lemma \ref{lem:AkMeas}, let us sketch it briefly. Clearly, for every $\delta_1, \vare_1>0$, the following subset ${\mathcal D}_{\delta_1, \vare_1}\subset X\times \MM_{C(.)}$
$${\mathcal D}_{\delta_1, \vare_1}:=\bigcap_{i \in \N} \bigcup_{0<r<\delta_1} \bigcup_{x \in X} \{x\}\times \left[B_{1/i}\left((X,\sfd_r, \mm^x_r, x)\right)  \cap \left( \bigcup_{Y\in \MM_{C(.)}:\diam(Y)> \vare_1} B_{\delta_1} (Y\times \R^k) \right)  \right] \quad \text{ is Borel }. $$
Therefore $\Pi_1({\mathcal D}_{\delta_1, \vare_1})\subset X$ is analytic and, since $\{x \in A_k:\bar{\delta}(x, \vare_1)\geq \delta_1\}=A_k\setminus \Pi_1({\mathcal D}_{\delta_1, \vare_1})$, the thesis follows.
\end{proof}

\subsection{Rectifiability of $\RCD^*(K,N)$-spaces}\label{SS:Rect}
The goal of this section is to prove that given an $\RCD^*(K,N)$-space $(X,\sfd,\mm)$, every $A_k$ defined in \eqref{eq:defAk} is $k$-rectifiable, the rectifiability Theorem \ref{thm:rect} will then  follow by Lemma \ref{lem:AkEq}.

To this aim, fix $\bar{x}\in A_k$. By Theorem \ref{lem:ConstrU}, for every $\vare_2>0$ there exists a rescaling $(X,\tilde{\sfd}, \tilde{\mm}, \bar{x}):=(X,r^{-1} \sfd, \mm^{\bar{x}}_r, \bar{x})$ for some $0<r<<1$, with the following property: 
 there exist $R>10$ and points $\{p_i,q_i\}_{i=1,\ldots, k}\subset \partial B^{\tilde{\sfd}}_{R}(\bar{x}), \{p_i+q_j\}_{1\leq i <j \leq k} \in B^{\tilde{\sfd}}_{2R}(\bar{x})\setminus B^{\tilde{\sfd}}_R(\bar{x})$ such that   
\begin{equation} \label{eq:B10e2}
\sum_{i=1}^k  \sup_{B^{\tilde{\sfd}}_{10}(\bar{x})}    e_{p_i,q_i} +   \fint _{B^{\tilde{\sfd}}_{10}(\bar{x})} \left( \sum_{i=1}^k  |D e_{p_i,q_i}|^2 + \sum_{1\leq i< j\leq k}   \left|D \left( \frac{\tilde{\sfd}^{p_i}+\tilde{\sfd}^{p_j}} {\sqrt 2} - \tilde{\sfd}^{p_i+p_j}\right) \right|^2 \right) \, \d \tilde{\mm}   \leq \varepsilon_2\quad.
\end{equation}
Consider the maximal function $M^k:B^{\tilde{\sfd}}_9(\bar{x})\to \R^+$ defined by 
\begin{equation}\label{eq:defMk}
M^k(x):= \sup_{0<r<1}  \fint _{B^{\tilde{\sfd}}_{r}(x)} \left( \sum_{i=1}^k  |D e_{p_i,q_i}|^2 + \sum_{1\leq i< j\leq k}   \left|D \left( \frac{\tilde{\sfd}^{p_i}+\tilde{\sfd}^{p_j}} {\sqrt 2} - \tilde{\sfd}^{p_i+p_j}\right) \right|^2 \right) \, \d \tilde{\mm}.
\end{equation}

\begin{lemma} \label{lem:Mk}
For every rescaling $(X,\tilde d,\tilde\mm,\bar x)$ and $\vare_1>0$ the subset  $\{x\in B_9^{\tilde{\sfd}}(\bar{x}) : M^k(x)>\vare_1\}$ is Borel. Moreover for every $\vare_1>0$ there exists $\vare_2>0$ such that if the rescaling $(X,\tilde \sfd,\tilde\mm,\bar x)$ satisfies \eqref{eq:B10e2} then 
\begin{equation} \label{eq:Mke1}
\tilde \mm(\{x \in  B_9^{\tilde{\sfd}}(\bar{x}) : M^k(x)>\vare_1\})\leq \vare_1 \quad.
\end{equation}
\end{lemma}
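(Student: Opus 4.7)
The plan is to recognize the statement as the classical \emph{weak-type $(1,1)$ maximal function estimate}, transported to the doubling metric-measure setting $(X,\tilde\sfd,\tilde\mm)$. Introduce
\[
g:=\sum_{i=1}^k|De_{p_i,q_i}|^2+\sum_{1\le i<j\le k}\Bigl|D\Bigl(\tfrac{\tilde\sfd^{p_i}+\tilde\sfd^{p_j}}{\sqrt 2}-\tilde\sfd^{p_i+p_j}\Bigr)\Bigr|^2,
\]
so that $g\ge 0$ is Borel, $M^k(x)=\sup_{0<r<1}\fint_{B_r^{\tilde\sfd}(x)}g\,\d\tilde\mm$, and the assumption \eqref{eq:B10e2} says $\int_{B_{10}^{\tilde\sfd}(\bar x)}g\,\d\tilde\mm\le\vare_2\,\tilde\mm(B_{10}^{\tilde\sfd}(\bar x))$.

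For the Borel measurability of $\{M^k>\vare_1\}$, I would fix $r\in(0,1)$ and observe that $x\mapsto\int_{B_r^{\tilde\sfd}(x)}g\,\d\tilde\mm$ is Borel via Fubini applied to the lower-semicontinuous kernel $\mathbf 1_{\tilde\sfd(x,y)<r}$, while $x\mapsto\tilde\mm(B_r^{\tilde\sfd}(x))$ is Borel and strictly positive (doubling forces $\supp\tilde\mm=X$). Consequently $\fint_{B_r^{\tilde\sfd}(\cdot)}g\,\d\tilde\mm$ is Borel in $x$, and the countable supremum over rational $r\in(0,1)$ is Borel. That this rational supremum agrees pointwise with $M^k$ follows because $r\mapsto\int_{B_r^{\tilde\sfd}(x)}g\,\d\tilde\mm$ and $r\mapsto\tilde\mm(B_r^{\tilde\sfd}(x))$ are both left-continuous (open balls, monotone convergence), so $\fint_{B_r^{\tilde\sfd}(x)}g\,\d\tilde\mm$ is left-continuous in $r$ and hence approximated from below along rational sequences.

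For the quantitative bound, the main tool is the $5r$-Vitali covering lemma, which applies because $(X,\tilde\sfd,\tilde\mm)$ is $\RCD^*(r^2K,N)$ and therefore obeys \eqref{eq:mmdoubl} with a doubling constant $C=C(K,N)$ uniform on scales $\le 10$. Set $E:=\{M^k>\vare_1\}\cap B_9^{\tilde\sfd}(\bar x)$; for each $x\in E$ select $r_x\in(0,1)$ with $\int_{B_{r_x}^{\tilde\sfd}(x)}g\,\d\tilde\mm>\vare_1\,\tilde\mm(B_{r_x}^{\tilde\sfd}(x))$, noting $B_{r_x}^{\tilde\sfd}(x)\subset B_{10}^{\tilde\sfd}(\bar x)$ since $r_x<1$. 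The Vitali lemma extracts a countable disjoint subfamily $\{B_{r_i}^{\tilde\sfd}(x_i)\}$ whose $5$-dilates cover $E$, giving
\[
\tilde\mm(E)\le\sum_i\tilde\mm\bigl(B_{5r_i}^{\tilde\sfd}(x_i)\bigr)\le C(K,N)\sum_i\tilde\mm\bigl(B_{r_i}^{\tilde\sfd}(x_i)\bigr)\le\frac{C(K,N)}{\vare_1}\int_{B_{10}^{\tilde\sfd}(\bar x)}g\,\d\tilde\mm\le\frac{C(K,N)\,\vare_2}{\vare_1}\,\tilde\mm\bigl(B_{10}^{\tilde\sfd}(\bar x)\bigr).
\]

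Finally, the normalization \eqref{eq:normalization} forces $\tilde\mm(B_{1/2}^{\tilde\sfd}(\bar x))\le 2$, so by iterating the doubling bound one gets $\tilde\mm(B_{10}^{\tilde\sfd}(\bar x))\le C'(K,N)$. Choosing $\vare_2:=\vare_1^2/(C(K,N)\,C'(K,N))$ then gives $\tilde\mm(E)\le\vare_1$, as required. No genuine obstacle is expected: this is the textbook Hardy--Littlewood argument, the only care being that every constant is controlled by $K,N$ (through the $\RCD^*$-doubling bound on scales $\le 10$) rather than by the small scale $r$, and that Borel measurability is handled through left-continuity and reduction to rational radii.
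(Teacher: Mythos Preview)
Your proof is correct and follows the same strategy as the paper: both recognize \eqref{eq:Mke1} as the weak-type $(1,1)$ maximal inequality on a doubling space, with the doubling constant controlled by $K,N$ via the $\RCD^*$ condition. The paper simply cites the $L^1\to\text{weak-}L^1$ boundedness of the maximal operator (Heinonen's book), whereas you unfold the standard $5r$-Vitali covering argument and also track the normalization of $\tilde\mm$ explicitly; for Borel measurability the paper asserts lower semicontinuity of $M^k$ directly, while your rational-radii/left-continuity reduction is a slightly more cautious route to the same conclusion.
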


\begin{proof}
The first claim is trivial since $M^k$ is Borel (it is lower semicontinuous as supremum of  continuous functions). For the second claim, observe that  $(X,\tilde{\sfd}, \tilde{\mm})$ is $\RCD^*(\min(K,0),N)$ thanks to property \eqref{eq:cdinv} so $B_{10}^{\tilde{\sfd}}(\bar{x})$ is doubling with constant depending just on $K$ and $N$. But then \eqref{eq:Mke1} follows by the continuity of the maximal function operator from $L^1$ to weak-$L^{1}$holding  in doubling spaces (for the proof see for instance \cite[Theorem 2.2]{Hein}), which gives $\mm(\{M^k>t\}) \leq \frac{C(K,N)}{t} \vare_2 $.
\end{proof}

Now for a fixed rescaling $(X,\tilde \sfd,\tilde\mm,\bar x)$ and any $\vare_1,\delta_1>0$, let us define the sets
\begin{eqnarray}
U^k_{\vare_1}(\bar x,r) = U^k_{\vare_1}&:=&\{x \in B_9^{\tilde{\sfd}}(\bar{x}) \cap A_k \text{ such that } M^k(x)\leq \vare_1 \} \label{eq:defUke1}  \quad,\\
U^k_{\vare_1, \delta_1}(\bar x,r)=U^k_{\vare_1, \delta_1}&:=&\{x \in B_9^{\tilde{\sfd}}(\bar{x}) \cap A_k \text{ such that } M^k(x)\leq \vare_1 \text{ and } \bar{\delta}(x, \vare_1)\geq \delta_1 \} \quad, \label{eq:defUke1}
\end{eqnarray}
where the map $x \to \bar{\delta}(x, \vare_1)$ was defined in \eqref{eq:defbardelta}. Thanks to (i) of Lemma \ref{lem:delta} we know that $\bar{\delta}(x,\vare_1)>0$ for every $x \in A_k$ and $\vare_1>0$, so 
\begin{equation}\label{eq:Ukbigcup}
U^k_{\vare_1}(\bar x,r) = U^k_{\vare_1}=\bigcup_{j \in \N} U^k_{\vare_1, \frac{1}{j}}\quad . 
\end{equation}
Therefore, to establish the rectifiability of $U^k_{\vare_1}$,  it is enough to  prove that $U^k_{\vare_1, \frac{1}{j}}$ is rectifiable for every $\vare_1>0$. This is our next claim, which is the heart of the proof of the rectifiability of $\RCD^*(K,N)$-spaces. The idea is to ``bootstrap'' to smaller and smaller scales  the excess estimates initially given by Theorem \ref{lem:ConstrU} by using the smallness of the  Maximal function and then to convert these excess estimates into estimates  on the Gromov-Hausdorff distance with $\R^k$ thanks to the Almost Splitting Theorem \ref{thm:AlmSplit} combined with  Lemma \ref{lem:delta}. The conclusion will follow by observing that GH closeness at arbitrary small scales via the same map implies biLipschitz equivalence. 

 \begin{theorem}[Rectifiabilty of  $U^k_{\vare_1, \delta_1}$, of $U^k_{\vare_1}$ and measure estimate] \label{thm:rectUked}
For every $\vare_3>0$ there exist $\delta_1,\vare_1>0$ such that if $(X,\tilde \sfd,\tilde\mm,\bar x)$ is a rescaling satisfying \eqref{eq:B10e2}, where $\epsilon_2>0$ is from Lemma \ref{lem:Mk}, then for every ball $B^{\tilde{\sfd}}_{\delta_1}\subset B_9^{\tilde{\sfd}}(\bar{x})$ of radius $\delta_1$ we have
\begin{equation}\label{eq:rectUked}
B^{\tilde{\sfd}}_{\delta_1}\cap  U^k_{\vare_1, \delta_1} \text{ is $1+\vare_3$-biLipschitz equivalent to a measurable subset of $\R^k$}\quad.
\end{equation}
It follows that $U^k_{\vare_1, \delta_1}$ and, thanks to \eqref{eq:Ukbigcup}, $U^k_{\vare_1}$ are $k$-rectifiable via $1+\vare_3$-biLipschitz maps as well. Moreover 
\begin{equation}\label{eq:measest}
\tilde{\mm}\left((B_9^{\tilde{\sfd}}(\bar{x}) \cap A_k) \setminus U^{k}_{\vare_1}\right) \leq \vare_1.
\end{equation}
\end{theorem}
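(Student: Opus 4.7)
The natural candidate chart is the distance-function map
$\vec\phi(x):=(\tilde\sfd(p_1,x)-\tilde\sfd(p_1,\bar x),\ldots,\tilde\sfd(p_k,x)-\tilde\sfd(p_k,\bar x))$,
and the plan is to show that for $y,z\in B^{\tilde\sfd}_{\delta_1}\cap U^k_{\vare_1,\delta_1}$ one has
$|\vec\phi(z)-\vec\phi(y)|_{\R^k}=(1\pm\vare_3)\tilde\sfd(y,z)$.
Setting $s:=\tilde\sfd(y,z)<2\delta_1$, consider the rescaling $(X,\tilde\sfd/s,\tilde\mm^y_s,y)$.
By \eqref{eq:cdinv} its curvature bound equals $(rs)^2K$, so it is $\RCD^*(-\delta^{2\beta},N)$
for $\delta_1$ small enough. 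The slope of the excess $e_{p_i,q_i}$ and of the ``orthogonality
defect'' $(\tilde\sfd^{p_i}+\tilde\sfd^{p_j})/\sqrt 2-\tilde\sfd^{p_i+p_j}$ are scale-invariant
(the functional scales as $s$, the slope divides by $s$), and the measure-normalised
averages $\fint$ are invariant under rescaling of the measure. Hence for every
$R'\in[1,\delta^{-1}]$ the Almost-Splitting integral hypothesis on
$B^{\tilde\sfd/s}_{R'}(y)$ in the new structure reduces to the same integral on
$B^{\tilde\sfd}_{sR'}(y)$ in the old structure, which is at most $M^k(y)\le\vare_1$
as soon as $s\delta^{-1}<1$. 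The distance hypothesis $(R-9)/s\ge\delta^{-\beta}$ is
ensured by choosing $R$ large in Theorem \ref{lem:ConstrU} and $\delta_1$ small.

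With $\vare_1\le\delta(N,\vare)$ from Theorem \ref{thm:AlmSplit}, the almost-splitting
theorem produces a p.m.m.s.\ $(Y',\sfd_{Y'},\mm_{Y'},y')$ and an $\vare$-measured-GH
quasi-isometry $(u,v):B^{\tilde\sfd/s}_{\delta^{-1}}(y)\to\R^k\times Y'$ with
$u^i(x)=(\tilde\sfd(p_i,x)-\tilde\sfd(p_i,y))/s=(\vec\phi(x)-\vec\phi(y))^i/s$.
The second use of $y\in U^k_{\vare_1,\delta_1}$, namely $\bar\delta(y,\vare_1)\ge\delta_1$,
combined with $\vare<\delta_1$ and Lemma \ref{lem:delta}(i) applied at scale $r=s<\delta_1$,
forces $\diam Y'\le\vare_1$. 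Thus the $Y'$-factor collapses and the projected map
$(\vec\phi-\vec\phi(y))/s$ is a $(\vare+\vare_1)$-GH quasi-isometry onto
$B_{\delta^{-1}}(0^k)$. Evaluated at $z$, which lies at $(\tilde\sfd/s)$-distance
$1\le\delta^{-1}$ from $y$, this yields
$\bigl||\vec\phi(z)-\vec\phi(y)|_{\R^k}-\tilde\sfd(y,z)\bigr|\le(\vare+\vare_1)\tilde\sfd(y,z)$,
i.e.\ the $(1+\vare_3)$-bilipschitz estimate \eqref{eq:rectUked} once $\vare+\vare_1\le\vare_3$.
Rectifiability of $U^k_{\vare_1,\delta_1}$ and of $U^k_{\vare_1}$ then follows by
covering $B^{\tilde\sfd}_9(\bar x)$ with countably many balls of radius $\delta_1$
and using \eqref{eq:Ukbigcup}; the measure estimate \eqref{eq:measest} is immediate
from Lemma \ref{lem:Mk} via the inclusion
$(B^{\tilde\sfd}_9(\bar x)\cap A_k)\setminus U^k_{\vare_1}\subseteq\{M^k>\vare_1\}$.

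The main obstacle is the bookkeeping of the many smallness parameters. Given $\vare_3$,
one first fixes the almost-splitting error $\vare<\vare_3/2$, then
$\vare_1\le\min(\vare_3/2,\delta(N,\vare))$ (so the excess hypothesis of
Theorem \ref{thm:AlmSplit} is fed by the maximal function bound), and finally $\delta_1$
so small that simultaneously $\delta_1<\delta$ (so $M^k(y)\le\vare_1$ controls balls of
radius $s\delta^{-1}<1$ for every $R'\in[1,\delta^{-1}]$), $\vare<\delta_1$ (so that
$\bar\delta(y,\vare_1)\ge\delta_1$ collapses $Y'$), $(rs)^2|K|\le\delta^{2\beta}$ for
all $s<2\delta_1$, and $(R-9)/s\ge\delta^{-\beta}$. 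The conceptual heart is the
scale-invariance of the \emph{slope} of the excess, as opposed to the excess itself,
which is precisely what converts the single maximal-function bound $M^k(y)\le\vare_1$
into an excess-gradient bound on every ball around $y$, at every scale, exactly as
needed to feed Theorem \ref{thm:AlmSplit} at each scale $s$.
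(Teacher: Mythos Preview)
Your approach is essentially the same as the paper's: the chart is the distance map $u^i(\cdot)=\tilde{\sfd}(p_i,\cdot)-\tilde{\sfd}(p_i,\bar x)$, the Almost Splitting Theorem \ref{thm:AlmSplit} is applied at each scale $s=\tilde{\sfd}(y,z)$ using the maximal-function bound $M^k(y)\le\vare_1$, and the transverse factor $Y'$ is collapsed via Lemma \ref{lem:delta} and the condition $\bar\delta(y,\vare_1)\ge\delta_1$. The paper compresses these steps into one sentence (``Combining the Almost Splitting Theorem with Lemma \ref{lem:delta} and the very definition of $U^k_{\vare_1,\delta_1}$\ldots'') while you spell out the scale-invariance of the slope and the parameter chain explicitly; your bookkeeping is in fact more detailed than the paper's, though note that the compatibility $\vare<\delta_1<\delta(N,\vare)/2$ tacitly presumes $2\vare<\delta(N,\vare)$, a point neither proof addresses carefully (and which is harmless once one decouples the ball radius from the $\bar\delta$-threshold).
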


\begin{proof}
First notice that, thanks to  Lemma \ref{lem:Mk}, Lemma \ref{lem:delta} and Lemma \ref{lem:AkMeas}, the subset  $U^k_{\vare_1, \delta_1}\subset X$ is constructed via a finite combination of intersections and complements of analytic subsets of $X$ so, if we manage to construct a  $1+\vare_3$-biLipschitz map $u$ between $B^{\tilde{\sfd}}_{\delta_1}\cap  U^k_{\vare_1, \delta_1}$ and   a subset $E\subset \R^k$, $E$ will be automatically expressible as a finite combination of intersections and complements of analytic subsets of $\R^k$. Moreover the measure estimate \eqref{eq:measest} readily follows by  Lemma \ref{lem:Mk} and the definition of $ U^{k}_{\vare_1}$ in \eqref{eq:defUke1} . Therefore it is enough to prove  \eqref{eq:rectUked}, i.e.  we have to construct such a map $u$.

We start by fixing  a $\varepsilon_4\in (0,1)$  such that  
\begin{equation}\label{eq:defeps4}
\max \left\{1+\vare_4\, , \, \frac{1}{1-\vare_4} \right\} \leq 1+\vare_3.
\end{equation}
Combining the Almost Splitting Theorem \ref{thm:AlmSplit} with Lemma \ref{lem:delta} and the very definition \eqref {eq:defUke1} of $U^k_{\vare_1, \delta_1}$, we infer that for every $\vare_4>0$ there exist $\vare_1,\delta_1>0$ small enough such that if $(X,\tilde{\sfd}, \tilde{\mm}, \bar{x})$ satisfies \eqref{eq:B10e2}, where $\epsilon_2$ is from Lemma \ref{lem:Mk}, then for every $x \in  U^k_{\vare_1, \delta_1}$ and for every $0<r\leq 2 \delta_1$ it holds
\begin{equation}\label{eq:DXRk}
\DC \left( \left(X, r^{-1} \tilde{\sfd}, \tilde{\mm}^x_r, x \right), \left( \R^k ,\sfd_{\R^k},\LL_k, 0^k)  \right) \right) \leq  \vare_4 \quad . 
\end{equation}
Moreover the  GH $\vare_4$-quasi isometry map $u_{x,r}:B^{r^{-1}\tilde{\sfd}}_{1}(x) \to \R^k$ is  given by 
$$u^i_{x,r}(\cdot):=r^{-1}\left( \tilde{\sfd}(p_i,\cdot)- \tilde{\sfd}(p_i,x)\right), \; i=1,\ldots,k.$$
This means that for every $0<r \leq 2 \delta_1$ and every $y_1,y_2 \in B^{r^{-1}\tilde{\sfd}}_{1}(x)$  it holds
$$ \left| \sqrt{\sum_{i=1}^k  \left(u^i_{x,r} (y_1)- u^i_{x,r}(y_2) \right)^2} -  r^{-1} \tilde{\sfd} (y_1,y_2)  \right|  \leq  \vare_4 \quad ,$$
which implies, after rescaling by $r$, that for every $0<r \leq 2 \delta_1$ and every $y_1,y_2 \in B^{\tilde{\sfd}}_{r}(x)$  it holds
\begin{equation} \label{eq:uixr}
\left| \sqrt{\sum_{i=1}^k  \left( \tilde{\sfd}(p_i,y_1)- \tilde{\sfd}(p_i,y_2) \right)^2} -   \tilde{\sfd} (y_1,y_2)  \right| = r  \left| \sqrt{\sum_{i=1}^k  \left(u^i_{x,r} (y_1)- u^i_{x,r}(y_2) \right)^2} -  r^{-1} \tilde{\sfd} (y_1,y_2)  \right|  \leq r \vare_4\quad.
\end{equation}
Hence, calling $u: B^{\tilde{\sfd}}_9(\bar{x})\to \R^k$ the map $u^i(\cdot):=\tilde{\sfd}(p_i,\cdot)-\tilde{\sfd}(p_i,\bar{x})$ with $i=1,\ldots,k$,  for every $x_1,x_2 \in U^k_{\vare_1, \delta_1}$ with $\tilde{\sfd}(x_1,x_2)\leq 2 \delta_1$ the above estimate \eqref{eq:uixr} ensures that
$$\left|  | u(x_1)-u(x_2)|_{\R^k} - \tilde{\sfd}(x_1,x_2) \right| \leq \vare_4 \tilde{\sfd}(x_1,x_2) \quad, $$
which gives
\begin{equation}\label{eq:uLip}
(1-\vare_4) \,  \tilde{\sfd}(x_1,x_2) \leq  | u(x_1)-u(x_2)|_{\R^k} \leq  (1+\vare_4) \, \tilde{\sfd}(x_1,x_2) \quad.
\end{equation}
This is to say the map $u:B^{\tilde{\sfd}}_{\delta_1}\cap  U^k_{\vare_1, \delta_1}$ is $(1+\vare_3)$-biLipschitz to its image in $\R^k$, in virtue of \eqref{eq:defeps4}. 
\end{proof}

To finish the rectifiability let $\{x_\alpha\} \subset A_k$ be a countable dense subset. Notice that such  a subset exists since $X$ is locally compact.  Let us denote the sets
\begin{align}\label{eq:defRke}
R_{k,\vare}:=\bigcup_{\alpha,j\in \N} U^k_\vare(x_\alpha,j^{-1})\quad,
\end{align}
where   $U^k_\vare(x_\alpha,j^{-1})$ was defined in \eqref{eq:defUke1}. 
It is clear from Theorem \ref{thm:rectUked} that for $\epsilon(N,K)$ sufficiently small the set $R_{k,\vare}$ is rectifiable, since it is a countable union of such sets.  We only need to  see that $\mm(A_k\setminus R_{k,\vare})=0$ via a standard measure-density argument.  

\begin{theorem}[$k$-rectifiability of $A_k$]\label{thm:Akrect}
Let $(X,\sfd,\mm)$ be an $\RCD^*(K,N)$-space and let $A_k$ be defined in \eqref{eq:defAk}. Then there exists $\bar{\vare}=\bar{\vare}(K,N)>0$ such that, for every $1\leq k \leq N$ and $0<\vare\leq \bar{\vare}$, one has that 
$$\mm(A_k\setminus R_{k,\vare})=0 \quad,$$
where $R_{k,\vare}$ is the $k$-rectifiable set  defined in   \eqref{eq:defRke}.
\end{theorem}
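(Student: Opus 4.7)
The plan is to argue by contradiction using a Lebesgue differentiation argument, with Theorem \ref{thm:rectUked} (applied at suitable pairs $(x_\alpha, j^{-1})$) as the key ingredient. Assume $\mm(A_k \setminus R_{k,\vare}) > 0$; since $(X,\sfd,\mm)$ is locally doubling, Lebesgue differentiation yields a point $\bar y \in A_k \setminus R_{k,\vare}$ which is simultaneously a density-one point of $A_k$ and of $A_k \setminus R_{k,\vare}$. Because $\bar y \in A_k$, Theorem \ref{lem:ConstrU} provides, for every $\vare_2 > 0$, arbitrarily small scales $r > 0$ at which the rescaling $(X, r^{-1}\sfd, \mm^{\bar y}_r, \bar y)$ together with auxiliary points $\{p_i, q_i, p_i + p_j\}$ satisfies the excess estimate \eqref{eq:B10e2} with constant $\vare_2$.

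The crucial second step is to replace the pair $(\bar y, r)$ --- which does not index $R_{k,\vare}$ --- by an admissible pair $(x_\alpha, j^{-1})$. Pick $j \in \N$ with $|j^{-1} - r| \leq \eta r$ and $x_\alpha$ from the countable dense subset of $A_k$ with $\sfd(x_\alpha, \bar y) \leq \eta r$, for a parameter $\eta > 0$ depending only on $K$, $N$ and $\vare$. Then $(X, j\sfd, \mm^{x_\alpha}_{j^{-1}}, x_\alpha)$ is p-mGH $o(1)$-close, as $\eta \to 0$, to $(X, r^{-1}\sfd, \mm^{\bar y}_r, \bar y)$; the points $\{p_i, q_i, p_i+p_j\}$ remain in the required annular regions; and the integrand in \eqref{eq:B10e2} is the squared weak upper differential of uniformly $1$-Lipschitz functions, invariant under rescaling of the measure, so the only effect on the estimate is a symmetric-difference perturbation of the domain of integration, controlled via doubling. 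Starting from \eqref{eq:B10e2} at $(\bar y, r)$ with constant $\vare_2/2$ and choosing $\eta$ small, we deduce \eqref{eq:B10e2} at $(x_\alpha, j^{-1})$ with constant $\vare_2$, which brings us into the scope of Theorem \ref{thm:rectUked}.

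Applying the measure estimate \eqref{eq:measest} at $(x_\alpha, j^{-1})$ and unwinding the normalization \eqref{eq:normalization} using the locally uniform doubling of $\mm$ gives
$$\mm\big( (B_{9/j}(x_\alpha) \cap A_k) \setminus R_{k,\vare} \big) \leq C(K,N)\, \vare_1\, \mm(B_{9/j}(x_\alpha)),$$
since $U^k_{\vare_1}(x_\alpha, j^{-1}) \subset R_{k,\vare}$ by construction. As $B_{9/j}(x_\alpha) \supset B_\rho(\bar y)$ for some $\rho \asymp r$ and the two balls have doubling-comparable measures, this forces
$$\mm(B_\rho(\bar y) \cap (A_k \setminus R_{k,\vare})) \leq C'(K,N)\, \vare_1\, \mm(B_\rho(\bar y)) ;$$
fixing $\vare_1$ sufficiently small (say $\vare_1 < 1/(4C')$), this contradicts the density-one property of $A_k \setminus R_{k,\vare}$ at $\bar y$ as soon as $r$ is taken small enough. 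The main obstacle is the perturbation step: namely, the joint stability of the excess estimate \eqref{eq:B10e2} under a small change of base point and scale, together with the requirement that the configuration $\{p_i, q_i, p_i + p_j\}$ remain a valid Gromov--Hausdorff near-lattice for the rescaling at $(x_\alpha, j^{-1})$. Once this stability is established, the remainder of the proof is a clean density-point computation driven by the absolute measure estimate \eqref{eq:measest}.
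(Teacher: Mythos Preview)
Your proposal is correct and follows essentially the same approach as the paper: argue by contradiction, pick a density point $\bar y\in A_k\setminus R_{k,\vare}$, use Theorem~\ref{lem:ConstrU} to obtain the excess estimate \eqref{eq:B10e2} at a small scale, pass to an admissible pair $(x_\alpha,j^{-1})$, invoke the measure estimate \eqref{eq:measest} from Theorem~\ref{thm:rectUked}, and contradict density. The only cosmetic difference is that the paper takes the scale $r=j^{-1}$ directly (asserting \eqref{eq:B10e2} for all sufficiently small $r$) and then compares $U^k_\vare(\bar x,j^{-1})$ with $U^k_\vare(x_\alpha,j^{-1})$, whereas you perturb both base point and scale and re-establish \eqref{eq:B10e2} at $(x_\alpha,j^{-1})$ before applying Theorem~\ref{thm:rectUked}; both routes hinge on the same stability of \eqref{eq:B10e2} under small perturbations, which you correctly identify as the main technical point.
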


\begin{proof}
If by contradiction $\mm(A_k\setminus R_{k,\vare})>0$ then there exists an $\mm$-density point $\bar{x}\in A_k$ of   $A_k\setminus R_{k,\vare}$, i.e.
\begin{equation} \label{eq:mdens}
\lim_{r\downarrow 1} \frac{\mm\left((A_k\setminus R_{k,\vare}) \cap B^{\sfd}_r(\bar{x})  \right)}{\mm(B^{\sfd}_r(\bar{x}))}=1 \quad. 
\end{equation}
Note that by applying  Theorem  \ref{lem:ConstrU},   for any $\vare_2>0$ and for some $0<r\leq \bar{r}(\bar{x},\vare_2)$ sufficiently small we have  that \eqref{eq:B10e2} holds. Therefore, by taking $\vare_2>0$ sufficiently small, for every $j\geq \bar{j}(\bar{x},\vare_2, \vare)$ large enough, there exists $x_\alpha$  sufficiently close to $\bar x$ such that 
$$\mm^{\bar{x}}_{j^{-1}}\left(B^{\sfd}_{j^{-1}}(\bar{x})\cap \Big(U^k_{\vare}(\bar{x},j^{-1})\setminus U^k_{\vare}(x_{\alpha},j^{-1}) \Big)\right)\leq \vare \quad,$$
and, recalling the measure estimate \eqref{eq:measest}, we infer 
$$
\mm^{\bar{x}}_{j^{-1}}\left(\big(B^{\sfd}_{j^{-1}}(\bar{x})\cap  A_k\big) \setminus U^k_{\vare}(x_{\alpha},j^{-1}) \right)\leq 2 \vare.
$$
But now, from the very definition \eqref{eq:normalization}  of the rescaled measure $\mm^{\bar{x}}_{j^{-1}}$ and from  the measure  doubling property ensured  by the $\RCD^*(K,N)$ condition, we have that
$$\frac{ \mm \left(\big(B^{\sfd}_{j^{-1}}(\bar{x})\cap  A_k\big) \setminus U^k_{\vare}(x_{\alpha},j^{-1}) \right)}{\mm\left(B_{j^{-1}}^{\sfd}(\bar{x})\right)}\leq C(K,N) \; \mm^{\bar{x}}_{j^{-1}}\left(\big(B^{\sfd}_{j^{-1}}(\bar{x})\cap  A_k\big) \setminus U^k_{\vare}(x_{\alpha},j^{-1}) \right) \leq 2 C(K,N)  \, \vare \leq \frac{1}{2}$$
for  $\vare \leq \frac{1}{4C(K,N)}$. Since by definition $U^k_{\vare}(x_{\alpha},j^{-1}) \subset R_{k, \vare}$, the last inequality clearly contradicts \eqref{eq:mdens} for $j$ large enough. 
\end{proof}

\subsection{$\mm$-a.e. uniqueness of tangent cones}\label{SS:UniqTC}

The $k$-rectifiability of $A_k$ establishes immediately that for $\mm$-a.e. $x \in A_k$ the tangent cone is unique and isomorphic to the euclidean space $\R^k$; the $\mm$-a.e. uniqueness of tangent cones of $\RCD^*(K,N)$-spaces expressed in Theorem \ref{thm:UniqTC} will then follow from Lemma \ref{lem:AkEq}. For completeness sake we include the argument:
  
 \begin{theorem}
 Let $(X,\sfd,\mm)$ be an $\RCD^*(K,N)$-space and let $A_k$ be defined in \eqref{eq:defAk}, for  $1\leq k \leq N$. Then, for $\mm$-a.e. $x \in A_k$ the tangent cone is unique and $k$-dimensional euclidean, i.e.
 \begin{equation}
 \Tan(X, \sfd, \mm, x) = \left\{ \left(\R^k, \sfd_E, \LL_k, 0^k\right) \right\} \quad.
 \end{equation}  
 \end{theorem}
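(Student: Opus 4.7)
The plan is to derive the uniqueness of tangents as an $\mm$-a.e.\ corollary of the $k$-rectifiability of $A_k$ already established in Theorem \ref{thm:Akrect}, via a density-point blow-up argument. First I would apply Theorem \ref{thm:Akrect} along a sequence $\varepsilon_n \downarrow 0$ (with $\varepsilon_n \leq \bar\varepsilon(K,N)$) to produce, for each $n$, a $k$-rectifiable subset $R_{k,\varepsilon_n} \subseteq A_k$ with $\mm(A_k \setminus R_{k,\varepsilon_n}) = 0$, written as a countable union of pieces $U_{n,\alpha}$ each $(1+\varepsilon_n)$-biLipschitz via some map $u_{n,\alpha}$ to a measurable subset of $\R^k$. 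Since $\RCD^*(K,N)$-spaces are locally doubling, the Lebesgue differentiation theorem gives that for $\mm$-a.e.\ $x \in A_k$ and for every $n$ simultaneously, $x$ belongs to some piece $U_{n,\alpha(n)}$ and is an $\mm$-density point of it.

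Next, fix such an $x$ and let $(Y,\sfd_Y,\mm_Y,y) \in \Tan(X,\sfd,\mm,x)$ be arbitrary, realized as the p-mGH limit of $(X, r_j^{-1}\sfd, \mm^x_{r_j}, x)$ along some $r_j \downarrow 0$. The density point property ensures that the mass of $\mm^x_{r_j}$ located outside $U_{n,\alpha(n)}$ inside any fixed rescaled ball is infinitesimal as $j \to \infty$, so $(Y,\sfd_Y,\mm_Y,y)$ is simultaneously the p-mGH limit of the restricted pieces $(U_{n,\alpha(n)}, r_j^{-1}\sfd, \mm^x_{r_j}\llcorner U_{n,\alpha(n)}, x)$. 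Pushing these forward via $u_{n,\alpha(n)}$ and using that the pointed blow-up of $\R^k$ at any point is again $(\R^k, \sfd_E, 0^k)$, the $(1+\varepsilon_n)$-biLipschitz control passes to the limit. Hence $(Y,\sfd_Y,y)$ is $(1+\varepsilon_n)$-biLipschitz equivalent to $(\R^k, \sfd_E, 0^k)$; sending $n \to \infty$ forces it to be isometric to $(\R^k, \sfd_E, 0^k)$.

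To identify the limit measure, observe that by \eqref{eq:tancd} the space $(\R^k, \sfd_E, \mm_Y)$ is $\RCD^*(0,N)$. Iterating Theorem \ref{thm:splitting} along the $k$ orthogonal directions of $\R^k$ (each one a line, and available thanks to $k \leq N$), $\mm_Y$ decomposes as $\mm^{(k)} \times \LL_k$ for some measure $\mm^{(k)}$ on the residual factor; but since the ambient space is already $\R^k$, this residual factor must be a singleton, so $\mm^{(k)} = c\,\delta$ and $\mm_Y = c\LL_k$ for some $c > 0$. The constant $c$ is then pinned down by the normalization $\int_{B_1(y)}(1 - \sfd_Y(\cdot,y))\, d\mm_Y = 1$ that is preserved under p-mGH convergence (see \eqref{eq:normalization}), giving exactly the normalization of $\LL_k$ used throughout the paper. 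Thus every element of $\Tan(X,\sfd,\mm,x)$ coincides with $(\R^k,\sfd_E,\LL_k,0^k)$, as required.

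The main obstacle I anticipate is the rigorous propagation of the $(1+\varepsilon_n)$-biLipschitz equivalence to the limit $(Y,y)$: one has to combine the density point condition in the doubling space with the biLipschitz estimates on $u_{n,\alpha(n)}$ and the weak convergence of the rescaled measures, presumably via a diagonal subsequence, in order to conclude that $(Y,y)$ is biLipschitz equivalent to $(\R^k, 0^k)$ with the sharp constant $1+\varepsilon_n$ uniformly in $j$. Once this metric identification is in hand, the measure identification by iterated splitting is routine.
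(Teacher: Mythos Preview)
Your approach is genuinely different from the paper's and contains a real gap at the step where you claim that $(Y,\sfd_Y,y)$ is $(1+\varepsilon_n)$-biLipschitz equivalent to $(\R^k,\sfd_E,0^k)$. The chart $u_{n,\alpha(n)}$ is a $(1+\varepsilon_n)$-biLipschitz map from $U_{n,\alpha(n)}$ onto a \emph{subset} $V:=u_{n,\alpha(n)}(U_{n,\alpha(n)})\subset\R^k$, so what you can compare the blow-up of $U_{n,\alpha(n)}$ at $x$ with is the blow-up of $V$ at $u(x)$, not the blow-up of $\R^k$. Your sentence ``the pointed blow-up of $\R^k$ at any point is again $(\R^k,\sfd_E,0^k)$'' silently replaces $V$ by $\R^k$. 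For the blow-up of $V$ at $u(x)$ to be all of $\R^k$ you would need $u(x)$ to be an $\LL_k$-density point of $V$; but nothing here relates $u_\sharp(\mm\llcorner U)$ to $\LL_k$, so this is not available. The density-point hypothesis on $x$ in $(X,\mm)$, combined with doubling, does give that $U_{n,\alpha(n)}$ is GH-dense near $x$ in $X$---so the metric blow-up of $X$ at $x$ agrees with that of $U_{n,\alpha(n)}$---but it says nothing about $V$ being dense in $\R^k$ near $u(x)$. Consequently your argument only yields that every tangent $Y$ is $(1+\varepsilon_n)$-biLipschitz to some closed subset $Z_n\subset\R^k$, hence (letting $n\to\infty$) isometric to a closed convex subset of $\R^k$; this does not rule out, say, a half-space, and the definition of $A_k$ alone does not exclude such tangents.

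The paper sidesteps this difficulty by not relying on the final biLipschitz output of Theorem \ref{thm:Akrect}. Instead it returns to the intermediate, strictly stronger statement \eqref{eq:DXRk} obtained inside the proof of Theorem \ref{thm:rectUked}: on the good set $U^k_{\vare_1,\delta_1}$ one has, for \emph{every} small scale $r$, a direct $\vare_3$-GH closeness of the rescaled ball to $\R^k$. This immediately forces all tangents at points of $U^k_{\vare_1,\delta_1}$ to lie in a small $\DC$-neighbourhood of $\R^k$, and a density-point contradiction with the set $S_n$ of ``bad'' points finishes the proof. In other words, the biLipschitz charts alone are too weak; what is actually used is the uniform-in-scale GH control that produced them.
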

 
 \begin{proof}
 Let  $S_n\subset X$ be defined by
 $$S_n:=\left\{x \in X \, : \,  \exists (Y,\sfd_Y,\mm_Y,y)\in \Tan(X,\sfd,\mm,x) \text{ with } \DC\left((Y,\sfd_Y,\mm_Y,y),(\R^k,\sfd_{\R^k},\LL_k,0^k) \right)>\frac{1}{n}\right\} \quad. $$
 Observe that $S_n\subset X$ is analytic since it can be written as projection of a Borel subset of $X\times \MM_{C(.)}$:
 $$S_n=\Pi_1\left[ {\mathcal A} \cap \left( \bigcup_{x \in X} \{x\} \times \left( \MM_{C(.)} \setminus \bar{B}_{1/n}\big((\R^k,\sfd_{\R^k},\LL_k,0^k)\big) \right)  \right) \right]\quad,$$
 where ${\mathcal A}\subset X\times \MM_{C(.)}$ is the Borel subset (see the proof of Lemma \ref{lem:AkMeas}) defined in  \eqref{eq:defcalA}.
 In order to get the thesis it is clearly enough to prove that $\mm(A_k\cap S_n)=0$, for every $n \in \N$.  If by contradiction for some $n\in \N$ one has $\mm(A_k\cap S_n)>0$,  then there exists an $\mm$-density point $\bar{x}\in A_k$ of $A_k\cap S_n$, i.e.
 \begin{equation}\label{eq:mdensAkSn} 
 \lim_{r\downarrow 0} \frac{ \mm\left(A_k\cap S_n \cap B^{\sfd}_{r}(\bar{x})\right)}{\mm\left(B_r^{\sfd}(\bar{x})\right)}=1\quad .
 \end{equation}
 Repeating the first part of the proof of Theorem  \ref{thm:rectUked} we get that for  $\vare_3=\frac{1}{2n}$ and for every  $\vare_1,\delta_1>0$ (to be fixed later depending  just on $K$ and $N$) there exists $r_0=r_0(\bar{x},\vare_3,\vare_1,\delta_1)>0$ such that  the rescaled space $(X,r_0^{-1}\sfd, \mm^{\bar{x}}_{r_0}, \bar{x})$ has a  subset $U^k_{\vare_1,\delta_1}$ satisfying
 \begin{equation}\label{eq:Uke1d1e1}
 \mm^{\bar{x}}_{r_0} \left(B^{\sfd}_{r_0}(\bar{x})\setminus  U^k_{\vare_1,\delta_1}\right)\leq  2 \vare_1 \quad,
 \end{equation}
 and such that, for every $x \in B^{\sfd}_{r_0}(\bar{x})\cap  U^k_{\vare_1,\delta_1}$, one has
 \begin{equation}\label{eq:Xrr0Rk}
 \DC \left( \Big(X,(rr_0)^{-1}\sfd, \mm^{\bar{x}}_{rr_0}, \bar{x}\Big), \Big(\R^k,\sfd_{\R^k},\LL_k,0^k\Big)  \right)  \leq \frac{1}{2n} \quad \text{for every } \; 0<r\leq 2\delta_1\quad .
 \end{equation}
 The last property \eqref{eq:Xrr0Rk} implies that,   for every $x \in B^{\sfd}_{r_0}(\bar{x})\cap  U^k_{\vare_1,\delta_1}$, one has $\Tan(X, \sfd, \mm, x) \subset B_{\frac{1}{2n}} \Big((\R^k,\sfd_{\R^k},\LL_k,0^k)\Big)$ so, by the very definition of $S_n$, that $S_n\cap B^{\sfd}_{r_0}(\bar{x})\cap  U^k_{\vare_1,\delta_1}=\emptyset$ or, in other terms, $A_k\cap S_n \cap B^{\sfd}_{r_0}(\bar{x})\subset  B^{\sfd}_{r_0}(\bar{x})\setminus  U^k_{\vare_1,\delta_1}$. But now, from the very definition \eqref{eq:normalization}  of the rescaled measure $\mm^{\bar{x}}_{r_0}$ and from  the measure  doubling property ensured  by the $\RCD^*(K,N)$ condition, we have that
$$\frac{ \mm\left(A_k\cap S_n \cap B^{\sfd}_{r_0}(\bar{x})\right)}{\mm\left(B_{r_0}^{\sfd}(\bar{x})\right)}\leq C(K,N) \; \mm^{\bar{x}}_{r_0} \left(A_k\cap S_n \cap B^{\sfd}_{r_0}(\bar{x})\right) \leq C(K,N) \;  \mm^{\bar{x}}_{r_0} \left( B^{\sfd}_{r_0}(\bar{x})\setminus  U^k_{\vare_1,\delta_1} \right)  \leq 2 C(K,N) \, \vare_1 \leq \frac{1}{2}$$
for  $\vare_1\leq \frac{1}{2C(K,N)}$, where we used \eqref{eq:Uke1d1e1}. For $r_0>0$ small enough this clearly contradicts \eqref{eq:mdensAkSn} .
 \end{proof}
 
 \section*{Appendix: an $\varepsilon$-regularity result}
 In this appendix we isolate the following $\varepsilon$-regularity result which was proved along the lines of the rectifiability theorem, since it may be useful for future developments of the theory of $\RCD^{*}(K,N)$-spaces. For the reader's convenience we give a self-contained argument.
 
 \begin{theorem}\label{thm:epsreg1}
  For every  $N\in (1,\infty)$ and $\varepsilon\in (0,1)$ there exists $\delta=\delta(\varepsilon, N)>0$ with the following property. Let $(X,\sfd,\mm)$ be an $\RCD^{*}(-\delta, N)$-space and assume that  for some $\bar{x} \in X$ it holds
 $$\sfd_{mGH} \left( \Big(B_{1/\delta}(\bar{x}) , \sfd, \mm \Big), \Big(B_{1/\delta}(0),\sfd_{\R^k},\LL_k\Big)  \right)  \leq \delta,  $$
 then there exists a Borel subset $U\subset B_{1}(\bar{x})$ such that  $\mm(B_{1}(\bar{x})\setminus U))\leq \varepsilon$ and $U$ is $(1+\varepsilon)$-biLipschitz to a subset of $\R^{k}$. 
 \end{theorem}
 
 \begin{proof}
 Let $\vare_{1}>0$ small to be fixed later, depending just on $\vare$ and $N$ via the Almost Splitting Theorem \ref{thm:AlmSplit}.
 Following verbatim the proof of Theorem \ref{lem:ConstrU}  we obtain that, if  $\delta>0$ is chosen small enough,  there exist points $p_{i}, q_{i} \in \partial B_{\frac{1}{2\delta}}(\bar{x})$ for $i=1,\ldots, k$,  and $\{p_{i} + p_{j}\}_{1\leq i<j \leq k} \subset  B_{\frac{3}{4\delta}}(\bar{x}) \setminus  B_{\frac{1}{2\delta}}(\bar{x})$ such that
 \begin{equation}
\sum_{i=1}^{k} \sup_{B_{\frac{1}{\delta^{1/3}}}(\bar{x})} e_{p_{i},q_{i}}+  \fint _{B_{\frac{1}{\delta^{1/3}}}(\bar{x})} \left( \sum_{i=1}^k  |D e_{p_i,q_i}|^2 + \sum_{1\leq i< j\leq k}   \left|D \left( \frac{\sfd_{p_i}+ \sfd_{p_j}} {\sqrt 2} - \sfd_{p_i+p_j}\right) \right|^2 \right) \, \d \mm \leq \vare_{1}^{2},
 \end{equation}
where $\sfd_{p_i}(\cdot):=\sfd(p_i, \cdot)$ and the excess $e_{p_i,q_i}$ is defined by $e_{p_i,q_i}(\cdot):=\sfd_{p_i}(\cdot)+\sfd_{q_i}(\cdot)-\sfd(p_i,q_i)$.   
Consider the maximal function $F:B_1(\bar{x})\to \R^+$ defined by 
\begin{equation}\label{eq:defMkEPSREG}
F(x):= \sup_{0<r<\frac{1}{2\delta^{1/3}}}  \fint _{B_{r}(x)} \left( \sum_{i=1}^k  |D e_{p_i,q_i}|^2 + \sum_{1\leq i< j\leq k}   \left|D \left( \frac{\sfd_{p_i}+ \sfd_{p_j}} {\sqrt 2} - \sfd_{p_i+p_j}\right) \right|^2 \right) \, \d \mm.
\end{equation}
From the continuity of the maximal function operator from $L^{1}$ to $L^{1}$-weak  (see the proof of Lemma \ref{lem:Mk} for the details), we get that
\begin{equation} \label{eq:Mke1EPSREG}
 \mm(\{x \in  B_1(\bar{x}) : F(x)>\vare_{1} \})\leq C(N) \, \frac{\vare_{1}^{2}}{\vare_{1}} = C(N) \, \vare_{1}.
\end{equation}
Let 
$$
U:=\{x \in  B_1(\bar{x}) : F(x)\leq \vare_{1} \},
$$
and notice that, if we choose $\vare_{1}\in (0, \vare]$, we get the desired measure estimate
\begin{equation}\label{eq:estmBminusUEPSREG}
\mm(B_{1}(\bar{x})\setminus U) \leq \vare.
\end{equation}
 Fix now $\varepsilon_2=\vare_{2}(\vare)\in (0,1)$  such that  
\begin{equation}\label{eq:defepsEPSREG}
\max \left\{1+\vare_2\, , \, \frac{1}{1-\vare_2} \right\} \leq 1+\vare.
\end{equation}
By  the Almost Splitting Theorem \ref{thm:AlmSplit}, if $\vare_{1}=\vare_{1}(\vare_{2},N)=\vare_{1}(\vare,N)>0$  and $\delta>0$ are chosen small enough, then  for every $x \in  U$ and $r \in (0,1]$  it holds
\begin{equation}\label{eq:DXRkEPSREG}
\DC \left( \left(X, r^{-1} \sfd, \mm^x_r, x \right), \left( \R^k ,\sfd_{\R^k},\LL_k, 0^k)  \right) \right) \leq  \vare_2 \quad . 
\end{equation}
Moreover the  GH $\vare_2$-quasi isometry map $u_{x,r}:B^{r^{-1}\sfd}_{1}(x) \to \R^k$ is  given by 
$$u^i_{x,r}(\cdot):=r^{-1}\left( \sfd(p_i,\cdot)- \sfd(p_i,x)\right), \; i=1,\ldots,k.$$
This means that for every $r \in (0,1)$ and every $y_1,y_2 \in B^{r^{-1}\sfd}_{1}(x)$  it holds
$$ \left| \sqrt{\sum_{i=1}^k  \left(u^i_{x,r} (y_1)- u^i_{x,r}(y_2) \right)^2} -  r^{-1} \sfd (y_1,y_2)  \right|  \leq  \vare_2 \quad ,$$
which implies, after rescaling by $r$, that for every $r \in (0,1]$ every $y_1,y_2 \in B_{r}(x)$  it holds
\begin{equation} \label{eq:uixrEPSREG}
\left| \sqrt{\sum_{i=1}^k  \left( \sfd(p_i,y_1)- \sfd(p_i,y_2) \right)^2} -   \sfd (y_1,y_2)  \right| = r  \left| \sqrt{\sum_{i=1}^k  \left(u^i_{x,r} (y_1)- u^i_{x,r}(y_2) \right)^2} -  r^{-1} \sfd (y_1,y_2)  \right|  \leq r \vare_2\quad.
\end{equation}
Hence, calling $u: B_1(\bar{x})\to \R^k$ the map $u^i(\cdot):=\sfd(p_i,\cdot)-\sfd(p_i,\bar{x})$ with $i=1,\ldots,k$,  for every $x_1,x_2 \in U$, the above estimate \eqref{eq:uixrEPSREG} ensures that
$$\left|  | u(x_1)-u(x_2)|_{\R^k} - \sfd(x_1,x_2) \right| \leq \vare_2 \sfd(x_1,x_2) \quad, $$
which gives
\begin{equation}\label{eq:uLipEPSREG}
(1-\vare_2) \,  \sfd(x_1,x_2) \leq  | u(x_1)-u(x_2)|_{\R^k} \leq  (1+\vare_2) \, \sfd(x_1,x_2) \quad.
\end{equation}
This is to say the map $u: U\to \R^{k}$ is $(1+\vare)$-biLipschitz to its image in $\R^k$, in virtue of \eqref{eq:defepsEPSREG}; recalling also \eqref{eq:estmBminusUEPSREG} the proof is complete.
 \end{proof}
 By a simple rescaling argument we get the next variant of the $\vare$-regularity Theorem \ref{thm:epsreg1}.
\begin{theorem}\label{thm:epsreg2}
For every  $N\in (1,\infty)$ and $\varepsilon\in (0,1)$ there exists $\delta=\delta(\varepsilon, N)>0$ with the following property. Let $(X,\sfd,\mm)$ be an $\RCD^{*}(-1, N)$-space and assume that  for some $\bar{x} \in X$ it holds
 $$\sfd_{mGH} \left( \Big(B_1(\bar{x}) , \sfd, \mm \Big), \Big(B_1(0),\sfd_{\R^k},\LL_k\Big)  \right)  \leq \delta,  $$
 then there exists a Borel subset $U\subset B_{\delta}(\bar{x})$ such that  $\mm(B_{\delta}(\bar{x})\setminus U)\leq \varepsilon \, \mm(B_{\delta}(\bar{x}))$ and $U$ is $(1+\varepsilon)$-biLipschitz to a subset of $\R^{k}$. 
\end{theorem}

 \def\cprime{$'$}

\

\end{document}